\numberwithin{equation}{section}
\newtheorem{theorem}{Theorem}[section]
\newtheorem{lemma}[theorem]{Lemma}
\newtheorem{corollary}[theorem]{Corollary}
\newtheorem{conjecture}[theorem]{Conjecture}
\theoremstyle{definition}
\newtheorem{definition}[theorem]{Definition}
\newtheorem{def-prop}[theorem]{Definition-Proposition}
\newtheorem{remark}[theorem]{Remark}
\newtheorem{example}[theorem]{Example}
\newtheorem{problem}[theorem]{Problem}
\DeclareMathOperator{\reg}{reg}
\DeclareMathOperator{\height}{ht}
\DeclareMathOperator{\link}{link}
\DeclareMathOperator{\del}{del}
\DeclareMathOperator{\pd}{pd}
\newcommand{\ZZ}{{\mathbb Z}}
\newcommand{\NN}{{\mathbb N}}
\def\mm{{\frak m}}
\def\E{{\mathcal E}}
\def\P{{\mathcal P}}
\def\1{{\bf 1}}
\def\0{{\bf 0}}
\begin{document}


\title{Regularity of squarefree monomial ideals}

\author{Huy T\`ai H\`a}
\address{Tulane University \\ Department of Mathematics \\
6823 St. Charles Ave. \\ New Orleans, LA 70118, USA}
\email{tha@tulane.edu}
\urladdr{http://www.math.tulane.edu/$\sim$tai/}


\begin{abstract}
We survey a number of recent studies of the Castelnuovo-Mumford regularity of squarefree monomial ideals. Our focus is on bounds and exact values for the regularity in terms of combinatorial data from associated simplicial complexes and/or hypergraphs.
\end{abstract}

\maketitle

\begin{quotation}
 \begin{center}
  \emph{Dedicated to Tony Geramita, a great teacher, colleague and friend.}
 \end{center}
\end{quotation}


\section{Introduction} \label{sec.intro}

Castelnuovo-Mumford regularity (or simply regularity) is an important invariant in commutative algebra and algebraic geometry that governs the computational complexity of ideals, modules and sheaves. Computing or finding bounds for the regularity is a difficult problem. Many simply stated questions and conjectures have been verified only in very special cases. For instance, the Eisenbud-Goto conjecture, which states that the regularity of a projective variety is bounded by the difference between its degree and codimension, has been proved only for arithmetic Cohen-Macaulay varieties, curves and surfaces.

The class of squarefree monomial ideals is a classical object in commutative algebra, with strong connections to topology and combinatorics, which continues to inspire much of current research. During the last few decades, advances in computer technology and speed of computation have drawn many researchers' attention toward problems and questions involving this class of ideals. Investigating the regularity of squarefree monomial ideals has evolved to be a highly active research topic in combinatorial commutative algebra.

In this paper, we survey a number of recent studies on the regularity of squarefree monomial ideals. Even for this class of ideals, the list of results on regularity is too large to exhaust. Our focus will be on studies that find bounds and/or compute the regularity of squarefree monomial ideals in terms of combinatorial data from associated simplicial complexes and hypergraphs. Our aim is to provide readers with an adequate overall picture of the problems, results and techniques, and to demonstrate similarities and differences between these studies. At the same time, we hope to motivate interested researchers, and especially graduate students, to start working in this area. We shall showcase results in a timeline to exhibit (if possible) trends in developing new and/or more general results from previous ones.

The paper is structured as follows. In the next section we collect basic notation and terminology from commutative algebra and combinatorics that will be used in the paper. This section provides readers who are new to the research area the necessary background to start. More advanced readers can skip this section and go directly to Section \ref{sec.induction}. In Section \ref{sec.induction}, we outline a number of inductive results that appear to be common techniques in many recent works. Some of these results arise from simple short exact sequences in commutative algebra, while others may require heavier topological machineries to prove. Section \ref{sec.bound} is devoted to studies that provide bounds for the regularity of squarefree monomial ideals. In Section \ref{sec.compute}, we discuss studies that explicitly compute the regularity of squarefree monomial ideals, and identify classes of ideals with small regularity. In these sections (Sections \ref{sec.induction}, \ref{sec.bound} and \ref{sec.compute}), we do not generally prove results being surveyed. Rather, occasionally for initial theorems of various inductive techniques we shall sketch the proofs to demonstrate the methods used. The last section provides a number of open problems and questions that we would like to see answered.

\noindent{\bf Acknowledgement.} The author would like to thank an anonymous referee for a careful reading and many helpful comments. The author would also like to thank S.A. Seyed Fakhari for pointing out a mistake in our original definition of $H_E$ in Theorem \ref{thm.ind3}.


\section{Preliminaries} \label{sec.prel}

We begin by recalling familiar notation and terminology from commutative algebra and combinatorics. We follow standard texts \cite{Berge, HH, MS, Stanley} in these fields. 

We shall investigate the connection between commutative algebra and combinatorics via the notion of Stanley-Reisner ideals and edge ideals. To illustrate a general framework, we will not include studies that are based on constructions that do not necessarily give one-to-one correspondences between squarefree monomial ideals and combinatorial structures; for instance, \emph{path ideals} of directed trees (cf. \cite{BHO}). 

Throughout the paper, $K$ will denote any infinite field and $R = K[x_1, \dots, x_n]$ will be a polynomial ring over $K$. For obvious reasons, we shall identify the variables $x_1, \dots, x_n$ with the vertices of simplicial complexes and hypergraphs being discussed. By abusing notation, we also often identify a subset $V$ of the vertices $X = \{x_1, \dots, x_n\}$ with the squarefree monomial $x^V = \prod_{x \in V} x$ in the polynomial ring $R$. In examples, we use $a,b,c, \dots$ as variables instead of the $x_i$s.

\subsection{Simplicial complexes} A \emph{simplicial complex} $\Delta$ over the vertex set $X = \{x_1, \dots, x_n\}$ is a collection of subsets of $X$ such that if $F \in \Delta$ and $G \subseteq F$ then $G \in \Delta$. Elements of $\Delta$ are called \emph{faces}. Maximal faces (with respect to inclusion) are called \emph{facets}. For $F \in \Delta$, the \emph{dimension} of $F$ is defined to be $\dim F = |F|-1$. The \emph{dimension} of $\Delta$ is $\dim \Delta = \max \{\dim F ~|~ F \in \Delta\}$. The complex is called \emph{ pure} if all of its facets are of the same dimension.

Let $\Delta$ be a simplicial complex and let $Y \subseteq X$ be a subset of its vertices. The \emph{induced subcomplex} of $\Delta$ on $Y$, denoted by $\Delta[Y]$, is the simplicial complex with vertex set $Y$ and faces $\{F \in \Delta ~|~ F \subseteq Y\}$.

\begin{definition} Let $\Delta$ be a simplicial complex over the vertex set $X$, and let $\sigma \in \Delta$.
\begin{enumerate}
\item The \emph{deletion} of $\sigma$ in $\Delta$, denoted by $\del_\Delta(\sigma)$, is the simplicial complex obtained by removing $\sigma$ and all faces containing $\sigma$ from $\Delta$.
\item The \emph{link} of $\sigma$ in $\Delta$, denoted by $\link_\Delta(\sigma)$, is the simplicial complex whose faces are
$$\{ F \in \Delta ~|~ F \cap \sigma = \emptyset, \sigma \cup F \in \Delta\}.$$
\end{enumerate}
\end{definition}

\begin{definition} A simplicial complex $\Delta$ is recursively defined to be \emph{vertex decomposable} if either
\begin{enumerate}
\item $\Delta$ is a simplex; or
\item there is a vertex $v$ in $\Delta$ such that both $\link_\Delta(v)$ and $\del_\Delta(v)$ are vertex decomposable, and all facets of $\del_\Delta(v)$ are facets of $\Delta$.
\end{enumerate}
A vertex satisfying condition (2) is called a \emph{shedding vertex}, and the recursive choice of shedding vertices are called a \emph{ shedding order} of $\Delta$.
\end{definition}

Recall that a simplicial complex $\Delta$ is said to be \emph{shellable} if there exists a linear order of its facets $F_1, F_2, \dots, F_t$ such that for all $k = 2, \dots, t$, the subcomplex $\Big(\bigcup_{i=1}^{k-1} \overline{F_i}\Big) \bigcap \overline{F_k}$ is pure and of dimension $(\dim F_k - 1)$. Here $\overline{F}$ represents the simplex over the vertices of $F$. It is a celebrated fact that \emph{pure} shellable complexes give rise to \emph{Cohen-Macaulay Stanley-Reisner rings}. For more details on Cohen-Macaulay rings and modules, we refer the reader to \cite{BrHe}. The notion of Stanley-Reisner rings will be discussed later in the paper. Recall also that a ring or module is \emph{sequentially Cohen-Macaulay} if it has a filtration in which the factors are Cohen-Macaulay and their dimensions are increasing. This property corresponds to (\emph{nonpure}) shellability in general.

Vertex decomposability can be thought of as a combinatorial criterion for shellability and sequentially Cohen-Macaulayness. In particular, for a simplicial complex $\Delta$,
$$\Delta \textrm{ vertex decomposable } \Rightarrow \Delta \textrm{ shellable } \Rightarrow \Delta \textrm{ sequentially Cohen-Macaulay}.$$

\begin{figure}[h!]
\centering
\includegraphics[height=1.2in]{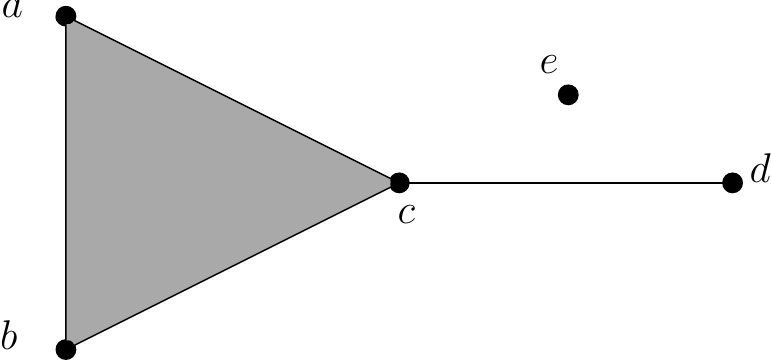}
\centering\caption{A vertex decomposable simplicial complex.}\label{fig:SimplicialComplex}
\end{figure}

\begin{example} The simplicial complex $\Delta$ in Figure \ref{fig:SimplicialComplex} is a nonpure simplicial complex of dimension 2. It has 3 facets; the facet $\{a,b,c\}$ is of dimension 2, the facet $\{c,d\}$ is of dimension 1, and the facet $\{e\}$ is of dimension 0. The complex $\Delta$ is vertex decomposable with $\{e, d\}$ as a shedding order.
\end{example}


\subsection{Hypergraphs} A hypergraph $H = (X,\E)$ over the vertex set $X = \{x_1, \dots, x_n\}$ consists of $X$ and a collection $\E$ of nonempty subsets of $X$; these subsets are called the \emph{edges} of $H$. A hypergraph $H$ is \emph{simple} if there is no nontrivial containment between any pair of its edges. Simple hypergraphs are also referred to as \emph{clutters} or \emph{Sperner systems}. All hypergraphs considered in this paper will be simple.

When working with a hypergraph $H$, we shall use $X(H)$ and $\E(H)$ to denote its vertex and edge sets, respectively. We shall denote by $\textrm{is}(H)$ the set of (\emph{isolated}) vertices that do not belong to any edge in $H$, and let $H^{\textrm{red}}$ be the hypergraph obtained by removing vertices in $\textrm{is}(H)$ from the vertex set of $H$. An edge $\{v\}$ consisting of a single vertex is often referred to as an \emph{isolated loop} (this is not to be confused with an isolated vertex).

Let $Y \subseteq X$ be a subset of the vertices in $H$. The \emph{induced subhypergraph} of $H$ on $Y$, denoted by $H[Y]$, is the hypergraph with vertex set $Y$ and edge set $\{E \in \E ~|~ E \subseteq Y\}$. The \emph{contraction} of $H$ to $Y$ is the hypergraph with vertex set $Y$ and edges being minimal nonempty elements of $\{ E \cap Y ~|~ E \in \E\}$.

\begin{definition} Let $H$ be a simple hypergraph.
\begin{enumerate}
\item A collection $C$ of edges in $H$ is called a \emph{matching} if the edges in $C$ are pairwise disjoint. The maximum size of a matching in $H$ is called its \emph{matching number}.
\item A collection $C$ of edges in $H$ is called an \emph{induced matching} if $C$ is a matching, and $C$ consists of all edges of the induced subhypergraph $H[\cup_{E \in C} E]$ of $H$. The maximum size of an induced matching in $H$ is called its \emph{ induced matching number}.
\end{enumerate}
\end{definition}

\begin{example} Figure \ref{fig:SimplicialComplex} can be viewed as a hypergraph over the vertex set $\{a,b,c,d,e\}$ with edges $\{a,b,c\}, \{c,d\}$ and $\{e\}$. The collection $\{\{a,b,c\}, \{e\}\}$ forms an induced matching in this hypergraph.
\end{example}

A \emph{graph} is a hypergraph in which all edges are of cardinality 2. The \emph{complement} of a graph $G$, denoted by $G^c$, is the graph with the same vertex set and an edge $E$ is in $G^c$ if and only if $E$ is not in $G$.

\begin{definition} A graph $G$ is called \emph{chordal} if it has no induced cycles of length $\ge 4$.
\end{definition}

A hypergraph $H$ is \emph{$d$-uniform} if all its edges have cardinality $d$. For an edge $E$ in $H$, let $N(E) = \{x \in X ~|~ \exists F \subseteq E \textrm{ s.t. } F \cup \{x\} \in \E\}$ be the set of \emph{neighbors} of $E$, and let $N[E] = N(E) \cup E$. 

\begin{definition} Let $H = (X,\E)$ be a simple hypergraph. Let $E$ be an edge and let $Y$ be a subset of the vertices in $H$.
\begin{enumerate}
\item Define $H \setminus E$ to be the hypergraph obtained by deleting $E$ from the edge set of $H$. This is often referred to as the \emph{deletion} of $E$ from $H$.
\item Define $H \setminus Y$ to be the hypergraph obtained from $H$ by deleting the vertices in $Y$ and all edges containing any of those vertices.
\item Define $H_E$ to be the contraction of $H \setminus N(E)$ to $X \setminus N[E]$.
\end{enumerate}
\end{definition}
Note that when $H$ is a graph and $E$ is an edge, then $H_E$ is just the induced subgraph of $H$ over the vertex set $X \setminus N[E]$.

\begin{definition} Let $H = (X,\E)$ be a simple hypergraph.
\begin{enumerate}
\item A collection of vertices $V$ in $H$ is called an \emph{independent set} if there is no edge $E \in \E$ such that $E \subseteq V$.
\item The \emph{independence complex} of $H$, denoted by $\Delta(H)$ is the simplicial complex whose faces are independent sets in $H$.
\end{enumerate}
\end{definition}

\begin{example} The simplicial complex $\Delta$ in Figure \ref{fig:SimplicialComplex} is the independence complex of the graph in Figure \ref{fig:Graph}.
\begin{figure}[h!]
\centering
\includegraphics[height=1.7in]{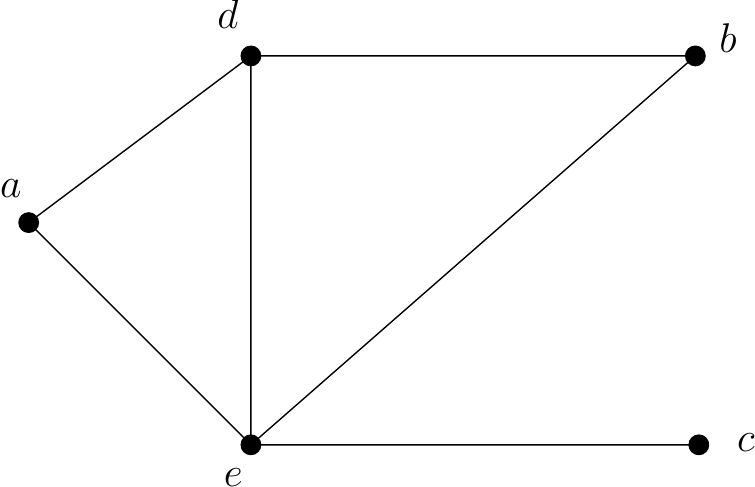}
\caption{A simple graph whose independence complex is in Figure \ref{fig:SimplicialComplex}.} \label{fig:Graph}
\end{figure}
\end{example}

\begin{remark} We call a hypergraph $H$ vertex decomposable (shellable, sequentially Cohen-Macaulay) if its independence complex $\Delta(H)$ is vertex decomposable (shellable, sequentially Cohen-Macaulay).
\end{remark}


\subsection{Stanley-Reisner ideals and edge ideals} The Stanley-Reisner ideal and edge ideal constructions are well-studied correspondences between commutative algebra and combinatorics. Those constructions arise by identifying minimal generators of a squarefree monomial ideal with the minimal nonfaces of a simplicial complex or the edges of a simple hypergraph.

Stanley-Reisner ideals were developed in the early 1980s (cf. \cite{Stanley}) and has led to many important homological results (cf. \cite{BrHe, Peeva}).

\begin{definition} \label{def.SRideal}
Let $\Delta$ be a simplicial complex on $X$. The \emph{Stanley-Reisner ideal} of $\Delta$ is defined to be
$$I_\Delta = \big( x^F ~|~ F \subseteq X \textrm{ is not a face of } \Delta \big).$$
\end{definition}

\begin{example} Let $\Delta$ be the simplicial complex in Figure \ref{fig:SimplicialComplex} and let $R = K[a,b,c,d,e]$. Then the minimal nonfaces of $\Delta$ are $\{a,d\}, \{a,e\}, \{b,d\}, \{b,e\}, \{c,e\}$ and $\{d,e\}$. Thus,
$$I_\Delta = (ad,ae,bd,be,ce,de).$$
\end{example}

\begin{example} The simplicial complex $\Delta$ in Figure \ref{fig:ProjPlane} represents a minimal triangulation of the real projective plane. Its Stanley-Reisner ideal is
$$I_\Delta = (abc,abe,acf,ade,adf,bcd,bdf,bef,cde,cef).$$
\begin{figure}[h!]
\centering
\includegraphics[height=2.3in]{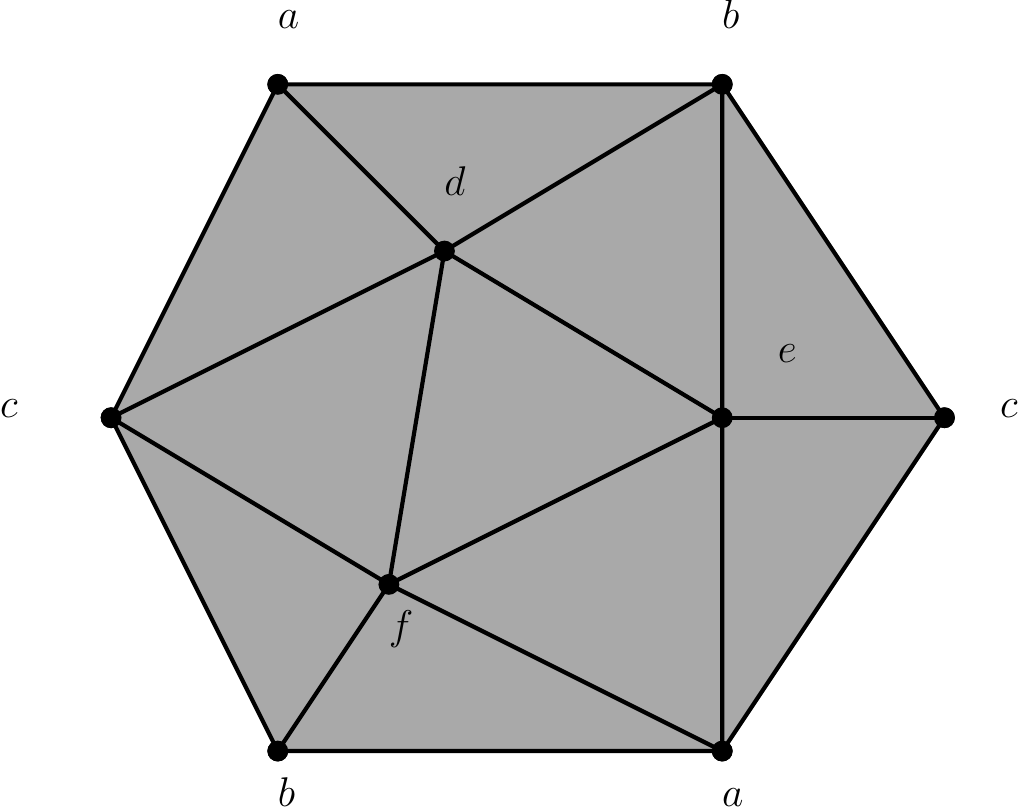}
\caption{A minimal triangulation of the real projective plane.} \label{fig:ProjPlane}
\end{figure}
\end{example}

The edge ideal construction was introduced in \cite{V} for graphs and later generalized to hypergraphs in \cite{HVT2008}. This construction is similar to that of facet ideals in \cite{Faridi}.

\begin{definition} \label{def.edgeideal}
Let $H$ be a simple hypergraph on $X$. The \emph{edge ideal} of $H$ is defined to be
$$I(H) = \big( x^E ~|~ E \subseteq X \textrm{ is an edge in } H\big).$$
\end{definition}

The notion of Stanley-Reisner ideal and edge ideal give the following one-to-one correspondences that allow us to pass back and forth from squarefree monomial ideals to simplicial complexes and simple hypergraphs.

$$\left\{ \begin{array}{ll} \textrm{simplicial complexes} \\ \textrm{over } X\end{array}\right\} \longleftrightarrow \left\{\begin{array}{ll} \textrm{squarefree monomial} \\ \textrm{ideals in } R\end{array}\right\} \longleftrightarrow \left\{\begin{array}{ll} \textrm{simple hypergraphs} \\ \textrm{over } X\end{array} \right\}$$

Stanley-Reisner ideals and edge ideals are also closely connected via the notion of independence complex.

\begin{lemma} \label{lem.IndComp}
Let $H$ be a simple hypergraph and let $\Delta = \Delta(H)$ be its independence complex. Then
$$I_\Delta = I(H).$$
\end{lemma}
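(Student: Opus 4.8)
The plan is to prove the equality of the two ideals by a direct double containment checked on monomial generators, unwinding the definitions of \emph{independent set}, \emph{nonface}, and \emph{edge}. Both $I_\Delta$ and $I(H)$ are squarefree monomial ideals: the former is generated by the monomials $x^F$ with $F \subseteq X$ a nonface of $\Delta$, and the latter by the monomials $x^E$ with $E \in \E(H)$ an edge. Since an inclusion of monomial ideals may be verified by showing each generator of one ideal lies in the other, it suffices to compare these two generating sets.

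First I would characterize the nonfaces of $\Delta = \Delta(H)$. By definition the faces of $\Delta$ are precisely the independent sets of $H$, i.e. those $F \subseteq X$ that contain no edge of $H$. Dually, $F$ fails to be a face exactly when $F$ is not independent, that is, when $E \subseteq F$ for some edge $E \in \E(H)$. From this the two inclusions fall out immediately. For $I_\Delta \subseteq I(H)$: given a generator $x^F$ with $F$ a nonface, choose an edge $E \subseteq F$; then $x^E$ divides $x^F$, so $x^F \in I(H)$. For $I(H) \subseteq I_\Delta$: any edge $E$ contains itself as an edge, hence is not independent and so is a nonface of $\Delta$, whence $x^E$ is one of the defining generators of $I_\Delta$. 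Together these give $I_\Delta = I(H)$.

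I expect no genuine obstacle here; the argument is a direct translation of definitions, and the only points needing a moment's care are bookkeeping rather than mathematics. One should note that the empty set is always a face, being independent since edges are nonempty, so that $\Delta$ is a bona fide simplicial complex; and the reasoning applies verbatim to edges of cardinality one (isolated loops), where $x^E$ is simply a single variable. Finally, I would remark that simplicity of $H$ is not actually required for the stated equality of ideals — it is only needed to guarantee that the edges of $H$ are exactly the \emph{minimal} nonfaces of $\Delta$, so that the edges yield precisely the minimal monomial generators on both sides.
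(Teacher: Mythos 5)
Your proof is correct: the paper states this lemma without proof, treating it as a standard fact, and your double-containment argument on generators is precisely the standard verification one would supply. Your closing observations (the empty set is a face since edges are nonempty, and simplicity of $H$ is only needed to identify the edges with the \emph{minimal} nonfaces, not for the equality of ideals) are also accurate.
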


\begin{example} The edge ideal of the graph $G$ in Figure \ref{fig:Graph} is the same as the Stanley-Reisner ideal of its independence complex, the simplicial complex in Figure \ref{fig:SimplicialComplex}.
\end{example}


\subsection{Alexander duality} The Alexander duality theory for simplicial complexes carries nicely over to squarefree monomial ideals and have proved to be a significant tool in the study of these ideals.

\begin{definition} Let $\Delta$ be a simplicial complex over the vertex set $X$. The Alexander dual of $\Delta$, denoted by $\Delta^\vee$, is the simplicial complex over $X$ with faces
$$\{X \setminus F ~|~ F \not\in \Delta\}.$$
\end{definition}

Notice that $\Delta^{\vee\vee} = \Delta$. If $I = I_\Delta$ then we shall denote by $I^\vee$ the Stanley-Reisner ideal of the Alexander dual $\Delta^\vee$. Also, if $I = I(H)$ then we shall denote by $H^\vee$ the simple hypergraph corresponding to $I^\vee$.

It is a celebrated result of Terai \cite{Terai} that the regularity of a squarefree monomial ideal can be related to the projective dimension of its Alexander dual.

\begin{theorem} \label{thm.Terai}
Let $I \subseteq R$ be a squarefree monomial ideal. Then
$$\reg(I) = \pd(R/I^\vee).$$
\end{theorem}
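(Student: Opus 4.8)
The plan is to prove Terai's theorem by computing both sides through the same homological data, namely the $\ZZ^n$-graded Betti numbers of $R/I$ and $R/I^\vee$, and then invoking a duality between these two sets of Betti numbers. First I would recall that for a squarefree monomial ideal, the minimal free resolution is $\ZZ^n$-graded, so the regularity can be read off from the squarefree degrees appearing in the resolution. Concretely, by Hochster's formula the $\ZZ^n$-graded Betti numbers of $R/I_\Delta$ in squarefree degree $W \subseteq X$ are
$$\beta_{i, W}(R/I_\Delta) = \dim_K \widetilde{H}_{|W|-i-1}\big(\Delta[W]; K\big),$$
where $\Delta[W]$ is the induced subcomplex. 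This translates $\reg(I_\Delta) = \max\{\, j - i : \beta_{i,j}(R/I_\Delta) \neq 0\,\} - 1$ (shifting by the ideal-versus-quotient convention) into a statement about the top nonvanishing degrees of reduced simplicial homology of induced subcomplexes.

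Next I would do the same on the Alexander dual side, but now recording projective dimension rather than regularity: $\pd(R/I^\vee) = \max\{\, i : \beta_{i,W}(R/I_{\Delta^\vee}) \neq 0 \text{ for some } W\,\}$, again expressed via Hochster's formula in terms of $\widetilde{H}_*(\Delta^\vee[W]; K)$. The key bridge is the combinatorial Alexander duality isomorphism
$$\widetilde{H}_{i}\big(\Delta^\vee; K\big) \cong \widetilde{H}^{\,|X|-i-3}\big(\Delta; K\big),$$
together with its compatibility under passing to induced subcomplexes, since the Alexander dual of an induced restriction of $\Delta$ corresponds to an appropriate restriction of $\Delta^\vee$ on the complementary vertex set. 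Matching up the homological indices and the cardinalities $|W|$ versus $|X \setminus W|$ produces a precise dictionary $\beta_{i,W}(R/I) \leftrightarrow \beta_{i',W'}(R/I^\vee)$ with $W' = X \setminus W$ and the homological degrees shifted in exactly the way that sends the quantity $j - i$ (governing regularity) on one side to the quantity $i'$ (governing projective dimension) on the other.

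I expect the main obstacle to be the bookkeeping of the several index shifts and sign/cardinality conventions so that they line up exactly, rather than any deep conceptual difficulty. One must be careful to track the difference between resolving the ideal $I$ and resolving the quotient $R/I$ (a shift by one in homological degree), the $-1$ in the definition of regularity, the role of $|W|$ in Hochster's formula, and the $|X|-i-3$ shift in Alexander duality; a single off-by-one error collapses the whole correspondence. A clean way to organize this is to phrase everything in terms of the squarefree Betti numbers and verify the single master identity
$$\beta_{i,W}(I_\Delta) = \beta_{|W|-i-1,\, X\setminus W}\big(I_{\Delta^\vee}\big)$$
directly from Hochster's formula and combinatorial Alexander duality, and then observe that taking the maximum of $|W| - i$ over the left side (which is $\reg I + 1$) coincides with taking the maximum homological index on the right side (which is $\pd(R/I^\vee)$). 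An alternative, more conceptual route would be to use that the Alexander dual corresponds at the level of resolutions to an appropriate application of the functor $\operatorname{Ext}_R^*(-, R)$ or local duality, exhibiting the minimal free resolution of $I^\vee$ as a kind of dual of that of $R/I$; this avoids explicit simplicial homology but requires more care in identifying the duality functor precisely, so for a survey the Hochster-formula approach is the most transparent.
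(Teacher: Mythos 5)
The paper itself offers no proof of Theorem \ref{thm.Terai}; it is quoted from Terai's work. Your overall strategy (Hochster's formula on both sides plus combinatorial Alexander duality) is indeed the standard route, so I will assess the details on their own merits.

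There is a genuine gap at the ``key bridge.'' You assert that the Alexander dual of an induced subcomplex $\Delta[W]$ corresponds to the induced subcomplex of $\Delta^\vee$ on the complementary vertex set, and you propose the master identity $\beta_{i,W}(I_\Delta)=\beta_{|W|-i-1,\,X\setminus W}(I_{\Delta^\vee})$. Both claims are false. The correct combinatorial statement is that the Alexander dual of $\Delta[W]$ \emph{taken within the vertex set $W$} is the \emph{link} $\link_{\Delta^\vee}(X\setminus W)$, not a restriction of $\Delta^\vee$; restrictions on one side are exchanged with links on the other. Consequently no bijective matching of individual $\ZZ^n$-graded Betti numbers of $I$ and $I^\vee$ can exist: for $I=(x_1x_2)\subseteq K[x_1,x_2]$ one has $I^\vee=(x_1,x_2)$, and $I$ has a single nonzero graded Betti number while $I^\vee$ has three, so already $\beta_{0,\{x_1,x_2\}}(I)=1$ whereas your formula demands $\beta_{1,\emptyset}(I^\vee)=1$, which is impossible since every syzygy of $I^\vee$ lives in a nonzero squarefree degree. (Only the \emph{extremal} Betti numbers correspond under duality, by Bayer--Charalambous--Popescu, and even there the degree is preserved rather than complemented.)

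The repair is to use two \emph{different} homological characterizations on the two sides, exactly as in Lemma \ref{lem:RegFromTopology}: compute $\reg(I_\Delta)$ from restrictions via Hochster's formula, $\beta_{i,W}(R/I_\Delta)=\dim_K\widetilde H_{|W|-i-1}(\Delta[W];K)$, but compute $\pd(R/I_{\Delta^\vee})=|X|-\operatorname{depth}(R/I_{\Delta^\vee})$ from local cohomology, whose graded pieces are governed by \emph{links} of faces of $\Delta^\vee$ (equivalently, use the dual ``link'' form of Hochster's formula on that side). Then combinatorial Alexander duality inside the vertex set $W$, namely $\widetilde H_{j}\bigl(\link_{\Delta^\vee}(X\setminus W);K\bigr)\cong\widetilde H^{\,|W|-j-3}\bigl(\Delta[W];K\bigr)$, lines up the two maxima and yields $\reg(I)=\pd(R/I^\vee)$ after the index bookkeeping you anticipate (note also that $\reg(I)=\max\{j-i:\beta_{ij}(R/I)\neq0\}+1$, not $-1$ as written). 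With the restriction-versus-link exchange put in, your outline becomes the standard Eagon--Reiner/Terai argument; without it, the central identity you plan to verify is simply not true.
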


Theorem \ref{thm.Terai} basically says that studying the regularity of squarefree monomial ideals is equivalent to studying the projective dimension of squarefree monomial ideals. Many studies in commutative algebra take the later point of view.



\subsection{Castelnuovo-Mumford regularity} The regularity of graded modules over the polynomial ring $R$ can be defined in various ways. Let $\mm$ denote the maximal homogeneous ideal in $R$.

\begin{definition} \label{def.reg}
Let $M$ be a finitely generated graded $R$-module. For $i \ge 0$, let
$$a^i(M) = \left\{ \begin{array}{ll} \max\left\{l \in \ZZ ~\Big|~ \big[H^i_\mm(M)\big]_l \not= 0\right\} & \textrm{if } H^i_\mm(M) \not= 0 \\ - \infty & \textrm{otherwise.} \end{array} \right.$$
The \emph{regularity} of $M$ is defined to be
$$\reg(M) = \max_{i \ge 0} \{a^i(M)\}.$$
\end{definition}

Note that $a^i(M) = 0$ for $i > \dim M$, so the regularity of $M$ is well-defined. This invariant can also be computed via the minimal free resolution (cf. \cite{Chardin:2007, EG}).

\begin{definition} \label{def-prop}
Let $M$ be a graded $R$-module and let
$$0 \rightarrow \bigoplus_{j \in \ZZ} R(-j)^{\beta_{pj}(M)} \rightarrow \cdots \rightarrow \bigoplus_{j \in \ZZ} R(-j)^{\beta_{0j}(M)} \rightarrow M \rightarrow 0$$
be its minimal free resolution. Then the regularity of $M$ is given by
$$\reg(M) = \max \{j-i ~|~ \beta_{ij}(M) \not= 0\}.$$
\end{definition}

By looking at the minimal free resolution, it is easy to see that $\reg(R/I) = \reg(I) - 1$, so we shall work with $\reg(I)$ and $\reg(R/I)$ interchangeably. 


The focus of this paper is on squarefree monomial ideals. As we have seen, squarefree monomial ideals are in direct one-to-one correspondence with simplicial complexes, and the regularity of these ideals can also be computed from the reduced homology groups of corresponding simplicial complexes. We summarize this connection in the following lemma.

\begin{lemma} \label{lem:RegFromTopology}
For a simplicial complex $\Delta$, the following are equivalent:
\begin{enumerate}
\item $\reg(R/I_{\Delta})\geq d$.
\item $\tilde{H}_{d-1}(\Delta[S])\neq0$, where $\Delta[S]$ denotes the
induced subcomplex on some subset $S$ of vertices.
\item $\tilde{H}_{d-1}(\link_{\Delta}\sigma)\neq0$ for some face $\sigma$
of $\Delta$.
\end{enumerate}
\end{lemma}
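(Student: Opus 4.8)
The plan is to prove the equivalence of the three statements by relating the regularity of $R/I_\Delta$ to reduced simplicial homology through Hochster's formula, which expresses the graded Betti numbers of a Stanley-Reisner ring in terms of the reduced homology of induced subcomplexes. Recall that Hochster's formula states
\[
\beta_{i,\sigma}(R/I_\Delta) = \dim_K \tilde{H}_{|\sigma|-i-1}(\Delta[\sigma]; K),
\]
where the sum over all $\sigma$ with $|\sigma| = j$ recovers the multigraded Betti number $\beta_{ij}$. Combining this with the resolution-based description of regularity from Definition \ref{def-prop}, namely $\reg(R/I_\Delta) = \max\{j - i \mid \beta_{ij}(R/I_\Delta) \neq 0\}$, I would extract a direct homological characterization of the regularity.

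For the implication $(1) \Leftrightarrow (2)$, I would argue as follows. The condition $\reg(R/I_\Delta) \geq d$ means there exist $i$ and a subset $\sigma$ with $|\sigma| = j$ such that $j - i \geq d$ and $\beta_{i,\sigma} \neq 0$. By Hochster's formula, $\beta_{i,\sigma} \neq 0$ is equivalent to $\tilde{H}_{|\sigma| - i - 1}(\Delta[\sigma]) \neq 0$, and here the homological degree is $|\sigma| - i - 1 = j - i - 1$. Thus $j - i \geq d$ together with nonvanishing homology says that $\tilde{H}_{k-1}(\Delta[\sigma]) \neq 0$ for some $k \geq d$ and some subset $\sigma$. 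The remaining point is to reduce from ``some $k \geq d$'' to ``exactly $k = d$'': this follows because if $\tilde{H}_{k-1}(\Delta[\sigma]) \neq 0$ with $k > d$, one can pass to a suitable subset $S \subseteq \sigma$ on which $\tilde{H}_{d-1}(\Delta[S]) \neq 0$. I would carry out this reduction by choosing $S$ minimal (with respect to inclusion) among subsets of $\sigma$ carrying nonvanishing reduced homology in the relevant range, using the fact that deleting a vertex relates the homology of $\Delta[S]$ and $\Delta[S \setminus v]$ via a Mayer-Vietoris or long-exact-sequence argument.

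For $(2) \Leftrightarrow (3)$, I would use the standard combinatorial relationship between induced subcomplexes and links. The key identity is that taking links and taking induced subcomplexes interact in a controlled way: for a face $\sigma$ and the complementary vertex set, the Alexander duality relationship (or directly the deletion-link decomposition) translates nonvanishing homology of an induced subcomplex into nonvanishing homology of a link of a face in the same homological degree. Concretely, I would invoke the combinatorial Alexander duality isomorphism $\tilde{H}_{i}(\Delta[S]) \cong \tilde{H}^{|S|-i-3}((\Delta[S])^\vee)$ together with the identification of the Alexander dual's faces with links, or alternatively argue directly that the induced subcomplex on $S$ deformation-retracts onto or has the same homology as an appropriate link.

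The main obstacle I anticipate is the bookkeeping in the degree-shifting step of $(1) \Leftrightarrow (2)$, specifically controlling the reduction from arbitrary $k \geq d$ down to $k = d$ while staying within induced subcomplexes. A sloppy version might lose track of whether the homology persists after deletion of vertices. The cleanest route is to prove directly that $\reg(R/I_\Delta) = \max\{\, d \mid \tilde{H}_{d-1}(\Delta[S]) \neq 0 \text{ for some } S \subseteq X\,\}$ as an equality of numbers, from which all three equivalences follow; establishing this equality rigorously, rather than just the one-directional inequalities, is where the real work lies. I would therefore structure the proof around first proving $(1) \Leftrightarrow (2)$ as the core statement via Hochster's formula, and then derive $(3)$ as a reformulation using the link-versus-induced-subcomplex dictionary.
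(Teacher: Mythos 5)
Your route for the equivalence of (1) and (2) --- Hochster's formula together with the Betti-number description of regularity from Definition \ref{def-prop} --- is the same one the paper takes, and you have correctly located the crux: Hochster's formula only tells you that $\reg(R/I_\Delta)\ge d$ is equivalent to $\tilde{H}_{k-1}(\Delta[S])\neq 0$ for \emph{some} $k\ge d$ and some $S$, and one must somehow get down to homological degree exactly $d-1$. The fix you propose (pass to a minimal subset $S$ and drop the degree by a Mayer--Vietoris argument on vertex deletions) cannot be carried out, because the statement you would be proving at that step is false. Take $\Delta$ to be the boundary of the triangle on $\{x,y,z\}$, so $I_\Delta=(xyz)$ and $\reg(R/I_\Delta)=2\ge 1$. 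Every induced subcomplex $\Delta[S]$ on a nonempty $S$ is a point, an edge, or $\Delta$ itself, hence connected, so $\tilde{H}_0(\Delta[S])=0$ for every $S$: no passage to subsets produces degree-$0$ homology from the class in $\tilde{H}_1(\Delta)$. What Hochster's formula yields directly is the single equality $\reg(R/I_\Delta)=\max\{\,d \mid \tilde{H}_{d-1}(\Delta[S])\neq0 \text{ for some } S\,\}$, i.e.\ the implication $(2)\Rightarrow(1)$ for every $d$ and $(1)\Rightarrow(2)$ only at the maximal $d$; the intermediate-degree version of (2) is a strictly stronger assertion, and the example above shows it can fail. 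So this step is not merely ``bookkeeping'' to be tightened --- it is a dead end, and the downward persistence in $d$ has to be obtained from the link formulation, not from induced subcomplexes.

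This is also where your treatment of (3) diverges from the paper's. You propose to deduce (3) from (2) by combinatorial Alexander duality, but the faces of $(\Delta[S])^\vee$ are complements of \emph{nonfaces} of $\Delta[S]$, not links of faces of $\Delta$, so the dictionary you invoke does not produce statement (3). The paper instead proves $(1)\Leftrightarrow(3)$ independently, from the local cohomology characterization of regularity (Definition \ref{def.reg}) together with the Hochster--Gr\"abe isomorphism $H^{i}_{\mathfrak{m}}(R/I_\Delta)_{-\sigma}\cong\tilde{H}^{i-|\sigma|-1}(\link_\Delta\sigma)$. That formulation is the one in which the degree count does descend for every $d$: links are closed under taking further links ($\link_{\link_\Delta\sigma}(v)=\link_\Delta(\sigma\cup\{v\})$ lowers the homological degree by one, and the link of a facet is $\{\emptyset\}$, carrying $\tilde{H}_{-1}\neq0$). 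In the hollow triangle, $\tilde{H}_0(\link_\Delta x)\neq0$ even though no induced subcomplex is disconnected. You should therefore prove $(1)\Leftrightarrow(3)$ by the local-cohomology route and handle (2) only through the max formula; as written, your argument for $(1)\Rightarrow(2)$ at non-maximal $d$ and your bridge from (2) to (3) both have genuine gaps.
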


\begin{proof} The equivalence of (1) and (2)
follows directly from Definition \ref{def-prop}, together with Hochster's formula for graded Betti numbers (as stated
in \cite[Corollary 5.12]{MS}). The equivalence of (1) and (3) follows directly from the local cohomology characterization
of regularity, together with the fact that $H_{\mathfrak{m}}^{i}(R/I_{\Delta})_{-\sigma}\cong\tilde{H}^{i-\left|\sigma\right|-1}(\link_{\Delta}\sigma)$
(see \cite[Chapter 13.2]{MS}).
\end{proof}

\begin{remark} For simplicity, if $I = I_\Delta$ then we sometimes write $\reg(\Delta)$ for $\reg(I)$, and if $I = I(H)$ then we write $\reg(H)$ for $\reg(I)$.
\end{remark}


\section{Regularity and induction} \label{sec.induction}

The backbone of most of the studies that we survey is mathematical induction based on combinatorial structures of given simplicial complexes and hypergraphs. The technique is to relate the regularity of a squarefree monomial ideal corresponding to a simplicial complex and/or hypergraph to that of smaller sub-ideals corresponding to subcomplexes and/or subhypergraphs. In this section, we discuss a number of inductive results that lie in the core of most of these studies. It is worth noting that there are also inductive results that go from smaller simplicial complexes and/or hypergraphs to larger ones; for instance, in the work of \cite{BC2013,MPZ}. Since we eventually are interested in bounds or values for the regularity, those works are beyond the scope of our survey.

We start the section by a few crude inductive bounds for the regularity of a hypergraph (or simplicial complex) in terms of that of subhypergraphs (or subcomplexes), that follow directly from Lemma \ref{lem:RegFromTopology}.

\begin{lemma} \label{lem:RegShrinksInSubhypergraphs} \quad
\begin{enumerate}
\item Let $H$ be a simple hypergraph.
Then $\reg(H)\geq\reg(H')$ for any induced subhypergraph $H'$
of $H$.
\item Let $\Delta$ be a simplicial complex.
Then $\reg(\Delta)\geq\reg(\link_{\Delta}(\sigma))$ for
any face $\sigma$ of $\Delta$.
\end{enumerate}
\end{lemma}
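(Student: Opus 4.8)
The plan is to derive both statements directly from the homological characterization of regularity in Lemma~\ref{lem:RegFromTopology}, since that lemma is precisely the tool designed to translate questions about $\reg$ into questions about reduced homology of induced subcomplexes and links. The key observation is that both ``induced subhypergraph'' and ``link of a face'' produce smaller complexes whose relevant homology groups sit inside the homology data governing the larger object, so the inequality $\reg \geq \reg(\text{smaller})$ should follow by exhibiting a single nonvanishing homology group of the smaller complex as a nonvanishing homology group contributing to the regularity of the larger one.

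For part (1), I would first pass from the hypergraph language to simplicial complexes via the independence complex, using Lemma~\ref{lem.IndComp}, so that $\reg(H) = \reg(R/I_{\Delta(H)})$. The crucial compatibility to check is that taking an induced subhypergraph $H' = H[Y]$ corresponds to taking an induced subcomplex of the independence complex: a subset $S \subseteq Y$ is independent in $H'$ exactly when it is independent in $H$, so $\Delta(H')$ equals the induced subcomplex $\Delta(H)[Y]$. Granting this, I would set $d = \reg(H')$ and apply the implication $(1) \Rightarrow (2)$ of Lemma~\ref{lem:RegFromTopology} to $\Delta(H')$ to obtain a subset $S$ of the vertices of $H'$ with $\tilde{H}_{d-1}(\Delta(H')[S]) \neq 0$. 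Since $\Delta(H')[S] = \Delta(H)[S]$ (as $S \subseteq Y \subseteq X$ and induced subcomplexes of induced subcomplexes are just induced subcomplexes on the smaller vertex set), the same subset $S$ witnesses condition (2) for $\Delta(H)$, and the implication $(2) \Rightarrow (1)$ gives $\reg(H) \geq d = \reg(H')$.

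For part (2), I would argue even more directly using the equivalence of (1) and (3) in Lemma~\ref{lem:RegFromTopology}. Let $\tau = \sigma$ be the given face and set $d = \reg(\link_\Delta(\sigma))$. Applying $(1) \Rightarrow (3)$ to the complex $\link_\Delta(\sigma)$ yields a face $\rho$ of $\link_\Delta(\sigma)$ with $\tilde{H}_{d-1}(\link_{\link_\Delta(\sigma)}(\rho)) \neq 0$. The essential point is the transitivity of links: for $\sigma, \rho$ with $\rho \in \link_\Delta(\sigma)$, one has $\link_{\link_\Delta(\sigma)}(\rho) = \link_\Delta(\sigma \cup \rho)$, which is immediate from the definition of link once one checks that $F$ is a face of either complex precisely when $F \cap (\sigma \cup \rho) = \emptyset$ and $\sigma \cup \rho \cup F \in \Delta$. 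Thus $\sigma \cup \rho$ is a face of $\Delta$ whose link has nonvanishing $(d-1)$-st reduced homology, so condition (3) holds for $\Delta$, and $(3) \Rightarrow (1)$ gives $\reg(\Delta) \geq d = \reg(\link_\Delta(\sigma))$.

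The only genuine verifications here are the two compatibility statements: that induced subcomplexes of independence complexes correspond to induced subhypergraphs, and that links compose (the ``link of a link is a link''). I expect the latter to be the main point requiring care, since one must correctly handle the disjointness conditions in the definition of link to confirm $\link_{\link_\Delta(\sigma)}(\rho) = \link_\Delta(\sigma \cup \rho)$; once this identity is in hand, the homological machinery of Lemma~\ref{lem:RegFromTopology} does all the remaining work and no computation with Betti numbers or local cohomology is needed.
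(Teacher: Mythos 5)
Your proof is correct and follows exactly the route the paper intends: the paper states this lemma without proof, remarking only that it follows directly from Lemma~\ref{lem:RegFromTopology}, and your argument supplies precisely the two compatibility facts ($\Delta(H[Y]) = \Delta(H)[Y]$ and $\link_{\link_{\Delta}(\sigma)}(\rho) = \link_{\Delta}(\sigma \cup \rho)$) needed to make that assertion rigorous. The only cosmetic point is the shift between $\reg(I)$ and $\reg(R/I)$ (Lemma~\ref{lem:RegFromTopology} is stated for $\reg(R/I_{\Delta})$), which the paper treats as interchangeable and which does not affect the inequality.
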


For any homogeneous ideal $I \subseteq R$ and any homogeneous element $h \in R$ of degree $d$, the following short exact sequence is standard in commutative algebra:
\begin{align}
0 \longrightarrow \frac{R}{I:h}(-d) \stackrel{\times h}{\longrightarrow} \frac{R}{I} \longrightarrow \frac{R}{I+h} \longrightarrow 0. \label{eq.ses}
\end{align}
By taking the long exact sequence of local cohomology modules associated to (\ref{eq.ses}), we get
\begin{align}
\reg(I) \le \max \{\reg(I:h) + d, \reg(I,h)\}. \label{eq.simplereg}
\end{align}

\begin{remark} \label{rem.1of2}
If $h$ is an indeterminate of $R$ appearing in $I$, then it was shown in \cite[Lemma 2.10]{DHS} that, in fact, $\reg(I)$ is always equal to either $\reg(I:h) + 1$ or $\reg(I,h)$.
\end{remark}

In practice, induction usually starts by deleting a vertex or a face (edge). That is, $h$ is often taken to be the variable corresponding to a vertex or the product of variables corresponding to a face (edge) of the simplicial complex (hypergraph).

\begin{remark}
Let $I = I_\Delta$ be the Stanley-Reisner ideal of a simplicial complex $\Delta$, and let $h = x^\sigma$ for a face $\sigma$ of dimension $d-1$ in $\Delta$. Then $I:h = I_{\link_\Delta(\sigma)}$ and $I+h = I_{\del_\Delta(\sigma)} + h$.
\end{remark}

For a subset $V$ of the vertices in a hypergraph $H$, let $H : V$ and $H + V$ denote the hypergraphs corresponding to the squarefree monomial ideals $I(H) : x^V$ and $I(H) + x^V$, respectively. As a consequence of (\ref{eq.simplereg}), we have the following inductive bounds.

\begin{theorem} \quad \label{thm.ind11}
\begin{enumerate}
\item Let $\Delta$ be a simplicial complex and let $\sigma$ be a face of dimension $d-1$ in $\Delta$. Then
$$\reg(\Delta) \le \max \{\reg(\link_\Delta(\sigma)) + d, \reg(\del_\Delta(\sigma))\}.$$
\item Let $H$ be a simple hypergraph and let $V$ be a collection of $d$ vertices in $H$. Then
$$\reg(H) \le \max\{ \reg(H : V) + d, \reg(H + V)\}.$$
\end{enumerate}
\end{theorem}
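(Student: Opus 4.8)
The plan is to obtain both statements as immediate specializations of the inequality (\ref{eq.simplereg}),
$$\reg(I) \le \max\{\reg(I:h) + \deg h, \ \reg(I,h)\},$$
which is already in hand from the short exact sequence (\ref{eq.ses}) and the induced long exact sequence of local cohomology. So the genuine analytic content is done; what remains is to choose the right element $h$ in each setting and to translate the colon ideal $I:h$ and the sum $I+(h)$ into the combinatorial operations appearing in the statement, together with a degree count to produce the shift ``$+d$''.

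For part (1), I would apply the bound to $I = I_\Delta$ with $h = x^\sigma$. Since $\dim\sigma = d-1$, the face $\sigma$ has exactly $d$ vertices, so $\deg x^\sigma = d$; this is the source of the shift. The preceding remark records $I_\Delta : x^\sigma = I_{\link_\Delta(\sigma)}$ and $I_\Delta + (x^\sigma) = I_{\del_\Delta(\sigma)} + (x^\sigma)$; since $x^\sigma$ already lies in $I_{\del_\Delta(\sigma)}$, the latter is simply $I_{\del_\Delta(\sigma)}$. Substituting these into (\ref{eq.simplereg}) gives exactly
$$\reg(\Delta) = \reg(I_\Delta) \le \max\{\reg(\link_\Delta(\sigma)) + d, \ \reg(\del_\Delta(\sigma))\}.$$
If the deletion identity is not taken as given, I would verify it at the level of ideals: the generators of $I_{\del_\Delta(\sigma)}$ are the monomials $x^G$ with $G$ a non-face of $\del_\Delta(\sigma)$, that is, either a non-face of $\Delta$ or a face of $\Delta$ containing $\sigma$; the first family generates $I_\Delta$ and the second is precisely the principal ideal $(x^\sigma)$, whence $I_{\del_\Delta(\sigma)} = I_\Delta + (x^\sigma)$.

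For part (2), I would apply the same bound to $I = I(H)$ with $h = x^V$; here $|V| = d$ forces $\deg x^V = d$. By the definition given just before the theorem, $H:V$ and $H+V$ are the hypergraphs whose edge ideals are $I(H):x^V$ and $I(H)+x^V$, so $\reg(H:V) = \reg(I(H):x^V)$ and $\reg(H+V) = \reg(I(H)+x^V)$ hold by construction. Feeding these into (\ref{eq.simplereg}) yields the claimed inequality at once. One point worth noting in passing is that $I(H):x^V$ is again a squarefree monomial ideal --- for a squarefree generator $x^E$ one has $(x^E):x^V = x^{E\setminus V}$ --- so that $H:V$ is genuinely a simple hypergraph and the notation is legitimate.

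In short, there is no serious obstacle: the hard step, deriving (\ref{eq.simplereg}) from the long exact sequence of local cohomology, is already carried out in the text, and both parts are corollaries obtained by specializing $h$ and invoking the standard Stanley--Reisner and edge-ideal dictionary. The only places to stay careful are the degree bookkeeping ($\deg x^\sigma = \deg x^V = d$, producing the correct shift) and the verification that the colon operation preserves squarefreeness, so that $\link$, $\del$, $H:V$ and $H+V$ are all well-defined.
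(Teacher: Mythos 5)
Your proposal is correct and follows exactly the route the paper intends: both parts are specializations of the bound (\ref{eq.simplereg}) with $h = x^\sigma$ (resp.\ $h = x^V$), using the identifications $I_\Delta : x^\sigma = I_{\link_\Delta(\sigma)}$ and $I_\Delta + (x^\sigma) = I_{\del_\Delta(\sigma)}$ from the preceding remark, together with the degree count $\deg h = d$. Your explicit verification that $x^\sigma$ already lies in $I_{\del_\Delta(\sigma)}$, and that colon ideals of squarefree monomial ideals by squarefree monomials remain squarefree, supplies the small details the paper leaves implicit.
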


Another basic exact sequence in commutative algebra is:
$$0 \longrightarrow \frac{R}{I \cap J} \longrightarrow \frac{R}{I} \oplus \frac{R}{J} \longrightarrow \frac{R}{I+J} \longrightarrow 0.$$
Let $E$ be an edge of a simple hypergraph $H$. By taking $I = I(H \setminus E)$ and $J = (x^E)$, this sequence gives
$$0 \longrightarrow \frac{R}{(x^E) \cap I(H \setminus E)} \longrightarrow \frac{R}{(x^E)} \oplus \frac{R}{I(H \setminus E)} \longrightarrow \frac{R}{I(H)} \longrightarrow 0.$$
Taking the associated long exact sequence of cohomology modules again, we get
\begin{align}
\reg(H) \le \max\{|E|, \reg(H \setminus E), \reg\big((x^E) \cap I(H \setminus E)\big) - 1\}. \label{eq.splitedge}
\end{align}

Recall that $H_E$ is the contraction of $H \setminus N(E)$ to the vertices $X \setminus N[E]$. It is easy to see that $(x^E) \cap I(H \setminus E) = x^E(y ~|~ y \in N(E)) + I(H_E)$. Also, since the variables in $N[E]$ do not appear in $H_E$, by taking the tensor product of minimal free resolutions, we get
$$\reg\big((x^E) \cap I(H \setminus E)\big) = \reg(I(H_E)) + |E|.$$
Thus, (\ref{eq.splitedge}) gives the following inductive bound.

\begin{theorem} \label{thm.ind3}
Let $H$ be a simple hypergraph and let $E$ be an edge of cardinality $d$ in $H$. Then
$$\reg(H) \le \max\{d, \reg(H \setminus E), \reg(H_E) + d-1\}.$$
\end{theorem}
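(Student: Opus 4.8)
The plan is to assemble the bound from the numerical inequality (\ref{eq.splitedge}), which has already been extracted from the Mayer--Vietoris sequence, together with an explicit evaluation of the intersection term $\reg\big((x^E)\cap I(H\setminus E)\big)$. Writing $I=I(H\setminus E)$ and $J=(x^E)$, we have $I+J=I(H)$, since the edge ideal of $H$ is generated by $x^E$ together with the monomials $x^{E'}$ for the edges $E'\neq E$. The long exact sequence in local cohomology attached to the displayed short exact sequence, via the standard rule $\reg(C)\le\max\{\reg(B),\reg(A)-1\}$ for a short exact sequence $0\to A\to B\to C\to 0$, together with $\reg(R/(x^E))=d-1$, yields precisely (\ref{eq.splitedge}) after translating between $\reg(I)$ and $\reg(R/I)$.

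The heart of the argument is to identify $(x^E)\cap I(H\setminus E)$ and compute its regularity. First I would describe the intersection: since the intersection of two monomial ideals is generated by the pairwise least common multiples of their generators, $(x^E)\cap I(H\setminus E)$ is generated by the monomials $x^{E\cup E'}=x^E\cdot x^{E'\setminus E}$ as $E'$ ranges over edges other than $E$. I would then sort these generators according to how $E'$ meets $N(E)$. If $E'\cap N(E)\neq\emptyset$, choose $y\in E'\cap N(E)$; then $x^{E\cup E'}$ is divisible by $x^E\cdot y$, and conversely each $y\in N(E)$ arises from an edge $F\cup\{y\}$ with $F\subseteq E$, producing the generator $x^E\cdot y$. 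If $E'\cap N(E)=\emptyset$, then $E'\setminus E\subseteq X\setminus N[E]$, and the minimal such sets $E'\setminus E$ are exactly the edges of the contraction $H_E$. This gives $(x^E)\cap I(H\setminus E)=x^E\cdot\big[(y\mid y\in N(E))+I(H_E)\big]$.

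Finally I would compute the regularity of this ideal. Because $E$, $N(E)$ and $X\setminus N[E]$ partition the variables, the ideal $L:=(y\mid y\in N(E))+I(H_E)$ is a sum of two ideals supported on disjoint variable blocks, so $R/L$ is, up to the harmless polynomial factor $K[E]$, the tensor product over $K$ of $K[N(E)]/(y\mid y\in N(E))\cong K$ and $K[X\setminus N[E]]/I(H_E)$. Regularity is additive under such tensor products, since the tensor product of the two minimal free resolutions is the minimal free resolution of the quotient, and the first factor contributes $0$; hence $\reg(R/L)=\reg(R/I(H_E))$ and $\reg(L)=\reg(H_E)$. Multiplication by the degree-$d$ monomial $x^E$ is an isomorphism of $R$-modules onto $x^EL$ shifting internal degrees by $d$, so $\reg\big((x^E)\cap I(H\setminus E)\big)=\reg(x^EL)=\reg(H_E)+d$. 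Substituting this into (\ref{eq.splitedge}) gives the claimed bound. I expect the main obstacle to be the bookkeeping in the intersection step --- verifying that no generators are lost and that exactly the edges of $H_E$ survive --- together with the additivity of regularity under tensor products over $K$, which is precisely where the disjointness of the three vertex blocks and the definition of $H_E$ are indispensable.
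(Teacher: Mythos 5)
Your proposal is correct and follows essentially the same route as the paper's own argument: it derives inequality (\ref{eq.splitedge}) from the Mayer--Vietoris-type short exact sequence, identifies $(x^E)\cap I(H\setminus E)=x^E\big[(y\mid y\in N(E))+I(H_E)\big]$, and evaluates the regularity of this ideal as $\reg(H_E)+d$ using the tensor product of minimal free resolutions over the disjoint variable blocks $E$, $N(E)$ and $X\setminus N[E]$. The only difference is that you spell out the lcm computation and the case analysis behind the identification of the intersection, which the paper dismisses as ``easy to see.''
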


Induction also works even when we do not necessarily split the edges of $H$ into disjoint subsets. Kalai and Meshulam \cite{KM} obtain the following powerful result. This result was later extended to arbitrary (not necessarily squarefree) monomial ideals by Herzog \cite{Herzog}.

\begin{theorem} \label{thm.ind2}
Let $I_1, \dots, I_s$ be squarefree monomial ideals in $R$. Then
$$\reg\left(R\Big/\sum_{i=1}^s I_i\right) \le \sum_{i=1}^s \reg(R/I_i).$$
\end{theorem}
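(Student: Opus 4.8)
The plan is to reduce to the case $s=2$ and then convert the statement into a purely topological one via Hochster's formula (Lemma \ref{lem:RegFromTopology}). First I would argue by induction on $s$: setting $J = \sum_{i=1}^{s-1} I_i$, which is again a squarefree monomial ideal, the case $s=2$ applied to $J$ and $I_s$ gives $\reg(R/(J+I_s)) \le \reg(R/J) + \reg(R/I_s)$, and the inductive hypothesis bounds $\reg(R/J)$ by $\sum_{i<s}\reg(R/I_i)$. So everything comes down to proving $\reg(R/(I_1+I_2)) \le \reg(R/I_1) + \reg(R/I_2)$.

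Write $I_j = I_{\Delta_j}$. Since a squarefree monomial lies in $I_1+I_2$ exactly when it lies in $I_1$ or in $I_2$, one has $I_1 + I_2 = I_{\Delta_1 \cap \Delta_2}$. By the equivalence of (1) and (2) in Lemma \ref{lem:RegFromTopology}, $\reg(R/I_\Delta)$ equals the Leray number $L(\Delta) := \max\{i+1 \mid \tilde{H}_i(\Delta[S]) \ne 0 \text{ for some } S\}$, so the goal is $L(\Delta_1 \cap \Delta_2) \le L(\Delta_1) + L(\Delta_2)$. Because $(\Delta_1 \cap \Delta_2)[S] = \Delta_1[S] \cap \Delta_2[S]$, and since an induced subcomplex of a $d$-Leray complex is again $d$-Leray, it suffices to prove the following homology-vanishing statement for complexes $\Gamma_1, \Gamma_2$ on a common vertex set: if $L(\Gamma_j) \le d_j$ then $\tilde{H}_i(\Gamma_1 \cap \Gamma_2) = 0$ for all $i \ge d_1 + d_2$.

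I would prove this by induction on the number of vertices $n$. Fix a vertex $v$, put $\Gamma = \Gamma_1 \cap \Gamma_2$, and decompose $\Gamma = \del_\Gamma(v) \cup \mathrm{star}_\Gamma(v)$, where the star is a cone (hence acyclic) and the overlap of the two pieces is $\link_\Gamma(v)$. The two key identities are $\del_\Gamma(v) = \del_{\Gamma_1}(v) \cap \del_{\Gamma_2}(v)$ and $\link_\Gamma(v) = \link_{\Gamma_1}(v) \cap \link_{\Gamma_2}(v)$, both on $n-1$ vertices. Here deletions are induced subcomplexes, hence $d_j$-Leray, and links of $d_j$-Leray complexes remain $d_j$-Leray (apply the link characterization, part (3) of Lemma \ref{lem:RegFromTopology}, to $\Gamma_j[T \cup \{v\}]$ at the face $\{v\}$). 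By the inductive hypothesis, both $\del_\Gamma(v)$ and $\link_\Gamma(v)$ are $(d_1+d_2)$-Leray. The Mayer--Vietoris sequence reads $\tilde{H}_i(\del_\Gamma v) \to \tilde{H}_i(\Gamma) \to \tilde{H}_{i-1}(\link_\Gamma v) \to \tilde{H}_{i-1}(\del_\Gamma v)$; the outer groups vanish for $i > d_1 + d_2$, forcing $\tilde{H}_i(\Gamma) = 0$ in that range.

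The boundary degree $i = d_1 + d_2$ is where I expect the real obstacle to lie. There Mayer--Vietoris only yields $\tilde{H}_{d_1+d_2}(\Gamma) \cong \ker\bigl(\tilde{H}_{d_1+d_2-1}(\link_\Gamma v) \to \tilde{H}_{d_1+d_2-1}(\del_\Gamma v)\bigr)$, and the Leray bounds alone do not make this kernel vanish, since $\link_\Gamma(v)$ is only known to be $(d_1+d_2)$-Leray. The hard part is to show this comparison map is injective. I would attempt to close this by strengthening the inductive statement so as to track the Mayer--Vietoris connecting homomorphisms functorially for $\Gamma_1$, $\Gamma_2$ and their intersection simultaneously (or by performing a secondary induction on $d_1 + d_2$), thereby controlling the offending kernel. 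This top-degree compatibility is the genuine content of the theorem of Kalai and Meshulam; the reductions preceding it are routine.
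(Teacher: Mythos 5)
The paper offers no proof of this theorem --- it is quoted directly from Kalai and Meshulam \cite{KM} --- so your proposal can only be measured against the original sources. Your preliminary reductions are all correct: the induction on $s$, the identity $I_1+I_2=I_{\Delta_1\cap\Delta_2}$, the translation via Lemma \ref{lem:RegFromTopology} into the Leray-number inequality $L(\Gamma_1\cap\Gamma_2)\le L(\Gamma_1)+L(\Gamma_2)$, and the compatibility of intersections with induced subcomplexes, deletions and links. But the proposal is not a proof, and you say so yourself. The Mayer--Vietoris argument over $\Gamma=\del_\Gamma(v)\cup\mathrm{star}_\Gamma(v)$ fails exactly at $i=d_1+d_2$, where the sequence only identifies $\tilde{H}_{d_1+d_2}(\Gamma)$ with the kernel of $\tilde{H}_{d_1+d_2-1}(\link_\Gamma(v))\to\tilde{H}_{d_1+d_2-1}(\del_\Gamma(v))$, and your inductive hypothesis gives no vanishing in degree $d_1+d_2-1$. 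The sentence promising to ``strengthen the inductive statement'' is a placeholder for the entire theorem: everything before it is routine bookkeeping (and would only yield the trivial-looking bound in degrees strictly above $d_1+d_2$), while the single degree you cannot handle is where all the content lives. Note also that a naive simultaneous induction with the companion union bound $L(\Gamma_1\cup\Gamma_2)\le d_1+d_2+1$ runs into the same boundary-degree problem one step later, so the fix is not as mechanical as the phrase suggests.

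Concretely, there are two known ways to supply the missing idea. Kalai and Meshulam \cite{KM} prove the intersection and union bounds together, using the Mayer--Vietoris sequence of the cover of $\Gamma_1\cup\Gamma_2$ by $\Gamma_1$ and $\Gamma_2$ itself, namely $\tilde{H}_{i+1}((\Gamma_1\cup\Gamma_2)[S])\to\tilde{H}_i((\Gamma_1\cap\Gamma_2)[S])\to\tilde{H}_i(\Gamma_1[S])\oplus\tilde{H}_i(\Gamma_2[S])$, which converts the union bound into the intersection bound with no boundary case; the genuine work is then in establishing the union statement, and reproducing that argument is exactly what your write-up omits. Alternatively, Herzog \cite{Herzog} gives a purely algebraic proof: from (multigraded) free resolutions of the $R/I_i$ he assembles a generalized Taylor complex that resolves $R/\sum I_i$, and reading off the shifts yields $\reg(R/\sum I_i)\le\sum\reg(R/I_i)$ for arbitrary monomial ideals. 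If you want a complete, checkable proof, I would recommend writing out Herzog's construction rather than trying to patch the top degree of your Mayer--Vietoris induction.
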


In particular, for edge ideals of a hypergraph and subhypergraphs, we have the following inductive bound.

\begin{corollary} \label{cor.ind2}
Let $H$ and $H_1, \dots, H_s$ be simple hypergraphs over the same vertex set $X$ such that $\E(H) = \bigcup_{i=1}^s \E(H_i)$. Then
$$\reg(R/I(H)) \le \sum_{i=1}^s \reg(R/I(H_i)).$$
\end{corollary}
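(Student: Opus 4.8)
The plan is to derive Corollary \ref{cor.ind2} as a direct specialization of Theorem \ref{thm.ind2}. The key observation is that the hypothesis $\E(H) = \bigcup_{i=1}^s \E(H_i)$ translates, under the edge ideal construction of Definition \ref{def.edgeideal}, into an ideal-theoretic sum: every edge of $H$ is an edge of some $H_i$, so its corresponding squarefree monomial generator $x^E$ lies in some $I(H_i)$, and conversely every generator of each $I(H_i)$ comes from an edge of $H_i \subseteq \E(H)$, hence is a generator of $I(H)$. Thus I would first establish the identity $I(H) = \sum_{i=1}^s I(H_i)$ by comparing minimal generating sets.

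Once this identity is in hand, the corollary is immediate: set $I_i = I(H_i)$ in Theorem \ref{thm.ind2}. Since each $I(H_i)$ is a squarefree monomial ideal in $R$ (all hypergraphs share the vertex set $X$, so all ideals live in the same polynomial ring), the hypotheses of Theorem \ref{thm.ind2} are satisfied, and we conclude
$$\reg\left(R\Big/I(H)\right) = \reg\left(R\Big/\sum_{i=1}^s I(H_i)\right) \le \sum_{i=1}^s \reg(R/I(H_i)),$$
which is exactly the desired inequality.

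The only subtlety worth flagging is the bookkeeping in the generator comparison. One must be slightly careful that the $H_i$ are simple hypergraphs, so that $I(H_i)$ is generated precisely by $\{x^E : E \in \E(H_i)\}$ with no redundant containments; but even if some generator $x^E$ appears in several $I(H_i)$, the sum of ideals is unaffected, since $\sum_i I(H_i)$ is generated by the union of the generating sets regardless of repetition. Likewise, the requirement that all hypergraphs sit over the common vertex set $X$ guarantees we are summing ideals inside a single ring $R = K[x_1,\dots,x_n]$, so the application of Theorem \ref{thm.ind2} is unambiguous.

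I do not anticipate a genuine obstacle here, as the statement is purely a translation of Theorem \ref{thm.ind2} into the language of edge ideals via the dictionary established in Section \ref{sec.prel}. The entire content of the corollary is the elementary set-theoretic fact that a union of edge sets corresponds to a sum of edge ideals; the hard analytic work is already packaged inside Theorem \ref{thm.ind2} (the Kalai--Meshulam bound), which I am permitted to assume. If a proof were written out in full, it would amount to a single sentence recording the ideal identity followed by the invocation of Theorem \ref{thm.ind2}.
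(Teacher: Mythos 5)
Your proposal is correct and matches the paper's approach exactly: the paper presents this corollary as an immediate specialization of Theorem \ref{thm.ind2} (``In particular, for edge ideals\dots''), relying on precisely the identity $I(H)=\sum_{i=1}^s I(H_i)$ that you record. Nothing further is needed.
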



\section{Combinatorial bounds for regularity} \label{sec.bound}

In this section, we examine various bounds for the regularity of a squarefree monomial ideal in terms of combinatorial data from associated simplicial complex and hypergraph. These bounds can be proved using inductive results from Section \ref{sec.induction}. There are also works in the literature that relate the regularity of (squarefree) monomial ideals to other algebraic invariants (cf. \cite{AA, C, FKT, HT}). These works are beyond the scope of this survey. 

For edge ideals of graphs, it turns out that the matching number and induced matching number provide nice upper and lower bounds for the regularity. The following result is due to Katzman \cite[Lemma 2.2]{Katzman}.

\begin{theorem} \label{thm.inducedmatchinggraph}
Let $G$ be a simple graph and let $\nu(G)$ be the maximum size of an induced matching in $G$. Then
$$\reg(I(G)) \ge \nu(G)+1.$$
\end{theorem}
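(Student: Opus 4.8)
The plan is to use the topological characterization of regularity recorded in Lemma \ref{lem:RegFromTopology}. Writing $\Delta = \Delta(G)$ for the independence complex of $G$, Lemma \ref{lem.IndComp} gives $I(G) = I_\Delta$, and since $\reg(I(G)) = \reg(R/I(G)) + 1$, it suffices to exhibit an induced subcomplex $\Delta[S]$ with $\tilde{H}_{\nu(G)-1}(\Delta[S]) \ne 0$; by the equivalence of (1) and (2) in Lemma \ref{lem:RegFromTopology} this forces $\reg(R/I(G)) \ge \nu(G)$, and hence $\reg(I(G)) \ge \nu(G) + 1$.

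Set $c = \nu(G)$ and fix a maximum induced matching $\{e_1, \dots, e_c\}$, writing $e_i = \{a_i, b_i\}$. The natural choice is $S = \bigcup_{i=1}^c e_i$, the set of $2c$ vertices covered by the matching. The crucial point, which I would verify first, is that by the very definition of an induced matching the induced subgraph $G[S]$ contains no edges beyond $e_1, \dots, e_c$; thus $G[S]$ is a disjoint union of $c$ edges, and $\Delta[S]$ is exactly its independence complex.

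The main computation is then to identify this independence complex up to homotopy. For a disjoint union of graphs the independence complex is the join of the independence complexes of the pieces, so $\Delta[S] \cong \Delta(e_1) * \cdots * \Delta(e_c)$. Each factor $\Delta(e_i)$ is the independence complex of a single edge, i.e.\ the two points $a_i$ and $b_i$ with no face joining them, which is a $0$-sphere. Since the join of $c$ copies of $S^0$ is the $(c-1)$-sphere $S^{c-1}$, we obtain $\Delta[S] \cong S^{c-1}$, and therefore $\tilde{H}_{c-1}(\Delta[S]) \cong K \ne 0$. Invoking Lemma \ref{lem:RegFromTopology} with $d = c$ concludes the proof.

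The step demanding the most care is the reduction to the join: one must check that $G[S]$ has \emph{exactly} the edges $e_1, \dots, e_c$, and this is precisely where the ``induced'' hypothesis is essential. For an arbitrary matching $G[S]$ could carry additional edges, and $\Delta[S]$ would then be a proper subcomplex of $S^{c-1}$ whose top homology need not survive, so the lower bound could fail. The remaining ingredients — the join formula for independence complexes of disjoint unions and the identity expressing the join of $c$ copies of $S^0$ as $S^{c-1}$ — are standard facts of simplicial topology and require no new argument.
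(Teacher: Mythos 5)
Your argument is correct and follows essentially the same route as the paper's proof of the hypergraph generalization (Theorem \ref{thm.inducedmatching}): restrict to the induced subgraph on the vertices of the matching, where the induced hypothesis guarantees the only edges are $e_1,\dots,e_c$, and invoke the monotonicity of regularity under passing to induced subcomplexes, which is exactly Lemma \ref{lem:RegShrinksInSubhypergraphs} and is itself derived from Lemma \ref{lem:RegFromTopology}. The only difference is that you explicitly justify the value of the regularity of a disjoint union of edges via the join-of-$S^0$'s computation, a detail the paper simply asserts.
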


This result is generalized in \cite[Theorem 6.5]{HVT2008} for properly connected hypergraph, and extended for all simple hypergraph in \cite[Corollary 3.9]{MV}. In fact, the result is a direct consequence of the inductive bound in Lemma \ref{lem:RegShrinksInSubhypergraphs}.

\begin{theorem} \label{thm.inducedmatching}
Let $H$ be a simple hypergraph. Suppose that $\{E_1, \dots, E_s\}$ forms an induced matching in $H$. Then
$$\reg(H) \ge \sum_{i=1}^s (|E_i|-1) + 1.$$
\end{theorem}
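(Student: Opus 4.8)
The plan is to reduce immediately to the induced subhypergraph on the matched vertices, and then compute its regularity exactly, exploiting the fact that an induced matching presents itself as a disjoint union of single edges.

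First I would set $S = \bigcup_{i=1}^s E_i$ and let $H' = H[S]$ be the induced subhypergraph of $H$ on $S$. By the definition of an induced matching, the edges of $H'$ are \emph{exactly} $E_1, \dots, E_s$; and since a matching consists of pairwise disjoint edges, the sets $E_1, \dots, E_s$ partition $S$. Hence $H'$ is a disjoint union of single-edge hypergraphs and $I(H') = (x^{E_1}, \dots, x^{E_s})$, with the generators supported on pairwise disjoint sets of variables. By Lemma \ref{lem:RegShrinksInSubhypergraphs}(1) we have $\reg(H) \ge \reg(H')$, so it suffices to prove $\reg(H') = \sum_{i=1}^s(|E_i|-1)+1$.

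Next I would compute $\reg(H')$. For a single edge $E_i$ the ideal $(x^{E_i})$ is principal, generated in degree $|E_i|$, so $R/(x^{E_i})$ has the two-term minimal resolution $0 \to R(-|E_i|) \to R \to R/(x^{E_i}) \to 0$, giving $\reg\big(R/(x^{E_i})\big) = |E_i|-1$. Because the $E_i$ lie in disjoint variable sets, $R/I(H')$ is, up to the free polynomial variables coming from vertices outside $S$ (which do not change regularity), the tensor product over $K$ of the rings $K[E_i]/(x^{E_i})$. The tensor product of the individual minimal free resolutions is a minimal free resolution of $R/I(H')$, so the regularities add:
\[
\reg\big(R/I(H')\big) = \sum_{i=1}^s \reg\big(R/(x^{E_i})\big) = \sum_{i=1}^s(|E_i|-1).
\]
Converting via $\reg(I(H')) = \reg(R/I(H'))+1$ then yields $\reg(H') = \sum_{i=1}^s(|E_i|-1)+1$, and combining with the reduction step finishes the argument.

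The only point demanding care is the additivity of regularity under tensoring over disjoint variable sets: one must check that the tensor product of the factor resolutions stays minimal (its differentials have entries in $\mm$ precisely because each factor resolution is minimal) and that the bigraded Betti numbers convolve so that the maximal value of $j-i$ is the sum of the maximal values of the factors. This is exactly the computation already invoked just before Theorem \ref{thm.ind3}. Alternatively, and perhaps more cleanly given the tools at hand, one could avoid the resolution bookkeeping by applying the topological criterion of Lemma \ref{lem:RegFromTopology}: the independence complex $\Delta(H')$ is the join $\Delta(H[E_1]) * \cdots * \Delta(H[E_s])$, each factor $\Delta(H[E_i])$ is the boundary of a simplex and hence a sphere $S^{|E_i|-2}$, and the join of these spheres is a sphere of dimension $\sum_{i=1}^s(|E_i|-1)-1$. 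Its reduced homology is nonzero in that top degree, so with $S$ as the chosen vertex subset Lemma \ref{lem:RegFromTopology} gives $\reg(R/I(H')) \ge \sum_{i=1}^s(|E_i|-1)$ directly, and the bound follows as before.
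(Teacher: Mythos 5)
Your proposal is correct and follows exactly the paper's argument: restrict to the induced subhypergraph $H'$ on $\bigcup_i E_i$, note its only edges are $E_1,\dots,E_s$, compute $\reg(H')=\sum_{i=1}^s(|E_i|-1)+1$, and conclude via Lemma \ref{lem:RegShrinksInSubhypergraphs}. The only difference is that you supply the details of the regularity computation for a disjoint union of edges (via tensor products of resolutions, or the join-of-spheres argument), which the paper simply asserts.
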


\begin{proof} Let $H'$ be the induced subhypergraph of $H$ on the vertex set $\bigcup_{i=1}^s E_i$. Since $\{E_1, \dots, E_s\}$ forms an induced matching in $H$, these are the only edges in $H'$. Thus,
$$\reg(H') = \sum_{i=1}^s (|E_i|-1) + 1.$$
Moreover, by Lemma \ref{lem:RegShrinksInSubhypergraphs}, $\reg(H) \ge \reg(H')$. The result now follows.
\end{proof}

\begin{remark} It is clear that if $H$ consists of disjoint edges then the bound in Theorem \ref{thm.inducedmatching} becomes an equality. On the other hand, as we shall see later in Example \ref{ex.matching}, the regularity of a hypergraph can be arbitrarily larger than the right hand side of the bound in Theorem \ref{thm.inducedmatching}.
\end{remark}

Lower bounds and upper bounds are more interesting when they go together. A natural invariant related to the induced matching number is the (\emph{minimax}) matching number. The following result was proved in \cite[Theorem 6.7]{HVT2008} and \cite[Theorem 11]{Russ}.

\begin{theorem} \label{thm.matching}
Let $G$ be a simple graph. Let $\beta(G)$ be the minimum size of a maximal matching $G$. Then
$$\reg(G) \le \beta(G) + 1.$$
\end{theorem}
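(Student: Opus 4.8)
The plan is to argue by induction on the number of vertices of $G$, using the specialization of the short exact sequence bound \eqref{eq.simplereg} to a single variable $h=x$ (equivalently, Theorem \ref{thm.ind11}(2) with $V=\{x\}$). Writing $N[x]$ for the closed neighbourhood of a vertex $x$, a direct computation gives $I(G):x=(y\mid y\in N(x))+I(G\setminus N[x])$, while $R/(I(G)+(x))\cong R/I(G\setminus x)$; in both cases the extra linear forms and the variable $x$ do not change the regularity, so $\reg(I(G):x)=\reg(I(G\setminus N[x]))$ and $\reg(I(G)+(x))=\reg(I(G\setminus x))$. Hence
\[
\reg(I(G))\le\max\{\reg(I(G\setminus N[x]))+1,\ \reg(I(G\setminus x))\}.
\]
Since $G\setminus N[x]$ and $G\setminus x$ have strictly fewer vertices, the result will follow once I exhibit a vertex $x$ for which both terms on the right are bounded by $\beta(G)+1$.

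This reduces the theorem to two monotonicity statements for the minimum maximal matching number. First, I claim that $\beta(G\setminus x)\le\beta(G)$ for every vertex $x$; granting this, the inductive hypothesis gives $\reg(I(G\setminus x))\le\beta(G\setminus x)+1\le\beta(G)+1$. Second, and this is the crucial point, I claim that if $x$ is an endpoint of an edge in some minimum maximal matching $M$ of $G$, then $\beta(G\setminus N[x])\le\beta(G)-1$; granting this, the inductive hypothesis gives $\reg(I(G\setminus N[x]))\le\beta(G\setminus N[x])+1\le\beta(G)$, so the first term above is at most $\beta(G)+1$. Taking $x$ to be an endpoint of a matching edge of $M$ (such an edge exists whenever $G$ has an edge, and the edgeless case is immediate) then closes the induction and yields $\reg(I(G))\le\beta(G)+1$.

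The first claim is routine: if $x$ is not covered by $M$ then $M$ is already maximal in $G\setminus x$, and if $e\in M$ covers $x$ then one modifies $M\setminus\{e\}$ by re-blocking the single newly exposed vertex, producing a maximal matching of $G\setminus x$ of size at most $|M|$. The hard part will be the second claim, where the naive approach fails: restricting $M$ to the edges disjoint from $N[x]$ need not give a maximal matching of $G\setminus N[x]$, because a matching edge may \emph{straddle} the boundary, with one endpoint in $N(x)$ and the other outside $N[x]$, and deleting such an edge can expose previously blocked edges. I would control this by a counting argument. Writing $x=u$ with $\{u,v\}\in M$, let $M_x$ be the set of matching edges disjoint from $N[x]$, let $t$ be the number of matching edges meeting $N[x]$, and let $s$ be the number of straddling edges. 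The key observations are that every edge of $G\setminus N[x]$ left unblocked by $M_x$ must pass through the set $W$ of outer endpoints of straddling edges, with $|W|=s$, so that $M_x$ can be completed to a maximal matching of $G\setminus N[x]$ by adding at most $s$ edges; and that the edge $\{u,v\}$ lies wholly inside $N[x]$, so it is counted by $t$ but not by $s$, giving $s\le t-1$. Together these produce a maximal matching of $G\setminus N[x]$ of size at most $|M_x|+s=(\beta(G)-t)+s\le\beta(G)-1$. Verifying that $W$ covers all unblocked edges and that $s\le t-1$ is the delicate part; everything else is bookkeeping.
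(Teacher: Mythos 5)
Your proof is correct, but it follows a genuinely different route from the paper's. The paper proves Theorem \ref{thm.matching} as a \emph{covering} argument: given a minimum maximal matching $\{E_1,\dots,E_\beta\}$, it takes $G_i$ to be the subgraph of all edges meeting $E_i$, notes that maximality of the matching forces $\bigcup_i \E(G_i)=\E(G)$, shows each $\reg(R/I(G_i))=1$ (Lemma \ref{lem.reg1}), and concludes by the Kalai--Meshulam subadditivity bound (Corollary \ref{cor.ind2}). You instead run a vertex-deletion induction through the colon sequence \eqref{eq.simplereg}, which shifts all the work onto two combinatorial monotonicity statements for the edge domination number: $\beta(G\setminus x)\le\beta(G)$ always, and $\beta(G\setminus N[x])\le\beta(G)-1$ when $x$ is matched by a minimum maximal matching. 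I checked the delicate step: an unblocked edge $f$ of $G\setminus N[x]$ must meet some $e\in M\setminus M_x$ at a vertex of $f$, hence outside $N[x]$, and $e\neq\{x,v\}$ since the only $M$-edge through $x$ lies wholly in $N[x]$; so $e$ straddles and the meeting point lies in $W$, giving at most $s\le t-1$ added edges since distinct added edges use distinct vertices of $W$. That is sound. The trade-off: the paper's proof is shorter and generalizes immediately to the covering-type bounds of Theorems \ref{thm.Woodroofe} and \ref{thm.2collage}, but it leans on the nontrivial Theorem \ref{thm.ind2}; yours uses only the elementary exact sequence \eqref{eq.simplereg} and is closer in spirit to the inductions behind Theorems \ref{thm.starpacking} and \ref{thm.log}, at the cost of the straddling-edge bookkeeping. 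One cosmetic caveat: the asserted equality $\reg(I(G):x)=\reg(I(G\setminus N[x]))$ degenerates when $G\setminus N[x]$ is edgeless (the left side is $1$, the right side is the regularity of the zero ideal), but the inequality you actually need, $\reg(I(G):x)\le\beta(G)$, still holds there since $\beta(G)\ge 1$.
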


We can also recover the proof of Theorem \ref{thm.matching} as a direct consequence of an inductive bound, Corollary \ref{cor.ind2}. For that we first need a simple lemma.

\begin{lemma} \label{lem.reg1}
Let $G$ be a simple graph and assume that an edge $\{u,v\}$ forms a maximal matching of size 1 in $G$. Then
$$\reg(R/I(G)) = 1.$$
\end{lemma}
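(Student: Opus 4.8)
The plan is to read off the structure of $G$ from the hypothesis, dispose of the easy lower bound, and then derive the upper bound $\reg(R/I(G)) \le 1$ from the short exact sequence behind inequality (\ref{eq.simplereg}).

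First I would unwind the combinatorial hypothesis. Saying that the single edge $\{u,v\}$ is a \emph{maximal} matching means that no edge of $G$ is disjoint from $\{u,v\}$, i.e.\ every edge of $G$ meets $\{u,v\}$. In particular the set $W = X \setminus \{u,v\}$ spans no edge, so writing $A = N_G(u) \setminus \{v\}$ and $B = N_G(v) \setminus \{u\}$ one has
$$I(G) = (uv) + (ua : a \in A) + (vb : b \in B).$$
Since $\{u,v\}$ is an edge, $I(G) \neq 0$ and is generated in degree $2$, whence $\reg(R/I(G)) \ge 1$; equivalently, the induced subcomplex $\Delta(G)[\{u,v\}]$ is a pair of points, whose reduced $0$-th homology is nonzero, so Lemma \ref{lem:RegFromTopology} gives the same bound. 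It remains to prove the reverse inequality.

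For the upper bound I would apply (\ref{eq.simplereg}) with $h = u$ (so $d = 1$), giving $\reg(I(G)) \le \max\{\reg(I(G):u) + 1, \reg(I(G),u)\}$. A direct monomial computation shows $I(G):u = (v) + (a : a \in A)$, an ideal generated by distinct variables, whose Koszul resolution yields $\reg(I(G):u) = 1$. Likewise $(I(G),u) = (u) + vJ$, where $J = (b : b \in B)$ is generated by variables and involves neither $u$ nor $v$.

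The one point requiring care — the main (if modest) obstacle — is to verify $\reg(I(G),u) \le 2$. Here I would use that multiplication by the nonzerodivisor $v$ induces a graded isomorphism $J(-1) \xrightarrow{\ \sim\ } vJ$, so that $\reg(vJ) = \reg(J) + 1 = 2$ (as $\reg(J) = 1$), i.e.\ $\reg(R/vJ) = 1$. Since $u$ is a variable disjoint from the generators of $vJ$, splitting $(u)$ off via Theorem \ref{thm.ind2} gives $\reg(R/(I(G),u)) \le \reg(R/(u)) + \reg(R/vJ) = 0 + 1 = 1$, hence $\reg(I(G),u) \le 2$. Feeding $\reg(I(G):u) = 1$ and $\reg(I(G),u) \le 2$ into (\ref{eq.simplereg}) gives $\reg(I(G)) \le \max\{2,2\} = 2$, so $\reg(R/I(G)) \le 1$, completing the proof. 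The degenerate cases $A = \emptyset$ or $B = \emptyset$ only shrink the relevant ideals and do not affect the bound.
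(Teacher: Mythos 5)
Your proof is correct. It differs in execution from the paper's, which simply notes the case of a single edge is trivial and then inducts on the number of edges of $G$ via Theorem \ref{thm.ind11}, leaving the inductive step unspoken. You instead exploit the hypothesis structurally: maximality of the matching $\{\{u,v\}\}$ forces every edge to meet $\{u,v\}$, so $I(G) = (uv) + (ua : a \in A) + (vb : b \in B)$, and a single application of the exact sequence (\ref{eq.simplereg}) at $h = u$ reduces everything to ideals whose regularity you can compute outright --- $I(G):u$ is generated by variables, and $(I(G),u) = (u) + vJ$ is handled by the degree-shift isomorphism $vJ \cong J(-1)$ together with the Kalai--Meshulam bound of Theorem \ref{thm.ind2} (or, equivalently, the tensor-product-of-resolutions argument for variable-disjoint ideals, which the paper itself uses just before Theorem \ref{thm.ind3}). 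All steps check out, including the identification $I(G):u = (v) + (a : a \in A)$ and the degenerate cases $A = \emptyset$ or $B = \emptyset$. What your route buys is a closed, non-inductive argument with every regularity value made explicit; what the paper's route buys is brevity and uniformity with the surrounding inductive machinery, at the cost of leaving the reader to supply the actual inductive step.
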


\begin{proof} Since $\reg(R/I(G)) = \reg(I(G))-1$, it suffices to show that $\reg(I(G)) = 2$. The assertion is trivial if $G$ consists of exactly one edge $\{u,v\}$. We can then induct on the number of edges in $G$, making use of Theorem \ref{thm.ind11}.
\end{proof}

\begin{proof}[of Theorem \ref{thm.matching}]
Let $\beta = \beta(G)$ and let $\{E_1, \dots, E_\beta\}$ be a maximal matching in $G$. For each $E_i$, let $G_i$ be the subgraph of $G$ consisting of $E_i$ and all edges incident to its vertices. Since $\{E_1, \dots, E_\beta\}$ forms a maximal matching in $G$, we have $\bigcup_{i=1}^\beta \E(G_i) = \E(G)$. The result now follows from Corollary \ref{cor.ind2} and Lemma \ref{lem.reg1}.
\end{proof}

\begin{remark} If $G$ consists of disjoint edges, then the bound in Theorem \ref{thm.matching} becomes an equality. On the other hand, by taking $G$ to be the complement of a chordal graph (in which case, $\reg(I(G)) = 2$), one can make $\beta(G)+1$ arbitrarily larger than $\reg(I(G))$.
\end{remark}

A number of generalizations for Theorem \ref{thm.matching} has been developed. Woodroofe \cite{Russ} was the first to observe that giving a matching in a graph $G$ is a simple way to ``cover'' the edges of $G$ by subgraphs. One can also consider giving a matching in $G$ as a special way to ``pack'' edges in $G$. These ideas have been extended to give better bounds for graphs and to obtain bounds for hypergraphs in general.

Let $c(G)$ denote the minimum number of subgraphs in $G$ whose complements are chordal such that every edge in $G$ belong to at least one of those subgraphs. Woodroofe \cite[Lemma 1]{Russ} extends Theorem \ref{thm.matching} to give the following bound.

\begin{theorem} \label{thm.Woodroofe}
Let $G$ be a simple graph. Then
$$\reg(G) \le c(G) + 1.$$
\end{theorem}

Moradi and Kiani \cite[Theorem 2.1]{MK} improve the bound in Theorem \ref{thm.matching} for the class of vertex decomposable graphs. Define $\gamma(G)$ to be the maximum number of vertex disjoint paths of length at most 2 in $G$ such that paths of length 1 form an induced matching.

\begin{theorem} \label{thm.MK}
Let $G$ be a vertex decomposable graph. Then
$$\reg(G) \le \gamma(G) + 1.$$
\end{theorem}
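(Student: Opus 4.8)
The plan is to prove the bound by induction on the number of vertices of $G$, feeding the deletion–link recursion of Theorem \ref{thm.ind11} into two monotonicity statements for the invariant $\gamma$. First I would translate the recursion into graph language. Writing $\Delta = \Delta(G)$ for the independence complex, one has $\del_\Delta(v) = \Delta(G \setminus v)$ and $\link_\Delta(v) = \Delta(G \setminus N[v])$ for a single vertex $v$, so Theorem \ref{thm.ind11}(1) applied to the face $\sigma = \{v\}$ (here $d = 1$) reads
\begin{align*}
\reg(I(G)) \le \max\{\reg(I(G \setminus N[v])) + 1,\ \reg(I(G \setminus v))\}.
\end{align*}
When $G$ has at least one edge, $\Delta$ is not a simplex, so vertex decomposability supplies a shedding vertex $v$, and by definition both $\Delta(G \setminus v)$ and $\Delta(G \setminus N[v])$ are again vertex decomposable. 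The induction hypothesis then gives $\reg(I(G \setminus v)) \le \gamma(G \setminus v) + 1$ and $\reg(I(G \setminus N[v])) \le \gamma(G \setminus N[v]) + 1$, while the base case of an edgeless graph is trivial.

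It therefore suffices to establish the two inequalities $\gamma(G \setminus v) \le \gamma(G)$ and $\gamma(G \setminus N[v]) + 1 \le \gamma(G)$; substituting them into the displayed bound immediately yields $\reg(I(G)) \le \gamma(G) + 1$. The first is routine: a witnessing family of vertex-disjoint paths in $G \setminus v$ survives in $G$, and since $G \setminus v$ is the induced subgraph on a vertex set avoiding $v$, the induced-matching condition on the length-one paths is preserved verbatim in $G$. For the second I would take an optimal family $P$ in $G \setminus N[v]$, whose vertices lie in $X \setminus N[v]$, and extend it by one more path through $v$; such a path is automatically vertex-disjoint from $P$, and the only thing to arrange is that appending it does not destroy the induced-matching constraint.

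The hard part will be choosing this extra path, and this is exactly where the shedding hypothesis is needed. If $\deg(v) \ge 2$ I would append the length-two path $u - v - w$ for two neighbors $u, w \in N(v)$: being a length-two path it imposes no induced-matching condition, so $\gamma(G) \ge |P| + 1$. The delicate case is $\deg(v) = 1$, with unique neighbor $u$, where I claim the shedding property forces $\{u,v\}$ to be an isolated edge. Indeed, $v$ being a shedding vertex means every maximal independent set of $G \setminus v$ is maximal in $G$, which (as $N(v) = \{u\}$) forces $u$ to lie in \emph{every} maximal independent set of $G \setminus v$; a short argument shows this holds precisely when $u$ is isolated in $G \setminus v$, i.e. $N_G(u) = \{v\}$. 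Then appending the length-one path $\{u,v\}$ keeps the matching induced, since the isolated edge $\{u,v\}$ has no neighbors among the vertices of $P$. In either case $\gamma(G \setminus N[v]) + 1 \le \gamma(G)$, completing the induction.

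The only genuinely subtle point is this degree-one analysis — everything else is bookkeeping with the recursion and the definition of $\gamma$ — so I would spend the most care verifying the equivalence ``$u$ lies in every maximal independent set of $G \setminus v$'' $\iff$ ``$u$ is isolated in $G \setminus v$'' that underlies the isolated-edge reduction.
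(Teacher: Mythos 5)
Your proposal is correct, but note that the survey itself does not prove Theorem \ref{thm.MK} --- it only cites \cite[Theorem 2.1]{MK} --- so the only comparison available is with the inductive template of Section \ref{sec.induction}, which your argument instantiates faithfully (and, in essence, it follows the same route as Moradi and Kiani's original proof). Your translation of Theorem \ref{thm.ind11} is right: $\del_\Delta(v)=\Delta(G\setminus v)$ and $\link_\Delta(v)=\Delta(G\setminus N[v])$, so everything reduces to $\gamma(G\setminus v)\le\gamma(G)$ and $\gamma(G\setminus N[v])+1\le\gamma(G)$, and both monotonicity claims go through because an induced matching in an induced subgraph $G[Y]$ remains induced in $G$ (the matching edges lie inside $Y$, where the two induced subgraphs agree). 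The delicate step you flag is also correct: the shedding condition says every maximal independent set of $G\setminus v$ meets $N(v)$, so when $N(v)=\{u\}$ a neighbor $w\neq v$ of $u$ would yield a maximal independent set of $G\setminus v$ containing $w$, omitting $u$, and extendable by $v$ --- a contradiction --- whence $\{u,v\}$ is an isolated edge and appending it preserves the induced matching. Two small points to make the write-up airtight: (i) dispose of $\deg(v)=0$ explicitly --- an isolated vertex is never a shedding vertex once $G$ has an edge, since any maximal independent set of $G\setminus v$ then extends by $v$, so your two cases are exhaustive; (ii) in the base case $I(G)=(0)$ one should fix the convention for $\reg$ of the zero ideal (or start the induction at a single edge). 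Neither affects the substance of the argument.
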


The author and Woodroofe \cite{HW} extend this result to any graph making use of packing of stars in a graph. A \emph{star} consists a vertex $x$ as its \emph{center} and edges $xy_i$s incident to $x$ (the vertices $y_i$s are referred to as the \emph{outer} vertices of the star). A star is \emph{nondegenerate} if there are at least 2 edges incident to its center. A collection $P$ of stars in $G$ is called \emph{center-separated} if for any pair of stars $S_1$ and $S_2$ in $P$, at least 2 outer vertices and the center of $S_1$ are not contained in $S_2$ (and vice-versa). For a center-separated collection of nondegenerate stars $P$, let $\zeta_P$ denote the number of stars in $P$ plus the size of the remaining induced matching after deleting the vertices of $P$ from $G$. Let $\zeta(G)$ be the maximum value of $\zeta_P$ over all center-separated collection of nondegenerate stars $P$. It is not hard to see that $\zeta(G) \le \gamma(G)$.

\begin{theorem} \label{thm.starpacking}
Let $G$ be a simple graph. Then
$$\reg(G) \le \zeta(G) + 1.$$
\end{theorem}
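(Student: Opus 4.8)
The plan is to prove the bound by induction on the number of vertices of $G$, using the single-vertex form of the short exact sequence bound together with two monotonicity properties of the invariant $\zeta$. Throughout I write $\reg(G)$ for $\reg(I(G))$, as in the remark following Lemma \ref{lem:RegFromTopology}.

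First I would record the vertex-splitting inequality: for any vertex $x$ of $G$,
\[
\reg(I(G)) \le \max\{\reg(I(G \setminus N[x])) + 1,\ \reg(I(G \setminus x))\}.
\]
This is Theorem \ref{thm.ind11}(2) applied with the single vertex $V = \{x\}$ (so $d=1$), once one identifies the two auxiliary ideals. Indeed $I(G) : x = (y \mid y \in N(x)) + I(G \setminus N[x])$ and $I(G) + (x) = (x) + I(G \setminus x)$, and in each case the extra linear (variable) generators involve indeterminates disjoint from those occurring in the remaining edge ideal. Hence, exactly as in the computation of $\reg((x^E) \cap I(H \setminus E))$ preceding Theorem \ref{thm.ind3}, the linear part contributes nothing to the regularity, giving $\reg(I(G):x) = \reg(I(G \setminus N[x]))$ and $\reg(I(G)+(x)) = \reg(I(G \setminus x))$, which yields the displayed inequality.

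Next I would prove the two facts that let the induction absorb each branch: (a) $\zeta(G \setminus x) \le \zeta(G)$ for every vertex $x$; and (b) $\zeta(G \setminus N[x]) + 1 \le \zeta(G)$ whenever $\deg_G(x) \ge 2$. For (a), take an optimal center-separated collection of nondegenerate stars $P'$ in $G \setminus x$ with its remaining induced matching $M'$; the same stars stay nondegenerate and center-separated in $G$ (both conditions depend only on incidences among their own vertices), and since $x \notin V(P')$ the graph $(G \setminus x) \setminus V(P')$ is an induced subgraph of $G \setminus V(P')$, so $M'$ is still an induced matching there. Thus $\zeta_{P'}$ computed in $G$ is at least $\zeta(G \setminus x)$. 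For (b), start from an optimal $P'$ in $G \setminus N[x]$ and adjoin the star $S_x$ centered at $x$ with outer vertices $N(x)$; it is nondegenerate because $\deg_G(x) \ge 2$. Since $V(S_x) = N[x]$ is disjoint from $V(P') \subseteq X \setminus N[x]$, every star of $P'$ is vertex-disjoint from $S_x$, so the enlarged collection $P = P' \cup \{S_x\}$ is again center-separated; moreover $G \setminus V(P) = (G \setminus N[x]) \setminus V(P')$, so the leftover induced matching is unchanged. Hence $\zeta_P = \zeta_{P'} + 1 \ge \zeta(G \setminus N[x]) + 1$. With these in hand the induction runs as follows. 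In the base case every vertex has degree at most $1$, so $G$ is a disjoint union of edges and isolated vertices and $\reg(I(G))$ equals the number of edges plus $1$, which is exactly $\zeta(G)+1$ (take $P = \emptyset$ and all edges as the induced matching). Otherwise pick any $x$ with $\deg_G(x) \ge 2$, apply the vertex-splitting inequality, and bound each term by the inductive hypothesis on the strictly smaller graphs: the first term is at most $\reg(G \setminus N[x]) + 1 \le \zeta(G \setminus N[x]) + 2 \le \zeta(G)+1$ by (b), and the second is at most $\zeta(G \setminus x) + 1 \le \zeta(G)+1$ by (a).

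The main obstacle is step (b): one must verify that an optimal center-separated family for $G \setminus N[x]$ can always be augmented by a star at $x$ without violating the precise center-separation condition (at least two outer vertices and the center of each star lying outside the other) and without shrinking the leftover induced matching. This is where the hypotheses that the adjoined star be nondegenerate ($\deg_G(x) \ge 2$) and that $V(S_x) = N[x]$ be disjoint from $V(P')$ are used; checking that $G \setminus V(P)$ is literally the same induced subgraph whether one removes $N[x]$ first or last is the key bookkeeping point that makes both the center-separation clause and the induced-matching clause survive.
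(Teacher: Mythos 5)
Your proof is correct: the survey itself does not prove Theorem \ref{thm.starpacking} (it quotes it from \cite{HW}), but your argument --- inducting on the number of vertices via the splitting $\reg(I) \le \max\{\reg(I:x)+1, \reg(I,x)\}$ at a vertex $x$ of degree at least $2$, identifying $I:x$ and $(I,x)$ with the edge ideals of $G \setminus N[x]$ and $G \setminus x$ up to linear generators in disjoint variables, and verifying that $\zeta$ does not decrease under deletion of $x$ and increases by at least $1$ when passing from $G\setminus N[x]$ back to $G$ by adjoining the star at $x$ --- is exactly the inductive scheme of Theorem \ref{thm.ind11}(2) that the cited source uses. The two monotonicity checks (a) and (b), in particular the bookkeeping that $G \setminus V(P)$ is the same induced subgraph whether $N[x]$ is removed before or after $V(P')$ and that center-separation is a condition only on the stars' vertex sets, are the genuine content of the argument and you have them right.
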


\begin{example} Let $G$ be the graph in Figure \ref{fig:Graph}. It is easy to see that $\zeta(G) = 1$. For instance, the nondegenerate star centered at $b$ gives a maximal center-separated packing of stars in $G$, whose removal results in a subgraph of isolated vertices $\{a,c\}$. The bound in Theorem \ref{thm.starpacking} gives $\reg(I(G)) \le 2$. In fact, we have the equality for this example.
\end{example}

For hypergraphs the problem becomes more subtle. In particular, using the matching number to bound the regularity as in Theorem \ref{thm.matching} is no longer possible.

\begin{figure}[hbtf]
\centering
\includegraphics[height=1.5in]{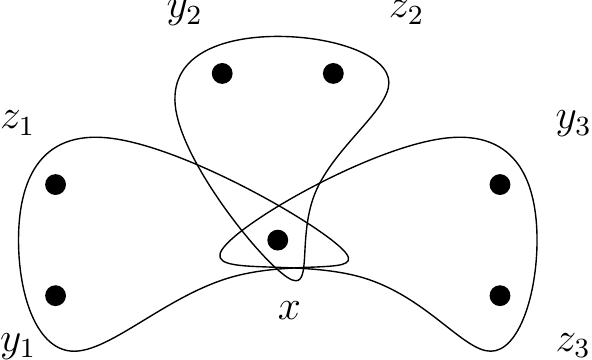}
\caption{A 3-uniform hypergraph with regularity greater than $(3-1) \cdot \beta$.} \label{fig:MatchingBdFailure}
\end{figure}

\begin{example} \label{ex.matching}
For $s>1$, consider the hypergraph $H_{s}$ with
edges $\{x,y_{i},z_{i}\}$ (for $i=1,\dots,s$)
on the vertex set $\{x,y_{1},\dots,y_{s},z_{1},\dots,z_{s}\}$. Figure~\ref{fig:MatchingBdFailure}
illustrates $H_{3}$. We have that the induced matching number and minimax
matching number of $H_{s}$ are both 1. On the other hand, it is
straightforward to compute that $\reg(I(H_{s}))=s+2$, which can
be taken to be arbitrarily far from $(3-1)\beta(H_{s})= 2$.
\end{example}

A generalization of Theorem \ref{thm.matching} is obtained in \cite[Theorem 1.2]{HW} replacing the notion of a matching by a 2-collage.

\begin{definition} Let $H$ be a simple hypergraph. A subset $C$ of the edges in $H$ is called a \emph{2-collage} if for each edge $E$ in $H$, there exists a vertex $v \in E$ such that $E \setminus \{v\}$ is contained in some edge of $C$. For a uniform hypergraph, this condition is equivalent to requiring that for any edge $E$ in $H$ there is an edge $F$ in $C$ such that the symmetric difference of $E$ and $F$ has cardinality exactly 2.
\end{definition}

\begin{theorem} \label{thm.2collage}
Let $H$ be a simple hypergraph, and let $\{E_1, \dots, E_s\}$ be a 2-collage in $H$. Then
$$\reg(H) \le \sum_{i=1}^s (|E_i|-1) + 1.$$
\end{theorem}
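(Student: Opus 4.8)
The plan is to prove the bound by induction, using the edge-splitting inequality of Theorem \ref{thm.ind3} for the inductive step and the covering bound of Corollary \ref{cor.ind2} for the base case. I would write $N = \sum_{i=1}^s (|E_i|-1)+1$ for the target value and induct on the pair (number of vertices, number of edges) of $H$ in lexicographic order. A key elementary observation I would record first is this: if $F$ is an edge of $H$ that is \emph{not} in the $2$-collage $C = \{E_1,\dots,E_s\}$, with witness vertex $v_F$ and witness edge $E_j$ (so $F \setminus \{v_F\} \subseteq E_j$), then $|E_j| \ge |F|$. Indeed, otherwise $E_j = F \setminus \{v_F\} \subsetneq F$, contradicting simplicity of $H$. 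Since $|E_j| \le N$, this also gives $|F| \le N$ for every non-collage edge.

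For the base case I would suppose that every edge of $H$ already lies in $C$, i.e. $\E(H) = \{E_1,\dots,E_s\}$. Covering $H$ by the single-edge subhypergraphs $H_i$ with $\E(H_i) = \{E_i\}$ and applying Corollary \ref{cor.ind2}, together with $\reg(R/(x^{E_i})) = |E_i|-1$, gives $\reg(R/I(H)) \le \sum_i (|E_i|-1)$, that is $\reg(H) \le N$.

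For the inductive step I would assume $H$ has an edge $F \notin C$, fix a witness $(v_F,E_j)$ as above, and apply Theorem \ref{thm.ind3} to $F$, giving $\reg(H) \le \max\{|F|,\ \reg(H\setminus F),\ \reg(H_F)+|F|-1\}$. It is essential to split on an edge \emph{outside} the collage: splitting on a collage edge can push the deletion term above $N$ already for the four-vertex path (where deleting the middle edge leaves two disjoint edges, of regularity exceeding $N=2$). The term $|F| \le N$ is handled by the observation above. For the deletion term, $C$ is still a $2$-collage of $H\setminus F$ (each surviving edge keeps its old witness, and $F$ no longer needs witnessing), and $H\setminus F$ has one fewer edge, so the inductive hypothesis yields $\reg(H\setminus F) \le N$. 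For the contraction term, the idea is to push $C$ forward to $X \setminus N[F]$: the nonempty sets $E_i \cap (X\setminus N[F])$ should give a $2$-collage $C_F$ of $H_F$ of total weight $\sum_{G\in C_F}(|G|-1) \le (N-1)-(|F|-1) = N-|F|$. The required weight drop of $|F|-1$ is forced by the single witness edge $E_j$: if $E_j$ survives the contraction it drops at least $|E_j \cap N[F]| \ge |E_j \cap F| \ge |F\setminus\{v_F\}| = |F|-1$, while if $E_j$ vanishes it removes weight $|E_j|-1 \ge |F|-1$, using $|E_j|\ge|F|$. Granting this, the inductive hypothesis applies to $H_F$ (which has strictly fewer vertices) and gives $\reg(H_F) \le N-|F|+1$, so $\reg(H_F)+|F|-1 \le N$, closing the induction.

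The hard part will be the contraction term, specifically upgrading the pushed-forward sets $E_i \cap (X\setminus N[F])$ to a \emph{genuine} $2$-collage of $H_F$. Two issues need care. First, a $2$-collage must consist of actual edges of $H_F$, yet $E_i \cap (X\setminus N[F])$ need not be a minimal edge of the contraction (a smaller contracted edge may sit inside it), so one must witness through, or substitute, the honest edges of $H_F$ without losing the weight estimate $N-|F|$. Second, one must check the witnessing property itself: each edge $G$ of $H_F$ comes from an edge $P$ of $H$ disjoint from $N(F)$, and I would show that the witness of $P$ in $C$ descends to a witness of $G$ after intersecting with $X\setminus N[F]$ (the clean case being $v_P \in N[F]$, where $G$ already lies inside a pushed-forward collage set). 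Tracking how edges merge and shrink under contraction while keeping the total weight at most $N-|F|$ is the delicate part of the argument; by comparison, the deletion term and the base case are routine consequences of the tools assembled in Section \ref{sec.induction}.
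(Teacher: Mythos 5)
Your overall architecture is sound and is essentially the one the tools of Section \ref{sec.induction} are built for (and the one used in the cited source \cite{HW}): induct with the base case $\E(H)=C$ handled by Corollary \ref{cor.ind2} together with $\reg(R/(x^{E_i}))=|E_i|-1$, and for the inductive step apply Theorem \ref{thm.ind3} to an edge $F\notin C$. Note that the survey itself does not prove Theorem \ref{thm.2collage} (it only cites \cite[Theorem 1.2]{HW}), so there is no in-paper proof to match line by line; judged on its own terms, your base case, your observation that simplicity forces $|E_j|\ge |F|$ (hence $|F|\le N$), and your treatment of the deletion term (with the correct remark that one must split on a \emph{non}-collage edge) are all correct and complete.

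The genuine gap is exactly where you say the ``hard part'' is, and it is not merely bookkeeping: you have not produced a $2$-collage of $H_F$ of weight at most $N-|F|$, and the naive push-forward you describe can fail in two concrete ways. First, if $E_k\setminus N[F]$ properly contains several incomparable edges of the contraction $H_F$ (e.g.\ $E_k=\{a,b,c,d\}$ with surviving edges $\{a,b\}$ and $\{c,d\}$ of $H_F$ both sitting inside it), then choosing one of them as the image of $E_k$ leaves the other without a witness, while choosing all of them can push the total weight above $|E_k\setminus N[F]|-1$ when they overlap. Second, when the witness edge $E_j$ \emph{vanishes} from $H_F$ (because it meets $N(F)\setminus F$), an edge $P$ that was witnessed only by $E_j$ may still survive into $H_F$, and its contracted image then has no witness among the pushed-forward sets at all; your weight accounting credits the loss of $E_j$ but does not re-witness such edges. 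So the statement ``$C_F$ is a genuine $2$-collage of $H_F$ of weight $\le N-|F|$'' is precisely the claim that carries the whole inductive step, and as written it is asserted rather than proved; closing it requires either a more careful choice of collage members in $H_F$ (witnessing each surviving edge through its preimage in $H$ and a case analysis on whether its witness vertex lies in $N[F]$) or a different reduction, and until that is done the argument does not go through.
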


Instead of packing a 2-collage in a hypergraph, the concept of \emph{edgewise domination} was introduced by Dao and Schweig \cite{DS}, as a covering invariant, to generalize Theorem \ref{thm.matching}.

\begin{definition}
A collection $C$ of edges in a hypergraph $H$ is called \emph{edgewise dominant} if every vertex $x \in H^{\textrm{red}}$ not contained in some edge of $C$ or contained in an isolated loop has a neighbor contained in some edge of $C$. Define
$$\epsilon(H) = \min \{ |C| ~\big|~ C \textrm{ is edgewise dominant}\}.$$
\end{definition}

\begin{theorem} \label{thm.edgedominant}
For any simple hypergraph $H$, we have
$$\reg(H) \le |X(H)| - \epsilon(H^\vee).$$
\end{theorem}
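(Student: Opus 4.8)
The plan is to push the statement through Alexander duality and then convert it into a lower bound on depth. By Theorem \ref{thm.Terai}, $\reg(H) = \pd(R/I(H^\vee))$. Writing $G = H^\vee$ and recalling that the Alexander dual is taken over the same vertex set, so that $|X(G)| = |X(H)|$, the asserted inequality becomes
$$\pd(R/I(G)) \le |X(G)| - \epsilon(G).$$
Since $R$ is a polynomial ring over a field, the Auslander--Buchsbaum formula gives $\pd(R/I(G)) + \operatorname{depth}(R/I(G)) = |X(G)|$, so this is in turn equivalent to the purely depth-theoretic statement
$$\operatorname{depth}(R/I(G)) \ge \epsilon(G).$$
Thus it suffices to prove that the edge ideal of \emph{any} simple hypergraph $G$ has depth at least its edgewise domination number.

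To prove this depth bound I would fix a minimum edgewise dominant collection $C = \{E_1, \dots, E_t\}$ with $t = \epsilon(G)$ and attempt to produce a regular sequence of $t$ linear forms on $R/I(G)$. The guiding principle is the explicit description of the nonzerodivisors: since $I(G)$ is squarefree, $\Ass(R/I(G))$ consists exactly of the primes $(x_v \mid v \in W)$ as $W$ ranges over the minimal vertex covers of $G$, equivalently over the complements of the facets of the independence complex $\Delta(G)$. A linear form $\theta$ is therefore a nonzerodivisor precisely when its support meets every maximal independent set of $G$. The role of the edgewise domination hypothesis is exactly to guarantee, at each stage of the construction, that one can choose the support of the next form (built from the vertices of the edges $E_i$ together with their neighbors) so as to meet every maximal independent set of the complex that remains after the previous quotients. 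Equivalently, one may phrase the argument inductively through the short exact sequence \eqref{eq.ses} with $h$ a variable, together with the standard depth inequality $\operatorname{depth}(R/I) \ge \min\{\operatorname{depth}(R/(I:x)), \operatorname{depth}(R/(I,x))\}$, tracking how $\epsilon$ behaves under deletion of a vertex and passage to $I:x$, and peeling off isolated vertices (which are automatically nonzerodivisors) at the outset.

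The main obstacle is the verification that the chosen forms genuinely extend to a full regular sequence, that is, controlling $\Ass$ after each successive quotient. The difficulty is that the edges of an edgewise dominant collection need not be disjoint, so neither a single variable nor a naive edge-sum $\sum_{v \in E_i} x_v$ is, by itself, guaranteed to be a nonzerodivisor: its support may fail to meet some maximal independent set. One must invoke the domination condition quantitatively at every step, showing that after quotienting by the forms already constructed the remaining complex still has all of its maximal faces met by the support of the next form. Book-keeping the behaviour of $\epsilon$, of the reduced hypergraph $G^{\textrm{red}}$, and of any isolated loops under these operations, so that the induction closes with exactly $t$ regular elements, is where the real work lies; this is essentially the content of the independence-complex-homology analysis of Dao and Schweig.
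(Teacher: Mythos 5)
Your opening reduction is correct and is in fact the same first move the literature makes: by Theorem \ref{thm.Terai} we have $\reg(H) = \pd(R/I(G))$ for $G = H^\vee$, and Auslander--Buchsbaum converts the claim into $\operatorname{depth}(R/I(G)) \ge \epsilon(G)$. The problem is that everything after that is a plan rather than a proof, and the plan stops exactly where the theorem's content begins. The paper's route (via \cite[Lemma 3.3]{DS}, restated here as Theorem \ref{thm.hereditary}) is a projective-dimension induction through the operations $H + V$ and $H : V$ over a hereditary family of hypergraphs, and the entire proof of Theorem \ref{thm.edgedominant} consists of verifying that $f = \epsilon$ satisfies the hypotheses of that lemma --- that is, exhibiting, for each hypergraph with an edge of cardinality at least $2$, subsets $A_1, \dots, A_t$ along which $\epsilon$ does not drop too much under contraction and such that isolated vertices appear at the last step. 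You never engage with this verification; your closing sentence (``this is essentially the content of the independence-complex-homology analysis of Dao and Schweig'') defers precisely the step that needs to be proved.

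Moreover, your primary proposed mechanism --- an explicit regular sequence of $\epsilon(G)$ linear forms supported on the dominating edges and their neighbors --- has a concrete obstruction that you name but do not overcome: the description of nonzerodivisors via minimal vertex covers (equivalently, facets of $\Delta(G)$) is available only for the \emph{first} form, because after quotienting by one linear form the ring is no longer a Stanley--Reisner ring and $\Ass$ of the successive quotients admits no comparable combinatorial description. Over an infinite field, $\operatorname{depth} \ge t$ is indeed equivalent to the existence of a regular sequence of $t$ linear forms, but that equivalence gives no way to \emph{produce} the sequence from the dominating collection; generic forms witness the depth only after the depth bound is already known, which is circular. To salvage the argument, abandon the regular-sequence picture and carry out the induction you mention only in passing: use $\pd(R/I) \le \max\{\pd(R/(I:x)), \pd(R/(I,x))\}$ and prove explicitly how $\epsilon$, the reduced hypergraph, and the isolated vertices behave under deletion and contraction so that the induction closes. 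That bookkeeping is not a detail to be cited --- it is the theorem.
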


To prove Theorem \ref{thm.edgedominant}, the inductive method of Theorem \ref{thm.ind11} was put in a more general perspective by considering a hereditary family of hypergraphs that allows one to go from a given hypergraph $H$ to the deletion and contraction of $H$ at a vertex.

\begin{definition} Let $\Phi$ be a collection of simple hypergraphs. We say that $\Phi$ is \emph{hereditary} if for any hypergraph $H \in \Phi$ and any subset $V$ of the vertices of $H$, $H^{\textrm{red}}$, $H + V$ and $H : V$ are all in $\Phi$.
\end{definition}

The proof of Theorem \ref{thm.edgedominant} is based on the following more general version of Theorem \ref{thm.ind11} (see \cite[Lemma 3.3]{DS}).

\begin{theorem} \label{thm.hereditary}
Let $\Phi$ be a hereditary family of simple hypergraphs, and let $f: \Phi \rightarrow \NN$ be a function such that $f(H^{\textrm{red}}) = f(H)$ for all $H \in \Phi$, $f(H) \le |X(H)|$ when $H$ contains no edges, and $f(H) = 0$ when $H$ has only isolated loops. Furthermore, suppose that $f$ satisfies the following condition: for any $H \in \Phi$ with at least one edge of cardinality $\ge 2$, there exists a sequence of subsets $A_1, \dots, A_t$ of the vertices such that, writing $H_i$ for the hypergraph corresponding to $H + \sum_{j=1}^i A_j$, the following two properties are satisfied.
\begin{itemize}
\item $|\textrm{is}(H_t)| > 0$ and $f(H_t) + |\textrm{is}(H_t)| \ge f(H)$, and
\item for each $i$, $f(H_{i-1} : A_i) + |\textrm{is}(H_{i-1})| + |A_i| \ge f(C)$.
\end{itemize}
Then for any $H \in \Phi$, we have
$$\reg(H^\vee) = \pd(H) \le |X(H^{\textrm{red}})| - f(H).$$
\end{theorem}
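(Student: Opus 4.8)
The equality $\reg(H^\vee) = \pd(H)$ I would dispose of first: it is an immediate application of Terai's theorem (Theorem \ref{thm.Terai}) to the ideal $I(H^\vee)$, since $\vee$ is an involution, giving $\reg(I(H^\vee)) = \pd(R/I(H^\vee)^\vee) = \pd(R/I(H))$. So the real content is the inequality $\pd(H) \le |X(H^{\textrm{red}})| - f(H)$, where I write $\pd(H) = \pd(R/I(H))$. My plan is to prove this by strong induction on the number $|X(H^{\textrm{red}})|$ of non-isolated vertices. First I would note that both sides are unchanged under $H \mapsto H^{\textrm{red}}$: adjoining variables not occurring in $I(H)$ leaves the minimal free resolution (hence $\pd$) unchanged, $f$ is invariant by hypothesis, and $|X(H^{\textrm{red}})|$ is literally the same; and $H^{\textrm{red}} \in \Phi$ since $\Phi$ is hereditary. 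So I may assume $H = H^{\textrm{red}}$. After this reduction set $X := X(H)$, so that $|X| = |X(H^{\textrm{red}})|$, the hypergraph $H$ has no isolated vertices, and every hypergraph $K$ that arises below lives on the ambient set $X$ with $|X(K^{\textrm{red}})| = |X| - |\textrm{is}(K)|$.

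The base case is when $H$ has no edge of cardinality $\ge 2$: being reduced, $H$ then consists solely of isolated loops, $I(H)$ is generated by the variables carrying those loops, $\pd(H)$ equals their number $= |X|$, and $f(H) = 0$ by hypothesis, so the bound holds with equality (the edgeless case $|X| = 0$, which forces $f(H) = 0$, is subsumed). For the inductive step I would take the sequence $A_1,\dots,A_t$ furnished by the hypothesis (applied to $H = H^{\textrm{red}}$, which still has an edge of cardinality $\ge 2$) and feed the short exact sequence (\ref{eq.ses}), with $I = I(H_{i-1})$ and $h = x^{A_i}$, into the long exact sequence of $\Tor$. Since $I(H_{i-1}):x^{A_i} = I(H_{i-1}:A_i)$ and $I(H_{i-1})+x^{A_i} = I(H_i)$, and a degree shift does not affect projective dimension, this yields $\pd(H_{i-1}) \le \max\{\pd(H_{i-1}:A_i),\pd(H_i)\}$. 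Telescoping from $i=1$ to $t$ (with $H_0 = H$) gives
$$\pd(H) \le \max\Big\{\pd(H_t),\ \max_{1\le i\le t}\pd(H_{i-1}:A_i)\Big\},$$
so it suffices to bound each term by $|X| - f(H)$.

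For $\pd(H_t)$, the condition $|\textrm{is}(H_t)| > 0$ makes the induction parameter $|X(H_t^{\textrm{red}})| = |X| - |\textrm{is}(H_t)|$ strictly smaller than $|X|$, so the inductive hypothesis and the first bulleted inequality give $\pd(H_t) \le |X| - |\textrm{is}(H_t)| - f(H_t) \le |X| - f(H)$. For $\pd(H_{i-1}:A_i)$, I would use that passing to the colon strips the vertices of $A_i$ from the support of the ideal while leaving the already-isolated vertices isolated, so $|\textrm{is}(H_{i-1}:A_i)| \ge |\textrm{is}(H_{i-1})| + |A_i| \ge |A_i| \ge 1$; again the parameter drops strictly, and the inductive hypothesis together with the second bulleted inequality gives $\pd(H_{i-1}:A_i) \le |X| - |\textrm{is}(H_{i-1}:A_i)| - f(H_{i-1}:A_i) \le |X| - f(H)$. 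This completes the induction.

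The hard part will be the combinatorial bookkeeping with isolated vertices of the colon hypergraph $H_{i-1}:A_i$: establishing $|\textrm{is}(H_{i-1}:A_i)| \ge |\textrm{is}(H_{i-1})| + |A_i|$ rests on the explicit description of $I(H_{i-1}):x^{A_i}$ as generated by the monomials $x^{E\setminus A_i}$, whose supports avoid $A_i$, and on checking that $A_i$ may be taken disjoint from $\textrm{is}(H_{i-1})$ so that the count is genuinely additive rather than a mere union bound. One must also dispose of the degenerate case in which some edge is contained in $A_i$ (then $I(H_{i-1}):x^{A_i}$ is the unit ideal and that term simply drops out), and keep careful track of the fact that the reduction $H = H^{\textrm{red}}$ is what makes the $H_t$-term strictly smaller in the induction parameter. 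Matching this bookkeeping precisely against the two hypotheses on $f$ is the crux; the homological input is just the depth/short-exact-sequence machinery already recorded in Section \ref{sec.induction}.
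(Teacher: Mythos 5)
The paper does not actually prove this theorem: it is quoted from Dao--Schweig \cite[Lemma 3.3]{DS}, so there is no in-paper argument to compare against. Your reconstruction is exactly the intended one --- induct on $|X(H^{\textrm{red}})|$ after reducing to $H = H^{\textrm{red}}$, kill the base case with the Koszul complex on the isolated loops, and telescope the inequality $\pd(R/I) \le \max\{\pd(R/(I:h)), \pd(R/(I+h))\}$ coming from the long exact sequence of $\Tor$ applied to (\ref{eq.ses}) along the sequence $A_1,\dots,A_t$ --- and the way you match the two bulleted hypotheses against the two types of terms in the resulting maximum is correct. You have also correctly located the only delicate point: closing the $\pd(H_{i-1}:A_i)$ case needs $|\textrm{is}(H_{i-1}:A_i)| \ge |\textrm{is}(H_{i-1})| + |A_i|$, and since $\textrm{is}(H_{i-1}:A_i) \supseteq \textrm{is}(H_{i-1}) \cup A_i$ only gives a union bound, this requires $A_i \cap \textrm{is}(H_{i-1}) = \emptyset$. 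Be aware that the hypothesis as transcribed in the survey does not literally guarantee this (and you cannot simply shrink $A_i$, since that changes $H_i$ and hence all subsequent terms); in the source the $A_i$ are taken among non-isolated vertices, so either impose that harmless normalization explicitly or restate the second bullet with $|\textrm{is}(H_{i-1}:A_i)|$ in place of $|\textrm{is}(H_{i-1})| + |A_i|$. Two trivial cleanups: read the ``$f(C)$'' in the statement as $f(H)$ (a typo), and note, as you do, that if some edge of $H_{i-1}$ lies inside $A_i$ then $I(H_{i-1}):x^{A_i} = R$ and that term drops out of the maximum.
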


This technique, restricted to graphs \cite{DHS}, gives the following numerical bound for the regularity (see \cite[Theorem 4.1]{DHS}).

\begin{theorem} \label{thm.log}
Let $G$ be a graph. Assume that the resolution of $I(G)$ is linear up to the $k$-th step for some $k \ge 1$, and let $d$ be the maximum degree of a vertex in $G$. Then
$$\reg(G) \le \log_{\frac{k+4}{2}}\left(\dfrac{d}{k+1}\right)+3.$$
\end{theorem}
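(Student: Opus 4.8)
The plan is to rephrase the desired inequality as a lower bound on the maximum degree. Writing $r = \reg(R/I(G)) = \reg(G) - 1$, the claimed bound $\reg(G) \le \log_{(k+4)/2}(d/(k+1)) + 3$ is equivalent to $d \ge (k+1)\big(\tfrac{k+4}{2}\big)^{r-2}$. So it suffices to show that whenever the resolution of $I(G)$ is linear up to the $k$-th step and $\reg(R/I(G)) = r$, the graph $G$ must contain a vertex whose degree is at least $(k+1)\big(\tfrac{k+4}{2}\big)^{r-2}$. I would prove this by induction on $r$, which is exactly the kind of vertex-splitting induction packaged abstractly in Theorem \ref{thm.hereditary}; concretely it is cleanest to run the induction by hand using the short exact sequence behind Theorem \ref{thm.ind11} together with the refinement in Remark \ref{rem.1of2}.

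For the inductive step, fix $G$ with $\reg(R/I(G)) = r \ge 3$. Applying Remark \ref{rem.1of2} to the variable $x_v$ of a well-chosen vertex $v$, the regularity of $R/I(G)$ equals either $\reg(R/(I(G) : x_v))$ or $\reg(R/(I(G), x_v))$. Translating these colon and sum ideals back to graphs, $R/(I(G):x_v)$ is, up to the now-free variable $x_v$, the edge ring of the induced subgraph $G \setminus N[v]$ obtained by deleting the closed neighbourhood of $v$, while $R/(I(G), x_v)$ is the edge ring of $G \setminus v$. I would choose $v$ so that the regularity is governed by the first alternative, forcing $\reg(R/I(G \setminus N[v])) = r - 1$; passing to the closed-neighbourhood deletion is what makes the regularity drop by exactly one. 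The key bookkeeping point is that the hypothesis ``linear up to step $k$'' must be inherited, possibly with a controlled shift, by the relevant induced subgraph, so that the inductive hypothesis applies to $G \setminus N[v]$ and yields a vertex $w \notin N[v]$ with $\deg_{G \setminus N[v]}(w) \ge (k+1)\big(\tfrac{k+4}{2}\big)^{r-3}$.

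The heart of the argument, and the step I expect to be hardest, is the degree amplification: upgrading the guaranteed degree $D := (k+1)\big(\tfrac{k+4}{2}\big)^{r-3}$ of $w$ in $G \setminus N[v]$ to a degree of at least $\tfrac{k+4}{2}\,D$ somewhere in $G$. Since $w$ is non-adjacent to $v$ and to all of $N(v)$, I would analyse the induced subgraph on $w$, its $\ge D$ neighbours, $v$ and the neighbours of $v$, and feed it into Hochster's formula as in Lemma \ref{lem:RegFromTopology}. The constraint that $\beta_{i,i+2}(I(G))$ are the only nonzero Betti numbers for $i \le k$ forces the nonvanishing reduced homology $\tilde{H}_j(\Delta(G)[S])$ to appear only in a narrow range, which in turn forbids certain sparse induced configurations near $w$ and $v$. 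Converting this homological vanishing into an extremal edge-count is where the multiplicative factor $\tfrac{k+4}{2}$ is produced; this is the genuinely delicate combinatorics, and pinning down the constant exactly (rather than merely some function of $k$) is the crux.

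Finally, for the base case $r = 2$ one must show directly that a resolution which is linear through step $k$ but not globally linear, so that $\reg(I(G)) = 3$, forces a vertex of degree at least $k+1$: the first non-linear Betti number occurs at homological position $> k$, and unwinding Hochster's formula at that spot produces an induced subgraph certifying such a vertex. Assembling the base case with the inductive amplification gives the recursion $d(r) \ge \tfrac{k+4}{2}\,d(r-1)$ with $d(2) \ge k+1$, whose solution is exactly $d \ge (k+1)\big(\tfrac{k+4}{2}\big)^{r-2}$; taking logarithms yields the claimed bound.
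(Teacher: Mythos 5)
Your reduction of the inequality to the statement ``$\reg(R/I(G))=r$ forces a vertex of degree at least $(k+1)\bigl(\tfrac{k+4}{2}\bigr)^{r-2}$'' is correct, and the outer skeleton is sound: linearity up to step $k$ does pass to induced subgraphs (by Hochster's formula, with no shift needed, so your hedge ``possibly with a controlled shift'' can be removed), the existence of a vertex $v$ with $\reg(R/I(G\setminus N[v]))\ge r-1$ can be extracted from Theorem \ref{thm.ind11} by a secondary induction on the number of vertices, and the base case $r=2$ does work: by the Eisenbud--Green--Hulek--Popescu characterization, linearity up to step $k$ means $G^c$ has no induced cycle of length $4,\dots,k+3$, while $\reg(R/I(G))\ge 2$ means $G^c$ is not chordal (Theorem \ref{thm.Froberg}), so $G^c$ contains an induced $C_m$ with $m\ge k+4$ and hence $G$ contains an induced $C_m^c$, each of whose vertices has degree $m-3\ge k+1$.

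The problem is that the entire content of the theorem sits in the step you label ``the heart of the argument'' and then do not carry out. You assert that a vertex $w$ of degree $D=(k+1)\bigl(\tfrac{k+4}{2}\bigr)^{r-3}$ in $G\setminus N[v]$ can be upgraded to a vertex of degree $\tfrac{k+4}{2}D$ in $G$, but you give no mechanism for this: deleting $N[v]$ only removes edges, so nothing about the construction so far produces a vertex of \emph{larger} degree, and the multiplicative constant $\tfrac{k+4}{2}$ has to be manufactured from the forbidden-induced-cycle structure of $G^c$ by a genuinely nontrivial extremal argument (this is the content of the technical lemmas in \cite[Section 4]{DHS}). A gesture at ``feeding the configuration into Hochster's formula'' does not suffice; already for $k=1$ and $r=3$ one must show that a gap-free graph with $\reg(R/I(G))=3$ has a vertex of degree at least $5$, and nothing in your outline produces that. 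Your proposal is therefore a plausible reduction to the key lemma, not a proof. Note also that the survey itself does not prove this theorem; it cites \cite[Theorem 4.1]{DHS} and frames it as an instance of the hereditary-family method of Theorem \ref{thm.hereditary} applied on the projective-dimension side of Alexander duality, which is a somewhat different packaging from your direct regularity induction, but the missing amplification lemma is unavoidable in either formulation.
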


\begin{remark} It is not clear how strong the bound in Theorem \ref{thm.log} is in comparison to other known bounds using packing and covering invariants. 
\end{remark}

Lin and McCullough, in \cite{LM}, introduced the notion of \emph{labeled hypergraphs} that, in complement to the well-studied constructions of Stanley-Reisner ideals and edge ideals, also gives a one-to-one correspondence to squarefree monomial ideals, and used this notion to study the regularity of squarefree monomial ideals. Note that the \emph{unlabeled} version of their construction coincides with the \emph{dual hypergraph} notion in hypergraph theory (cf. \cite{Berge}). In the same spirit of employing induction, Lin and McCullough \cite[Theorem 4.9]{LM} obtain the following bound. We shall rephrase their result in terms of edge ideals of hypergraphs.

An edge $E$ of a simple hypergraph $H$ is said to contain a \emph{free vertex} if there exists $x \in E$ such that $x$ does not belong to any other edge in $H$.

\begin{theorem} \label{thm.labelhypergraph}
Let $H = (X,\E)$ be a simple hypergraph, and let $H' = (X, \E')$ be the hypergraph obtained by removing all edges with free vertices from $H$. Let $\beta(H')$ be the matching number of $H'$. Then
$$\reg(H) \le |X| - |\E| + |\E'| - \beta(H') + 1.$$
\end{theorem}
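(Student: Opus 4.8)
The plan is to prove the bound by induction on $|X|$, using the short exact sequence (\ref{eq.ses}) on a single free vertex in the spirit of Theorem \ref{thm.ind11}. Assume $H$ has an edge $E$ containing a free vertex $x$, and write $E = \{x\} \cup E'$ with $x \notin E'$. Since $x$ divides only the generator $x^E$ of $I(H)$, one computes directly that $(I(H),x) = (x) + I(H \setminus E)$ and $I(H):x = (x^{E'}) + I(H \setminus E)$. Deleting the now-isolated variable $x$ and invoking (\ref{eq.simplereg}) gives
\[
\reg(H) \le \max\{\reg(H \setminus E),\ \reg(H_2) + 1\},
\]
where $H_2$ is the simple hypergraph on $X \setminus \{x\}$ obtained from $H \setminus E$ by adjoining the shrunken edge $E'$. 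Writing $B(H) = |X| - |\E| + |\E'| - \beta(H') + 1$ for the claimed bound, the induction closes provided $B(H \setminus E) \le B(H)$ (the deletion branch) and $B(H_2) + 1 \le B(H)$ (the contraction branch).

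The deletion branch is the easy one. Because $E$ has a free vertex it is not an edge of $H'$, so deleting it removes no edge of $H'$ directly; moreover deleting an edge can only \emph{create} free vertices, so $\E'$ can only shrink, say by $\delta$ edges, and then the matching number of $H'$ drops by at most $\delta$. Substituting $|X|-1$, $|\E|-1$, $|\E'|-\delta$ into $B$ and using $\beta(H') - \beta\big((H\setminus E)'\big) \le \delta$ yields $B(H \setminus E) \le B(H)$. Since by Remark \ref{rem.1of2} one knows \emph{a priori} that $\reg(H)$ equals exactly one of the two branches, it remains only to control the contraction branch.

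The contraction branch is where the real difficulty lies. Replacing $E$ by $E' = E \setminus \{x\}$ leaves the degree of every vertex other than $x$ unchanged. Consequently the obstruction localizes: if $E$ has a second free vertex, then $E'$ still has a free vertex in $H_2$, so $H_2' = H'$ and a short computation gives $B(H_2)+1 \le B(H)$ for free. The genuine difficulty is when $x$ is the \emph{unique} free vertex of $E$: then $E'$ becomes a new edge of $H'$, so $|\E'(H_2)| = |\E'|+1$, while $\beta(H_2')$ exceeds $\beta(H')$ by $0$ or $1$ according to whether $E'$ can be added to a maximum matching of $H'$. One checks that $B(H_2)+1 \le B(H)$ holds precisely when $\beta(H_2') = \beta(H')+1$; when $E'$ meets every maximum matching of $H'$ the naive estimate fails by exactly one. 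This is the main obstacle. I would attack it by strengthening the inductive hypothesis — fixing a maximum matching $M'$ of $H'$ in advance and choosing, at each stage, the free-vertex edge $E$ so that $E'$ extends $M'$, and, when no such choice exists, using the sharp dichotomy of Remark \ref{rem.1of2} to argue that the offending branch cannot be the one realizing $\reg(H)$.

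An alternative and conceptually cleaner route is to pass to the Alexander dual via Theorem \ref{thm.Terai}. Since $\reg(H) = \pd(R/I(H)^\vee)$ and Auslander--Buchsbaum gives $\pd(R/I(H)^\vee) = |X| - \operatorname{depth}(R/I(H)^\vee)$, the asserted inequality is equivalent to the depth estimate
\[
\operatorname{depth}\big(R/I(H)^\vee\big) \ge \big(|\E| - |\E'|\big) + \beta(H') - 1 .
\]
In this form each free-vertex edge and each edge of a maximum matching of $H'$ ought to contribute one term to a regular sequence on $R/I(H)^\vee$. This is essentially the accounting Lin and McCullough perform on the \emph{labeled hypergraph} (the dual hypergraph), where deleting or contracting a vertex tracks $|X|$, $|\E|-|\E'|$ and $\beta(H')$ transparently and sidesteps the matching-number bookkeeping that obstructs the direct edge-ideal induction above. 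Producing such a regular sequence of the required length — equivalently, running the labeled-hypergraph induction to completion — is the technical heart of the proof.
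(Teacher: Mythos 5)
The survey itself does not prove Theorem \ref{thm.labelhypergraph}: it only restates \cite[Theorem 4.9]{LM} in edge-ideal language, so there is no in-paper argument to compare against, and your proposal has to stand on its own. As written it does not: it is an outline with two genuine holes, both of which you flag but neither of which you close. First, the whole induction is driven by choosing an edge $E$ with a free vertex, yet you never treat the case in which no edge of $H$ has a free vertex. There $\E'=\E$ and the claim collapses to $\reg(H)\le |X|-\beta(H)+1$ with $\beta$ the maximum matching number; this is precisely the substantive content of the theorem, not a degenerate base case, and your induction cannot even start on such hypergraphs (which persist under deletion and contraction, so you cannot assume they never arise).

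Second, in the colon branch the obstruction you identify is real and your proposed repairs do not obviously work. When $x$ is the unique free vertex of $E$ and $E'$ meets every maximum matching of $H'$, your target inequality $B(H_2)+1\le B(H)$ fails by exactly one. Remark \ref{rem.1of2} only guarantees that $\reg(I)$ equals \emph{one} of $\reg(I:x)+1$ and $\reg(I,x)$, with no criterion for which, so it gives you no license to discard the offending branch; and ``fixing a maximum matching of $H'$ in advance'' is not a well-posed strengthening, since after a deletion or contraction step the fixed matching need not remain a matching of the new $H'$, let alone a maximum one. (There is also a smaller unexamined wrinkle: if $E'=E\setminus\{x\}$ is contained in another edge, minimalizing $H_2$ changes $|\E|$ and upsets your bookkeeping.) The closing paragraph, passing to the Alexander dual via Theorem \ref{thm.Terai} and Auslander--Buchsbaum and exhibiting a regular sequence of length $(|\E|-|\E'|)+\beta(H')-1$, is indeed the shape of Lin and McCullough's actual argument on the labeled (dual) hypergraph, but as you concede, constructing that regular sequence is the entire technical content, and it is not supplied here.
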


\begin{figure}[h!]
\centering
\includegraphics[height=1.5in]{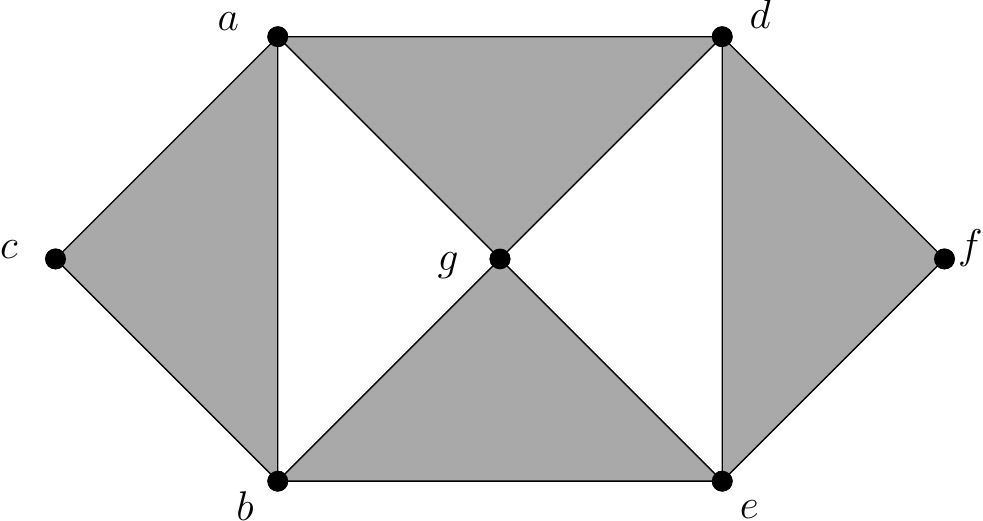}
\caption{Comparing bounds in Theorems \ref{thm.2collage}, \ref{thm.edgedominant} and \ref{thm.labelhypergraph}.} \label{fig:Example2}
\end{figure}

\begin{remark}
It is not quite clear which bounds among Theorems \ref{thm.2collage}, \ref{thm.edgedominant} and \ref{thm.labelhypergraph} are best in general. The following examples were given in \cite{LM} to illustrate this.
\end{remark}

\begin{example} Let $I = (abc, def, adg, beg)$ be the edge ideal of the hypergraph in Figure \ref{fig:Example2}. Then the bounds for $\reg(I)$ in Theorems \ref{thm.2collage} and \ref{thm.edgedominant}, respectively, are 9 and 6. On the other hand, the bound for $\reg(I)$ in Theorem \ref{thm.labelhypergraph} is 5.
\end{example}

\begin{figure}[h!]
\centering
\includegraphics[height=1.5in]{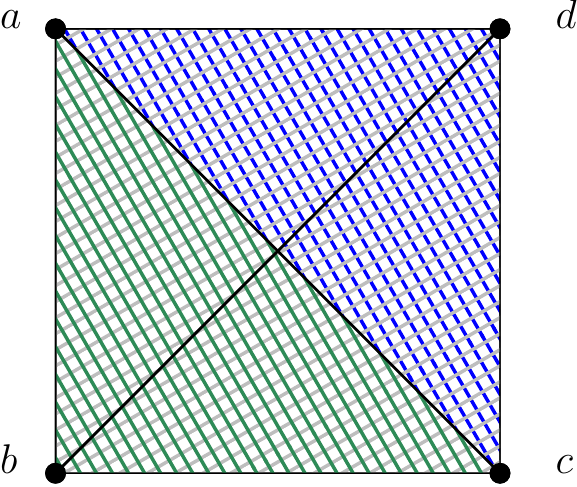}
\caption{Comparing bounds in Theorems \ref{thm.2collage}, \ref{thm.edgedominant} and \ref{thm.labelhypergraph}.} \label{fig:Example3}
\end{figure}

\begin{example} Let $I = (abc, abd, acd, bcd)$ be the edge ideal of the hypergraph in Figure \ref{fig:Example3}. Then the bounds for $\reg(I)$ in Theorems \ref{thm.2collage} and \ref{thm.edgedominant} are both 3, while the bound using labeled hypergraph in Theorem \ref{thm.labelhypergraph} is 4.
\end{example}


\section{Small regularity and computing regularity} \label{sec.compute}

In this section we survey studies that explicitly compute the regularity for special classes of squarefree monomial ideals, and identify combinatorial structures that force the ideals to have small regularity. Ideals of regularity 0 are trivial. A squarefree monomial ideal has regularity 1 if and only if it is generated by a collection of variables. Thus, we shall only consider ideals with regularity at least 2.

The following result was originally stated and proved by Wegner \cite{Wegner} using topological language, and re-stated in terms of monomial ideals by Fr\"oberg \cite{Fr} (see also \cite{EGHP, HVT2005}). It is in fact a direct consequence of Lemma \ref{lem:RegFromTopology}.

\begin{theorem} \label{thm.Froberg}
Let $G$ be a simple graph. Then $\reg(I(G)) = 2$ if and only if $G^c$ is a chordal graph.
\end{theorem}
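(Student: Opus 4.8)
The plan is to translate the statement into the topology of the independence complex $\Delta = \Delta(G)$ and then read off the answer from Lemma~\ref{lem:RegFromTopology}. Recall that $I(G) = I_{\Delta(G)}$ by Lemma~\ref{lem.IndComp}, and that the faces of $\Delta(G)$ are precisely the independent sets of $G$, i.e. the cliques of $G^c$; thus $\Delta(G)$ is the clique (flag) complex of $G^c$, and for any vertex subset $S$ the induced subcomplex $\Delta(G)[S]$ is exactly the clique complex of the induced subgraph $G^c[S]$. Since $\reg(I(G)) = \reg(R/I(G)) + 1$, and $\reg(R/I(G)) \ge 1$ whenever $G$ has at least one edge (an edge gives a degree-$2$ minimal generator), the equality $\reg(I(G)) = 2$ is equivalent to $\reg(R/I(G)) < 2$. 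By Lemma~\ref{lem:RegFromTopology} in the case $d = 2$, this is in turn equivalent to the vanishing $\tilde{H}_1(\Delta(G)[S]) = 0$ for every $S \subseteq X$. So the theorem reduces to the purely topological equivalence: $G^c$ is chordal if and only if the clique complex of every induced subgraph of $G^c$ has trivial first reduced homology.

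For the direction ``$G^c$ not chordal $\Rightarrow \reg(I(G)) \ge 3$'', suppose $G^c$ contains an induced cycle $C$ of length $\ge 4$. Taking $S = V(C)$, the induced subgraph $G^c[S]$ equals $C$ itself, which has no chords, so its clique complex consists precisely of the vertices and edges of $C$; this complex is homeomorphic to a circle, whence $\tilde{H}_1(\Delta(G)[S]) \ne 0$. Lemma~\ref{lem:RegFromTopology} then forces $\reg(R/I(G)) \ge 2$, that is, $\reg(I(G)) \ge 3 \ne 2$.

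For the converse I would exploit that chordality is hereditary under taking induced subgraphs, so it suffices to prove that the clique complex of \emph{any} chordal graph has vanishing first reduced homology and then apply this to every $G^c[S]$. Here I would induct on the number of vertices using a \emph{simplicial vertex} $v$ (one whose neighborhood is a clique), which every chordal graph possesses via a perfect elimination ordering. The closed star of $v$ in the clique complex is the simplex on $N[v]$ and the link of $v$ is the simplex on $N(v)$, both contractible; writing the complex as the union of this star with the clique complex of $G^c \setminus v$ and applying Mayer--Vietoris, the inclusion of $\Delta(G^c \setminus v)$ becomes an isomorphism on reduced homology in positive degrees. As $G^c \setminus v$ is again chordal, induction yields $\tilde{H}_1 = 0$.

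The main obstacle is this last vanishing statement: the real content of Fröberg's theorem is that chordality is exactly the combinatorial condition preventing a one-cycle from arising in the flag complex of some induced subgraph, and the simplicial-vertex induction is where that is made precise. Everything else is bookkeeping through Lemma~\ref{lem:RegFromTopology}. I would also flag the degenerate cases — an isolated vertex $v$ (empty $N(v)$) contributes only to $\tilde{H}_0$, and the trivial situation $I(G) = 0$ (no edges, so $G^c$ is complete and hence chordal) should be excluded or noted separately — though neither affects the $\tilde{H}_1$ computation driving the equivalence.
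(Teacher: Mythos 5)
Your proof is correct and follows essentially the same route as the paper's sketch: identify $\Delta(G)$ with the clique complex of $G^c$, apply Lemma~\ref{lem:RegFromTopology} with $d=2$, and match induced $4^+$-cycles in $G^c$ with nonvanishing $\tilde{H}_1$ of induced subcomplexes. The only difference is that you actually supply the argument for the hard direction (chordal $\Rightarrow$ vanishing $\tilde{H}_1$, via a simplicial vertex and Mayer--Vietoris), which the paper merely asserts, and your handling of the degenerate cases is sound.
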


\begin{proof} We briefly sketch out the proof. Let $\Delta$ be the independence complex of $G$. By Lemma \ref{lem:RegFromTopology}, $\reg(R/I(G)) = 1$ iff no induced subcomplex of $\Delta$ has any balls of positive dimension. This is the case iff the 1-skeleton of $\Delta$ has no induced cycle of length $\ge 4$. Note that the 1-skeleton of $\Delta$ is exactly the complement of $G$. In fact, an induced cycle of length $\ge 4$ in $G^c$ gives a homology cycle in $\Delta$.
\end{proof}

It is then natural to investigate squarefree monomial ideals with regularity 3. To the best of our knowledge, it is still an open question to characterize these ideals; though partial results have been obtained.

Let $P$ be a given collection of graphs. A graph $G$ is said to be \emph{$P$-free} if it contains no induced subgraphs that are the same as elements in $P$. In particular, a graph $G$ is called \emph{claw-free} if it contains no 4 vertices on which the induced subgraph is a star, and a graph $G$ is \emph{$C_4$-free} if it contains no induced $4$-cycles. The following result was proved in \cite[Theorem 1.2]{Nevo} (see also \cite[Theorem 3.4]{DHS}).

\begin{theorem} \label{thm.clawfree}
Let $G$ be a claw-free simple graph such that $G^c$ contains no $C_4$. Then
$$\reg(I(G)) \le 3.$$
\end{theorem}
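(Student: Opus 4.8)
The plan is to prove the equivalent inequality $\reg(R/I(G))\le 2$ by induction on the number of vertices, using the vertex-splitting behind Theorem \ref{thm.ind11} together with Fröberg's criterion (Theorem \ref{thm.Froberg}). Throughout I write $\Gamma=G^c$, and I use two translations of the hypotheses. First, $G$ is claw-free exactly when $\Gamma$ contains no induced triangle-plus-isolated-vertex $K_3\sqcup K_1$. Second, ``$G^c$ has no $C_4$'' says precisely that $\Gamma$ is $C_4$-free (equivalently, $G$ is $2K_2$-free). Both conditions pass to induced subgraphs, and since complementation commutes with taking induced subgraphs, every induced subgraph of $G$ again satisfies the hypotheses; so the induction is legitimate. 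If $G$ has an isolated vertex $x$ then $\reg(I(G))=\reg(I(G\setminus x))$ and we finish by induction, so we may assume every vertex appears in $I(G)$.

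Fix a vertex $x$ and apply Remark \ref{rem.1of2} with $h=x$: $\reg(I(G))$ equals either $\reg(I(G):x)+1$ or $\reg(I(G),x)$. I first identify these two ideals. Modding out $x$ gives $\reg(I(G),x)=\reg(I(G\setminus x))$. For the colon ideal, $I(G):x=(N_G(x))+I(G_x)$, where $G_x=G[X\setminus N_G[x]]$ is the induced subgraph on the non-neighbors of $x$; since the variables in $N_G(x)$ do not occur in $I(G_x)$, the same tensor-product-of-resolutions argument used just before Theorem \ref{thm.ind3} gives $\reg(I(G):x)=\reg(I(G_x))$. Hence
$$\reg(I(G))\in\{\,\reg(I(G_x))+1,\ \reg(I(G\setminus x))\,\}.$$
The deletion term is harmless: $G\setminus x$ satisfies the hypotheses, so $\reg(I(G\setminus x))\le 3$ by induction. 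To control the colon term I must choose $x$ well, and the whole theorem reduces to finding $x$ with $\reg(I(G_x))\le 2$. By Fröberg's theorem this means $(G_x)^c$ is chordal; and because the non-neighbors of $x$ in $G$ are exactly the neighbors of $x$ in $\Gamma$, we have $X\setminus N_G[x]=N_\Gamma(x)$ and therefore $(G_x)^c=\Gamma[N_\Gamma(x)]$. So everything comes down to the following statement, which I would isolate as a lemma: \emph{if $\Gamma$ is $C_4$-free and has no induced $K_3\sqcup K_1$, then some vertex $x$ has chordal neighborhood $\Gamma[N_\Gamma(x)]$.} Granting this, choose such an $x$; then $\reg(I(G_x))\le 2$ by Theorem \ref{thm.Froberg}, so $\reg(I(G_x))+1\le 3$, and together with $\reg(I(G\setminus x))\le 3$ this yields $\reg(I(G))\le 3$, closing the induction.

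The good-vertex lemma is the main obstacle, and it is exactly here that both forbidden configurations are needed. Since $\Gamma$ is $C_4$-free, a non-chordal neighborhood must contain an induced cycle of length $\ge 5$, so the content of the lemma is to produce a vertex whose neighborhood has no long induced cycle. If $\Gamma$ has a simplicial vertex the lemma is immediate (a clique is chordal), so the real work is the non-chordal case. The two tools I would use are: (i) $C_4$-freeness forces the common neighbors of any non-adjacent pair to form a clique, since two non-adjacent common neighbors would complete an induced $C_4$; and (ii) the absence of an induced $K_3\sqcup K_1$ forces every triangle of $\Gamma$ to be dominating. I would then argue by an extremal choice of $x$ (for instance a vertex of maximum degree, or one whose closed neighborhood is inclusion-maximal) and show that an induced cycle of length $\ge 5$ inside $N_\Gamma(x)$, confronted with (i) and (ii), either produces a forbidden $C_4$ or $K_3\sqcup K_1$ or else relocates to the neighborhood of a strictly ``larger'' vertex, contradicting extremality. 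Balancing the two forbidden subgraphs against the long cycle in this case analysis is the crux; the reduction above shows it is also sufficient.

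Finally, this is consistent with the purely topological route through Lemma \ref{lem:RegFromTopology}. There $\reg(R/I(G))\le 2$ is the assertion that $\tilde{H}_j(\Delta(G)[S])=0$ for all $j\ge 2$ and all vertex subsets $S$, which by heredity reduces to the vanishing $\tilde{H}_j(\Delta(G))=0$ for $j\ge 2$. A Mayer--Vietoris decomposition of the independence complex along the star and the deletion of a vertex $x$ makes the vanishing for $j\ge 3$ automatic by induction, while squeezing $\tilde{H}_2(\Delta(G))$ into $\tilde{H}_1(\link_{\Delta(G)}(x))$. Since $\link_{\Delta(G)}(x)=\Delta(G_x)$ is the independence complex of $G_x$, its first homology vanishes precisely when $\Gamma[N_\Gamma(x)]$ is chordal, so the $j=2$ case is again exactly the good-vertex lemma.
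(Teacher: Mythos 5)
The paper does not actually prove this theorem --- it is quoted from Nevo and from Dao--Huneke--Schweig --- so there is no in-text argument to compare against. Your reduction is nonetheless the standard route and is correct as far as it goes: the exact sequence behind Theorem \ref{thm.ind11} with $h=x$, the identifications $\reg(I(G),x)=\reg(I(G\setminus x))$ and $\reg(I(G):x)=\reg(I(G_x))$ via $I(G):x=(N_G(x))+I(G_x)$ and the tensor-product argument, the observation that both hypotheses are hereditary so the induction on vertices is legitimate, and the appeal to Fr\"oberg's criterion (Theorem \ref{thm.Froberg}) all check out. You also correctly isolate the entire content of the theorem in a single combinatorial statement: in $\Gamma=G^c$, which is $C_4$-free and has no induced $K_3\sqcup K_1$, some vertex $x$ must have $\Gamma[N_\Gamma(x)]$ chordal.

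The gap is that you never prove this lemma. You name the two relevant tools and gesture at an extremal argument whose case analysis you yourself call ``the crux,'' so the proof is incomplete at its only nontrivial point. The lemma is true, and no extremal choice is needed. If $\Gamma[N_\Gamma(x)]$ is not chordal, then by $C_4$-freeness it contains an induced cycle $v_1\cdots v_k$ with $k\ge 5$ and all $v_i\in N_\Gamma(x)$. Let $w$ be any vertex nonadjacent to $x$. Each triangle $\{x,v_i,v_{i+1}\}$ is dominating (else an induced $K_3\sqcup K_1$), so $N_\Gamma(w)\cap\{v_1,\dots,v_k\}$ meets every edge of the cycle; a vertex cover of an induced cycle of length $\ge 5$ has at least $3$ elements and, since such a cycle is triangle-free, contains a nonadjacent pair $v_i,v_j$. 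Then $x,v_i,w,v_j$ induce a $C_4$, a contradiction. Hence any vertex with non-chordal neighborhood is universal in $\Gamma$; if every vertex had a non-chordal neighborhood, $\Gamma$ would be complete and every neighborhood would be a clique --- a contradiction --- so a good vertex exists and your induction closes. (Two small further points: your last paragraph's claim that $\tilde{H}_1(\Delta(G_x))=0$ holds ``precisely when'' $\Gamma[N_\Gamma(x)]$ is chordal is only an implication in the direction you need, not an equivalence; and you should dispose of the degenerate case where $G_x$ has no edges, where $I(G):x$ is generated by variables and the bound is immediate.)
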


\begin{example} \label{ex.C4}
A 5-cycle is a claw-free graph whose complement contains no $C_4$, and $\reg(I(C_5)) = 3$. 
\end{example}

For bipartite graphs, the edge ideals having regularity 3 can be characterized. Recall that a graph $G$ is \emph{bipartite} if the vertices $X(G)$ can be partitioned into disjoint subsets $X(G) = Y \cup Z$ such that edges of $G$ connect a vertex in $Y$ and a vertex in $Z$. The \emph{bipartite complement} of a bipartite graph $G$, denoted by $G^{\textrm{bc}}$, is the bipartite graph over the same partition of the vertices, and for $y \in Y, z \in Z$, $\{y,z\} \in G^{\textrm{bc}}$ if and only if $\{y,z\} \not\in G$. The following classification was given in \cite[Theorem 3.1]{FRG}.

\begin{theorem} \label{thm.FRG}
Let $G$ be a connected bipartite graph. Then $\reg(G) = 3$ if and only if $G^c$ has induced cycles (of length $\ge 4$) and $G^{\textrm{bc}}$ has no induced cycles of length $\ge 6$.
\end{theorem}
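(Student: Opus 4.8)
The plan is to treat the two stated conditions separately, using Fr\"oberg's theorem (Theorem~\ref{thm.Froberg}) for the first and a regularity computation for the second. Write $\reg(G)=\reg(I(G))$ and recall $\reg(R/I(G))=\reg(G)-1$. Since $G$ is connected and bipartite it has at least one edge, so $\reg(G)\ge 2$, and by Theorem~\ref{thm.Froberg} we have $\reg(G)=2$ exactly when $G^c$ is chordal. Taking the contrapositive, $\reg(G)\ge 3$ if and only if $G^c$ has an induced cycle of length $\ge 4$, which is precisely the first stated condition. Consequently the theorem reduces to the claim that, \emph{assuming $\reg(G)\ge 3$, one has $\reg(G)\le 3$ if and only if $G^{\mathrm{bc}}$ has no induced cycle of length $\ge 6$} (equivalently, $G^{\mathrm{bc}}$ is chordal bipartite).

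For the ``only if'' part I would argue by contraposition: suppose $G^{\mathrm{bc}}$ contains an induced cycle $C$ of length $2k\ge 6$, with vertex set $S$. Since $C$ is an induced cycle of the bipartite graph $G^{\mathrm{bc}}$, its vertices split as $k$ in each part, and bipartite complementation commutes with passing to the induced subgraph on $S$; hence $G[S]=(G^{\mathrm{bc}}[S])^{\mathrm{bc}}=C^{\mathrm{bc}}$ is the bipartite complement of a $2k$-cycle. By Lemma~\ref{lem:RegShrinksInSubhypergraphs}(1), $\reg(G)\ge\reg(G[S])=\reg(I(C^{\mathrm{bc}}_{2k}))$, so it suffices to prove $\reg(I(C^{\mathrm{bc}}_{2k}))\ge 4$ for every $k\ge 3$. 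For this I would invoke Lemma~\ref{lem:RegFromTopology} and exhibit a vertex subset whose induced subcomplex of the independence complex has nonvanishing $\tilde{H}_2$; the small cases are transparent ($C^{\mathrm{bc}}_6$ is a disjoint union of three edges, whose regularity is $4$ by Theorem~\ref{thm.inducedmatching}, and $C^{\mathrm{bc}}_8\cong C_8$, whose independence complex is homotopy equivalent to $S^2$), and the general case follows from the analogous homology computation for the explicit complex $\Delta(C^{\mathrm{bc}}_{2k})$.

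The substance of the theorem lies in the ``if'' direction: if $G^{\mathrm{bc}}$ is chordal bipartite then $\reg(G)\le 3$. Here I would induct on $|X(G)|$, noting that this conclusion needs only the chordal-bipartite hypothesis and is inherited by induced subgraphs, since deleting a vertex $v$ replaces $G^{\mathrm{bc}}$ by $G^{\mathrm{bc}}\setminus v$, again chordal bipartite. Applying Theorem~\ref{thm.ind11}(2) to the single vertex $v$, and translating $I(G):v$ and $I(G)+(v)$ into the induced subgraphs $G\setminus N[v]$ and $G\setminus v$ (the extra linear forms do not affect regularity), gives
$$\reg(G)\le\max\{\reg(G\setminus N[v])+1,\ \reg(G\setminus v)\},$$
where $\reg(G\setminus v)\le 3$ by induction. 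It therefore remains to choose $v$ so that $\reg(G\setminus N[v])\le 2$, that is, by Theorem~\ref{thm.Froberg}, so that the induced bipartite subgraph $G\setminus N[v]$ has chordal complement.

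The main obstacle is exactly this last step: producing a vertex whose removal, together with its neighborhood, leaves a regularity-$\le 2$ subgraph. This is where the chordal-bipartite structure of $G^{\mathrm{bc}}$ must be used essentially—chordal bipartite graphs admit perfect (edge) elimination orderings and ``simple'' vertices whose neighborhoods are nested—and I would translate the existence of such a vertex in $G^{\mathrm{bc}}$ into the required property of $G\setminus N[v]$. Verifying that the link term does not acquire regularity $3$ is the delicate point, since one must control the interaction of the two distinct complementation operations, $G\mapsto G^c$ and $G\mapsto G^{\mathrm{bc}}$, under deletion of $N[v]$. Once a suitable shedding vertex is identified, the induction closes and the bound $\reg(G)\le 3$ follows.
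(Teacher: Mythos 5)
This theorem is quoted in the survey from \cite{FRG} without proof, so there is no in-paper argument to compare against; your proposal has to stand on its own, and as it stands it has a genuine gap. Your opening reduction is correct: Fr\"oberg's theorem gives $\reg(G)\ge 3$ iff $G^c$ has an induced cycle of length $\ge 4$, so the statement reduces to showing that (given $\reg(G)\ge 3$) one has $\reg(G)\le 3$ iff $G^{\mathrm{bc}}$ is chordal bipartite. Your lower-bound direction is also essentially sound and follows the standard route: restrict to the vertex set of an induced cycle of $G^{\mathrm{bc}}$ of length $2k\ge 6$, observe $G[S]=C_{2k}^{\mathrm{bc}}$, and invoke Lemma \ref{lem:RegShrinksInSubhypergraphs}. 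But note that even here the cases $2k=6,8$ that you verify are not the whole story; the claim $\reg(I(C_{2k}^{\mathrm{bc}}))\ge 4$ for all $k\ge 3$ requires an actual homology computation (exhibiting a nonzero class in $\tilde H_2$ of the independence complex of $C_{2k}^{\mathrm{bc}}$, which is what \cite{FRG} does), and you only assert that ``the general case follows.''

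The serious gap is in the ``if'' direction, which is the substance of the theorem and which you yourself flag as ``the main obstacle.'' Your induction via Theorem \ref{thm.ind11}(2) reduces everything to producing, for each connected bipartite $G$ with $G^{\mathrm{bc}}$ chordal bipartite, a vertex $v$ such that $(G\setminus N[v])^c$ is chordal --- and this is precisely the step you do not carry out. Gesturing at perfect elimination orderings of chordal bipartite graphs does not constitute a proof: you must exhibit the vertex, prove that deleting $N[v]$ (not just $v$) preserves the structure you need, and verify chordality of the complement of what remains; none of this is routine, and it is exactly where the interaction between the two complementation operations $G\mapsto G^c$ and $G\mapsto G^{\mathrm{bc}}$ has to be controlled. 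There is also a smaller wrinkle you should address: your induction deletes a vertex and appeals to the inductive hypothesis for $G\setminus v$, which may be disconnected, and for a disconnected bipartite graph the bipartition (hence $G^{\mathrm{bc}}$) is not unique, so the inductive statement must be formulated for bipartite graphs with a fixed bipartition and you must check the hypothesis is inherited for \emph{that} bipartition. Until the shedding-vertex step is actually established, the upper bound --- and hence the theorem --- is not proved.
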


Let us now turn to classes of ideals for which the regularity can be computed explicitly. In most of the studies that we survey, this is the case when inductive inequalities in Section \ref{sec.induction} become equalities, and the method is to find combinatorial invariants that behaves well with respect to the induction processes. 

The following result was first proved for forests in \cite[Theorem 2.18]{Z} and \cite[Corollary 3.11]{HVT2005}, and extended to chordal graphs in \cite[Theorem 6.8]{HVT2008}.

\begin{theorem} \label{thm.regchordal}
Let $G$ be a chordal graph and let $\nu(G)$ be the maximum size of an induced matching in $G$. Then
$$\reg(G) = \nu(G)+1.$$
\end{theorem}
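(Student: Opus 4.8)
For a chordal graph $G$, $\reg(I(G)) = \nu(G)+1$.

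The inequality $\reg(I(G)) \ge \nu(G)+1$ is immediate from Theorem \ref{thm.inducedmatching}, so the real content is the reverse bound $\reg(I(G)) \le \nu(G)+1$, which I would establish by induction on the number of vertices of $G$. After discarding isolated vertices (which alter neither side), I may assume every vertex lies in an edge, with the edgeless and single-edge graphs serving as base cases. The crucial point is to choose the right vertex on which to induct. By Dirac's theorem every chordal graph has a \emph{simplicial} vertex $x$, that is, a vertex whose neighborhood $N(x)$ is a clique; fix any neighbor $y$ of $x$. Since $N(x)$ is a clique and $x$ is adjacent to $y$, every vertex of $N[x]$ lies in $N[y]$, so $N[x] \subseteq N[y]$. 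I would then induct by splitting on the variable $y$, \emph{not} on $x$ itself (splitting on $x$ is too lossy, as the example of $P_4$ shows).

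Applying Theorem \ref{thm.ind11}(1) to the independence complex $\Delta = \Delta(G)$ with the face $\sigma = \{y\}$ of dimension $0$, and using the standard identifications $\link_\Delta(y) = \Delta(G \setminus N[y])$ and $\del_\Delta(y) = \Delta(G \setminus y)$, gives
$$\reg(G) \le \max\{\reg(G \setminus N[y]) + 1, \ \reg(G \setminus y)\}.$$
Both $G \setminus N[y]$ and $G \setminus y$ are induced subgraphs of $G$, hence chordal, and each has strictly fewer vertices, so the induction hypothesis applies and bounds the right-hand side by $\max\{\nu(G \setminus N[y]) + 2, \ \nu(G \setminus y) + 1\}$. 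Because an induced matching of an induced subgraph remains an induced matching in $G$, one has $\nu(G \setminus y) \le \nu(G)$, which disposes of the second term.

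The heart of the argument, and the step I expect to be the main obstacle, is the inequality $\nu(G \setminus N[y]) + 1 \le \nu(G)$, which is precisely where the simplicial choice of $y$ pays off. Let $M$ be a maximum induced matching of $G \setminus N[y]$; I claim $M \cup \{\{x,y\}\}$ is an induced matching of $G$. Its vertices $V(M)$ avoid $N[y]$, while $\{x,y\} \subseteq N[y]$, so the edges are pairwise disjoint. To see the collection is induced I must rule out edges of $G$ between $\{x,y\}$ and $V(M)$: the vertex $y$ has no neighbor in $V(M)$ since $V(M) \cap N(y) = \emptyset$, and $x$ has no neighbor in $V(M)$ since $N(x) \subseteq N[x] \subseteq N[y]$ is likewise disjoint from $V(M)$. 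Hence $\nu(G) \ge |M| + 1 = \nu(G \setminus N[y]) + 1$, and the first term is bounded by $\nu(G)+1$ as well.

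Combining the two estimates yields $\reg(G) \le \nu(G)+1$, and together with the lower bound the theorem follows. The delicate ingredients are the existence of a simplicial vertex (Dirac) and the hereditariness of chordality under induced subgraphs, which keep the induction inside the class; the combinatorial extension lemma above is what forces the inductive inequality of Theorem \ref{thm.ind11} to be tight. I note that one could instead invoke the refinement of Remark \ref{rem.1of2} to obtain an exact recursion $\reg(G) \in \{\reg(G\setminus N[y])+1, \reg(G\setminus y)\}$, but this sharper statement is not needed for the upper bound.
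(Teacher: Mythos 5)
Your proof is correct, but it follows a genuinely different route from the one in the paper. The paper inducts on the number of edges: it invokes Lemma \ref{lem.splittingedge} (from \cite{HVT2008}), which produces an edge $E$ for which the deletion/contraction inequality of Theorem \ref{thm.ind3} is an \emph{equality} with $G\setminus E$ and $G_E$ still chordal, and then verifies that $\nu$ satisfies the same recursion $\nu(G)=\max\{\nu(G_E)+1,\nu(G\setminus E)\}$; both bounds come out of this single recursion at once. You instead induct on vertices, prove only the upper bound this way, and import the lower bound from Theorem \ref{thm.inducedmatching}. Your key structural input is Dirac's theorem giving a simplicial vertex $x$, and the decisive (and correct) observation that splitting at a \emph{neighbor} $y$ of $x$ via Theorem \ref{thm.ind11} works because $N[x]\subseteq N[y]$ lets you append the edge $\{x,y\}$ to any induced matching of $G\setminus N[y]$, yielding $\nu(G\setminus N[y])+1\le\nu(G)$. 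The trade-off: the paper's argument rests on a nontrivial external equality lemma but then reduces the theorem to a purely formal computation with $\nu$; yours needs only the crude inequality from the short exact sequence (no equality statement), is more self-contained, and makes visible exactly where chordality enters, at the cost of handling the two inequalities by separate arguments. Both inductions stay inside the class of chordal graphs, and your treatment of the degenerate (edgeless) cases is at the same level of rigor as the paper's.
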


To prove Theorem \ref{thm.regchordal}, we shall need a lemma (see \cite[Lemma 5.7 and Theorem 6.2]{HVT2008}) which states that for chordal graphs, one can always find an edge $E$ such that the inductive inequality of Theorem \ref{thm.ind3} becomes an equality, and that this process can be done recursively.

\begin{lemma} \label{lem.splittingedge}
Let $G$ be a chordal graph. Then there exists an edge $E$ in $G$ such that
\begin{enumerate}
\item $\reg(G) = \max\{\reg(G_E) + 1, \reg(G \setminus E)\}.$
\item $G \setminus E$ and $G_E$ are both chordal graphs.
\end{enumerate}
\end{lemma}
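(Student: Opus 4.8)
The plan is to prove both statements by exploiting a perfect elimination ordering of the chordal graph $G$, which is the standard combinatorial engine available for chordal graphs. Recall that $G$ is chordal if and only if it admits a \emph{simplicial vertex}, that is, a vertex $v$ whose neighborhood $N(v)$ forms a clique; moreover, any induced subgraph of a chordal graph is again chordal, so simplicial vertices persist after deletions. The idea is to pick the edge $E$ to be an edge $\{u,v\}$ incident to a well-chosen simplicial vertex $v$ of maximum degree, and then to verify the two required properties separately.

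First I would establish part (2), which is the easy structural half. Both $G \setminus E$ and $G_E$ are induced objects in a sense that keeps chordality: $G_E$ is, by the remark following Theorem~\ref{thm.ind3}, the induced subgraph of $G$ on the vertex set $X \setminus N[E]$, and an induced subgraph of a chordal graph is chordal. For $G \setminus E$ (deleting only the edge $E$, not its vertices) one must argue that removing the single edge at a simplicial vertex cannot create an induced cycle of length $\ge 4$; this is where the specific choice of $E$ at a simplicial vertex $v$ matters, since any cycle through the pair $u,v$ would require two nonadjacent neighbors of $v$, contradicting that $N(v)$ is a clique.

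Next I would establish part (1). The inequality $\reg(G) \le \max\{\reg(G_E)+1, \reg(G\setminus E)\}$ is exactly the inductive bound of Theorem~\ref{thm.ind3} specialized to a graph (where $d=|E|=2$, so $d-1 = 1$). The substance is the reverse inequality, i.e.\ that equality is \emph{achieved} by the right choice of $E$. The plan is to use Remark~\ref{rem.1of2}: writing $h = x^E$, the regularity $\reg(I(G))$ equals \emph{either} $\reg(I(G):h)+\dots$ \emph{or} $\reg(I(G),h)$, and one identifies these with $\reg(G_E)$ and $\reg(G\setminus E)$ respectively via the contraction/deletion dictionary. This forces $\reg(G)$ to equal one of the two terms on the right rather than merely being bounded by their maximum, which is precisely statement (1).

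The main obstacle I expect is the reverse inequality in part (1): turning the generic bound of Theorem~\ref{thm.ind3} into an equality requires choosing $E$ so that the short exact sequence does not lose regularity, and then verifying that the surviving term genuinely realizes $\reg(G)$. The delicate point is coordinating the \emph{same} edge $E$ to work for all three conditions simultaneously --- preserving chordality of both $G\setminus E$ and $G_E$ while also forcing the equality --- and then confirming that the recursion terminates correctly (it bottoms out at graphs that are either edgeless or consist of a single clique/edge, where the regularity is computed directly). Making the choice of $E$ canonical, via a simplicial vertex of maximal degree together with a carefully selected neighbor, is the crux that makes the recursion go through cleanly and feed directly into the induction for Theorem~\ref{thm.regchordal}.
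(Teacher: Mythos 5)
The paper itself does not prove this lemma; it only cites \cite[Lemma 5.7 and Theorem 6.2]{HVT2008}, so your attempt has to be judged against that argument. Your treatment of part (2) is essentially sound: taking $E=\{u,v\}$ with $v$ a simplicial vertex, $G_E$ is an induced subgraph of $G$ and hence chordal, and an induced cycle of length $\ge 4$ in $G\setminus E$ would have to be a cycle in $G$ whose only chord is $uv$, forcing two nonadjacent vertices inside the clique $N(v)$ --- a contradiction. Also, the lower bound $\reg(G)\ge \reg(G_E)+1$ is cheap, since the induced subgraph of $G$ on $X(G_E)\cup\{u,v\}$ is the disjoint union of $G_E$ with the single edge $E$, and Lemma \ref{lem:RegShrinksInSubhypergraphs} applies.

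The genuine gap is in the rest of part (1). Remark \ref{rem.1of2} concerns the sequence (\ref{eq.ses}) for $h$ a \emph{single variable}; it cannot be transplanted to $h=x^E$ with $E$ an edge of $G$, because then $x^E\in I(G)$, so $I(G):x^E=R$ and $(I(G),x^E)=I(G)$ and that sequence collapses to a tautology. The operations $G\setminus E$ and $G_E$ arise instead from the Mayer--Vietoris-type sequence (\ref{eq.sequence}) behind Theorem \ref{thm.ind3}, and for that sequence there is no general ``one of the terms is attained'' dichotomy --- the paper explicitly records the characterization of equality cases as an open problem (Problem \ref{prob.mapping}). In particular you have no argument for $\reg(G)\ge\reg(G\setminus E)$, which is false for arbitrary edge deletions (an edge is not an induced subobject, so Lemma \ref{lem:RegShrinksInSubhypergraphs} does not apply). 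The missing idea, which is the actual content of \cite{HVT2008}, is that an edge $E=\{u,v\}$ with $v$ simplicial satisfies $N[v]\subseteq N[u]$ and is therefore a \emph{splitting edge}: the decomposition $I(G)=(x^E)+I(G\setminus E)$ is a Betti splitting, i.e.\ $\beta_{i,j}(I(G))=\beta_{i,j}((x^E))+\beta_{i,j}(I(G\setminus E))+\beta_{i-1,j}((x^E)\cap I(G\setminus E))$, and this additivity of graded Betti numbers (equivalently, minimality of the mapping cone) combined with $\reg\big((x^E)\cap I(G\setminus E)\big)=\reg(I(G_E))+2$ is what converts the inequality of Theorem \ref{thm.ind3} into the claimed equality. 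Without establishing the splitting property (or some substitute lower-bound argument), your proof of (1) does not close.
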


\begin{proof}[of Theorem \ref{thm.regchordal}]
We use induction on the number of edges in $G$. If $G$ consists of only isolated vertices or a single edge then the assertion is trivial. For the induction step, in light of Lemma \ref{lem.splittingedge} it suffices to show that for an edge $E$ in $G$ satisfying the conclusion of Lemma \ref{lem.splittingedge}, we have
$$\nu(G) = \max\{\nu(G_E)+1, \nu(G\setminus E)\}.$$
This is indeed true. It is easy to see that an induced matching in $G \setminus E$ is also an induced matching in $G$, and an induced matching in $G_E$ together with $E$ would form an induced matching in $G$. Thus, $\nu(G) \ge \max\{\nu(G_E)+1, \nu(G\setminus E)\}.$ On the other hand, suppose that $C = \{E_1, \dots, E_c\}$ is an induced matching in $G$ of maximum size. If $E \not\in C$ then $C$ is an induced matching in $G \setminus E$. If $E \in C$, then $C \setminus \{E\}$ forms an induced matching in $G_E$. Therefore, $\nu(G) \le \max\{\nu(G_E)+1, \nu(G\setminus E)\}.$
\end{proof}

\begin{figure}[h!]
\centering
\includegraphics[height=1.5in]{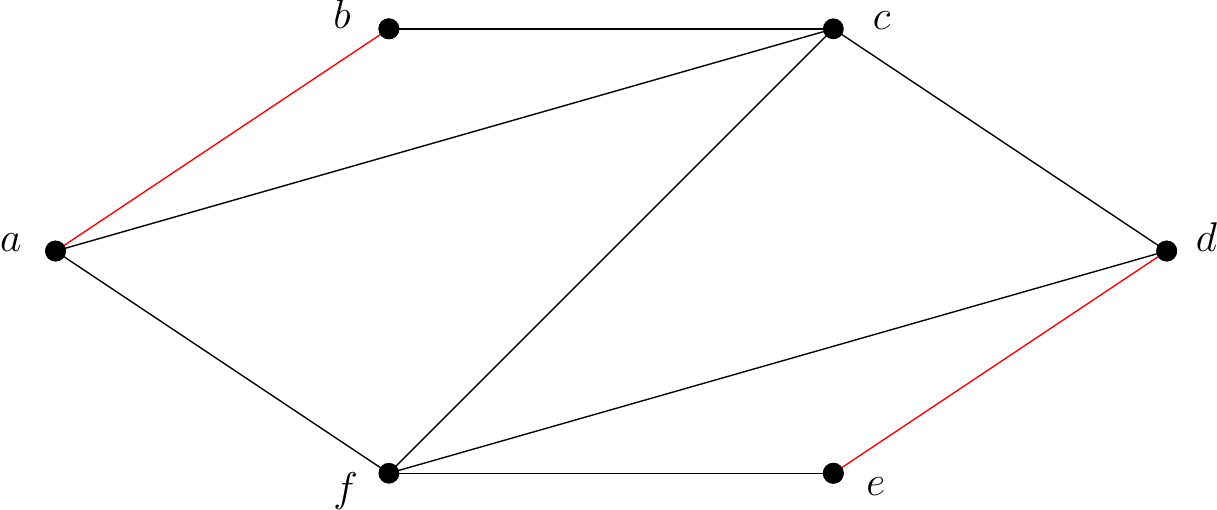}
\caption{A chordal graph.}\label{fig:chordal}
\end{figure}

\begin{example} Let $G$ be the chordal graph in Figure \ref{fig:chordal} and let $I = I(G)$. Then
$$0 \rightarrow R(-5)^2 \oplus R(-6) \rightarrow R(-4)^{10} \oplus R(-5)^2 \rightarrow R(-3)^{16} \oplus R(-4) \rightarrow R(-2)^9 \rightarrow I \rightarrow 0$$
is the minimal free resolution of $I$ giving $\reg(I) = 3$. On the other hand, it is easy to see that $\nu(G) = 2$ and $\{ab, cd\}$ is a maximal induced matching.
\end{example}

Recall that a collection of vertices $V$ in a graph $G$ is called a \emph{vertex cover} if for any edge $E$ in $G$, $V \cap E \not= \emptyset$. A \emph{minimal} vertex cover is with respect to inclusion. Note that the smallest size of a vertex cover in $G$ is equal to the height of $I(G)$. A graph $G$ is called \emph{unmixed} if all minimal vertex cover of $G$ has the same cardinality. For an unmixed graph $G$, it is known (cf. \cite{GV}) that $2\height I(G) \ge |X(G)|$. A graph $G$ is called \emph{very well-covered} (see \cite{many}) if $G$ is unmixed, has no isolated vertices, and $2\height I(G) = |X(G)|$.

Theorem \ref{thm.regchordal} has been extended to a number of classes of graphs. In particular, for the following classes of graph, the regularity can be computed by its induced matching number.

\begin{theorem} \label{thm.computeimn}
Let $G$ be a simple graph. Let $\nu(G)$ be the maximum size of an induced matching in $G$. Then
$$\reg(G) = \nu(G)+1$$
in the following cases:
\begin{enumerate}
\item $G$ is a sequentially Cohen-Macaulay bipartite graph (see \cite{VT2009});
\item $G$ is an unmixed bipartite graph (see \cite{Kummini});
\item $G$ is a very well-covered graph (see \cite{many});
\item $G$ is a $C_5$-free vertex decomposable graph (see \cite{KAM}, the case where $G$ is also $C_4$-free was proved in \cite{BC2012}).
\end{enumerate}
\end{theorem}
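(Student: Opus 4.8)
The lower bound is uniform and costs nothing: in a graph every edge has cardinality $2$, so an induced matching of size $\nu(G)$ gives $\sum_{i=1}^{\nu(G)}(|E_i|-1)+1 = \nu(G)+1$, and Theorem \ref{thm.inducedmatching} yields $\reg(G)\ge\nu(G)+1$ for every simple graph. Thus in each of the four cases it suffices to prove the reverse inequality $\reg(G)\le\nu(G)+1$, and the whole difficulty is concentrated there.

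The plan is to run the same inductive engine that drives the chordal case (Lemma \ref{lem.splittingedge} and the proof of Theorem \ref{thm.regchordal}), now split at a vertex rather than an edge. Taking $\sigma=\{v\}$ in Theorem \ref{thm.ind11}(1) and passing through the independence complex, where $\link_{\Delta(G)}(v)=\Delta(G\setminus N[v])$ and $\del_{\Delta(G)}(v)=\Delta(G\setminus v)$, gives the inequality
$$\reg(G)\le\max\{\reg(G\setminus N[v])+1,\ \reg(G\setminus v)\}.$$
If the chosen $v$ is such that $G\setminus v$ and $G\setminus N[v]$ again lie in the class under consideration, then by induction on the number of vertices the two terms on the right equal $\nu(G\setminus v)+1$ and $\nu(G\setminus N[v])+2$, and the bound $\reg(G)\le\nu(G)+1$ follows once we verify the two purely combinatorial inequalities
$$\nu(G\setminus v)\le\nu(G)\qquad\text{and}\qquad\nu(G\setminus N[v])+1\le\nu(G).$$
The first is automatic, since deleting a vertex cannot enlarge an induced matching. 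So everything hinges on producing a vertex $v$ for which the \emph{second} inequality holds and for which closure of the class is guaranteed.

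The class-specific input is exactly this choice of $v$. For case (4) a shedding vertex of the vertex decomposable complex $\Delta(G)$ makes both $G\setminus v$ and $G\setminus N[v]$ vertex decomposable, so closure is free; the role of the $C_5$-free hypothesis is to force the existence of a shedding vertex $v$ admitting an edge $\{v,w\}$ that extends a maximum induced matching of $G\setminus N[v]$ to an induced matching of $G$, which is precisely $\nu(G\setminus N[v])+1\le\nu(G)$. For the bipartite cases (1) and (2) one first invokes the relevant structure theory: sequentially Cohen--Macaulay bipartite graphs are vertex decomposable, while unmixed bipartite graphs carry a perfect matching together with a partial order on their bipartition classes. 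In both situations this structure provides a vertex $v$ that plays the role of a shedding vertex, whose deletion and link stay in the class, and which possesses a neighbor not adjacent to the vertices of a maximum induced matching of $G\setminus N[v]$; that neighbor supplies the extending edge. Case (3), very well-covered graphs, is handled by its own structure theorem, where the equality $2\,\height I(G)=|X(G)|$ forces a perfect matching and hence a vertex with a degree-one neighbor to split on.

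The main obstacle, in every case, is the second matching inequality $\nu(G\setminus N[v])+1\le\nu(G)$. The naive attempt --- take a maximum induced matching $M$ of $G\setminus N[v]$ and adjoin an arbitrary edge at $v$ --- can fail, because a neighbor $w$ of $v$ may be adjacent to a vertex matched by $M$, destroying inducedness. Removing this obstruction is exactly what chordality accomplished in Theorem \ref{thm.regchordal} and what $C_5$-freeness, bipartiteness-with-unmixedness, and very-well-coveredness accomplish here: each guarantees a splitting vertex whose edges reach outside the neighborhood of $M$, so that some edge at $v$ genuinely extends $M$. Verifying that such a vertex exists, and that both the deletion and the link remain inside the class, is the technical heart of the four cited results.
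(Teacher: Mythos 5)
Your framework is sound and is exactly the machinery this survey assembles: the lower bound $\reg(G)\ge\nu(G)+1$ is Theorem \ref{thm.inducedmatching} specialized to graphs, and the upper bound via $\reg(G)\le\max\{\reg(G\setminus N[v])+1,\ \reg(G\setminus v)\}$ is Theorem \ref{thm.ind11}(2) with $V=\{v\}$ (or Theorem \ref{thm.vd} when $v$ is a shedding vertex), combined with the two matching inequalities you isolate. Be aware, though, that the paper itself gives no proof of this theorem --- it is a survey statement with pointers to \cite{VT2009,Kummini,many,KAM,BC2012} --- and its only comment is the remark that case (1) is subsumed by case (4) because sequentially Cohen--Macaulay bipartite graphs are vertex decomposable and bipartite graphs are $C_5$-free; your sketch would benefit from making that reduction explicit, since it eliminates one of your four case analyses outright.

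The genuine gap is that everything you defer to ``the technical heart of the four cited results'' --- the existence of a splitting vertex $v$ with $\nu(G\setminus N[v])+1\le\nu(G)$ \emph{and} closure of the class under both $G\mapsto G\setminus v$ and $G\mapsto G\setminus N[v]$ --- is the entire content of the theorem, and in the one place where you commit to a concrete mechanism the claim is false. Very well-coveredness does not force ``a vertex with a degree-one neighbor'': the $4$-cycle $C_4$ is very well-covered (unmixed, no isolated vertices, $2\height I(C_4)=4=|X|$), yet every vertex has degree $2$; a perfect matching by itself never produces a degree-one vertex. Closure is also not automatic where you need it: unmixedness is not inherited by induced subgraphs, so for cases (2) and (3) the induction cannot simply stay inside the class for an arbitrary choice of $v$, and indeed the published proof of case (2) in \cite{Kummini} does not run this vertex-splitting induction at all but works through the directed-graph/poset structure attached to an unmixed bipartite graph. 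So the proposal is a correct template for cases (1) and (4), an incorrectly justified template for case (3), and the wrong route for case (2); in every case the existence statement that makes the induction close still has to be proved, not asserted.
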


\begin{remark} Chordal and sequentially Cohen-Macaulay bipartite graphs are vertex decomposable. Also, bipartite graphs are $C_5$-free. Therefore, in Theorem \ref{thm.computeimn}, 1. follows from 4.
\end{remark}

\begin{example} \label{ex.C5}
Let $G$ be the induced $5$-cycle. Then $G$ is not $C_5$-free nor very well-covered. Its independence complex is also an induced $5$-cycle, which is vertex decomposable. Thus, the class of vertex decomposable graphs is not fully covered by Theorem \ref{thm.computeimn}.

Moreover, the minimal free resolution of $I = I(G)$ is given by
$$0 \rightarrow R(-5) \rightarrow R(-3)^5 \rightarrow R(-2)^5 \rightarrow I \rightarrow 0.$$
Therefore, $\reg(I) = 3$. On the other hand, the induced matching number of $G$ is $\nu(G) = 1$. Observe that $\reg(I(G)) \not= \nu(G) + 1$ in this example.
\end{example}

The problem is again more subtle when moving to hypergraphs. To do so, we need to identify classes of hypergraphs where inductive inequalities in Section \ref{sec.induction} become equalities, and to find combinatorial invariants of hypergraphs that respect induction processes.

It is proved by the author and Woodroofe in \cite[Theorem 1.5]{HW} that the inductive inequalities of Theorem \ref{thm.ind11} are equalities for all vertex decomposable simplicial complexes and hypergraphs.

\begin{theorem} \label{thm.vd}
Let $H$ be a simple hypergraph with edge ideal $I = I(H)$. Suppose that $H$ is vertex decomposable and $x$ is the initial vertex in its shedding order. Then
$$\reg(I) = \max\{\reg(I:x) + 1, \reg(I,x)\}.$$
\end{theorem}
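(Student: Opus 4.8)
The plan is to combine the general inductive upper bound from Theorem~\ref{thm.ind11} with the refinement in Remark~\ref{rem.1of2}, and then to supply the matching lower bound using Lemma~\ref{lem:RegShrinksInSubhypergraphs} together with the structural hypotheses built into vertex decomposability. Taking $h = x$ in Remark~\ref{rem.1of2}, since $x$ is a variable appearing in $I = I(H)$, we already know that $\reg(I)$ equals \emph{either} $\reg(I:x)+1$ \emph{or} $\reg(I,x)$. In particular $\reg(I) \le \max\{\reg(I:x)+1,\reg(I,x)\}$, so the content of the theorem is to prove that this maximum is actually achieved by $\reg(I)$, i.e.\ that both candidate values are dominated by $\reg(I)$.

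First I would translate the two ideals $I:x$ and $I+x$ into operations on the independence complex $\Delta = \Delta(H)$. Passing to the Stanley--Reisner side via Lemma~\ref{lem.IndComp}, the colon $I:x$ corresponds to $\link_\Delta(x)$ and the sum $I+x$ corresponds to $\del_\Delta(x)$ (this is the hypergraph analogue of the remark preceding Theorem~\ref{thm.ind11}). Concretely, $I:x = I(H:\{x\})$ and $I,x = I(H+\{x\})$, and on independence complexes these are exactly $\link_\Delta(x)$ and $\del_\Delta(x)$. Because $x$ is a shedding vertex, by definition $\link_\Delta(x)$ and $\del_\Delta(x)$ are themselves vertex decomposable, so the hypotheses propagate and one may set up an induction on the number of vertices of $H$.

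The lower bound is where the shedding hypothesis does the real work. For the link, Lemma~\ref{lem:RegShrinksInSubhypergraphs}(2) gives immediately that $\reg(\Delta) \ge \reg(\link_\Delta(x))$; since the relevant degree shift is by the single vertex $x$, this yields $\reg(I) \ge \reg(I:x)+1$ once one tracks the $(-1)$ shift coming from the colon by a single variable (equivalently, from the $-\sigma$ grading in the local cohomology formula of Lemma~\ref{lem:RegFromTopology}(3) with $\sigma = \{x\}$). For the deletion, the key is the shedding condition that every facet of $\del_\Delta(x)$ is a facet of $\Delta$: this forces $\del_\Delta(x)$ to be an induced subcomplex of $\Delta$ in the sense needed, so that any nonvanishing reduced homology group detected on $\del_\Delta(x)$ via Lemma~\ref{lem:RegFromTopology}(2) is also detected on the corresponding induced subcomplex of $\Delta$, giving $\reg(I) \ge \reg(I,x)$. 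Combining the two lower bounds gives $\reg(I) \ge \max\{\reg(I:x)+1,\reg(I,x)\}$, and together with the upper bound from Remark~\ref{rem.1of2} the desired equality follows.

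The main obstacle I anticipate is the deletion inequality $\reg(I)\ge\reg(I,x)$: unlike the link, a deletion is not automatically handled by Lemma~\ref{lem:RegShrinksInSubhypergraphs}, and it is precisely here that the condition ``all facets of $\del_\Delta(x)$ are facets of $\Delta$'' must be used rather than bare vertex-deletion. The careful point is to verify, via the Hochster-type characterization in Lemma~\ref{lem:RegFromTopology}, that a subset $S$ of vertices realizing the regularity of $\del_\Delta(x)$ can be promoted to a subset realizing at least the same regularity in $\Delta$; the shedding hypothesis guarantees no higher-dimensional face of $\Delta$ interferes with the homology witnessed on $\del_\Delta(x)$. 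Once this is established, the induction closes cleanly, and the base case (a simplex, i.e.\ $H$ having no edges, where $\reg$ is trivial) is immediate.
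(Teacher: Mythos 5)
Your reduction of the theorem to the two lower bounds $\reg(I)\ge \reg(I:x)+1$ and $\reg(I)\ge \reg(I,x)$ is the right framing (the upper bound via Remark~\ref{rem.1of2} is fine, and the paper itself only cites \cite[Theorem 1.5]{HW} rather than proving the statement), but you have located the difficulty in the wrong place, and the step you call ``immediate'' is exactly the one that fails without the shedding hypothesis. The deletion inequality $\reg(I)\ge\reg(I,x)$ is the automatic one: $\del_\Delta(x)$ \emph{is} the induced subcomplex $\Delta[X\setminus\{x\}]$ for any vertex $x$ whatsoever (no shedding condition is needed to make it ``induced''), so Lemma~\ref{lem:RegShrinksInSubhypergraphs}(1), or Lemma~\ref{lem:RegFromTopology}(2) directly, gives $\reg(I)\ge\reg(I,x)$ with no further argument. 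By contrast, Lemma~\ref{lem:RegShrinksInSubhypergraphs}(2) gives only $\reg(\Delta)\ge\reg(\link_\Delta(x))$, i.e.\ $\reg(I)\ge\reg(I:x)$ \emph{without} the $+1$: a witness $\tilde H_{d-1}(\link_{\link_\Delta(x)}(\tau))\neq 0$ translates to $\tilde H_{d-1}(\link_\Delta(\tau\cup\{x\}))\neq 0$, which certifies $\reg(R/I)\ge d$, not $d+1$. There is no free degree shift to be ``tracked'': in the $\ZZ^n$-graded local cohomology formula the contribution sits in degree $-\sigma$ with cohomological index raised by $|\sigma|$, and these cancel in the regularity. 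Indeed, the paper's own Example~\ref{ex.smaller} exhibits an ideal with $\reg(I:a)+1=4>3=\reg(I)$, so the inequality $\reg(I)\ge\reg(I:x)+1$ is genuinely false for an arbitrary vertex appearing in $I$; any proof of it must use that $x$ is a shedding vertex, and your argument never does.

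Concretely, the missing content is this. Writing $\Delta[S\cup\{x\}]$ as the union of $\Delta[S]$ and the star of $x$ in $\Delta[S\cup\{x\}]$, whose intersection is $(\link_\Delta x)[S]$, Mayer--Vietoris gives an exact sequence
\begin{equation*}
\tilde H_d(\Delta[S\cup\{x\}]) \longrightarrow \tilde H_{d-1}\bigl((\link_\Delta x)[S]\bigr) \longrightarrow \tilde H_{d-1}(\Delta[S]),
\end{equation*}
so a nonvanishing class in $\tilde H_{d-1}((\link_\Delta x)[S])$ either lifts to $\tilde H_d(\Delta[S\cup\{x\}])$ (which is what you want, yielding $\reg(R/I)\ge d+1$) or maps to a nonzero class in $\tilde H_{d-1}(\Delta[S])$ (which only yields $\reg(R/I)\ge d$). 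The role of the shedding condition --- every facet of $\del_\Delta(x)$ is a facet of $\Delta$, together with vertex decomposability of $\link_\Delta(x)$ and $\del_\Delta(x)$ and an induction along the shedding order --- is precisely to control this connecting map (or to produce a better witness $S$), and this is the substance of the proof in \cite{HW}. Your paragraph about the deletion, where you invoke the facet condition to make $\del_\Delta(x)$ ``induced,'' does not engage with this: that inequality needs no hypothesis at all, while the link inequality, which you dismiss as bookkeeping, is where the theorem actually lives.
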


The following example arises from communication with Chris Francisco. The author would like to thank Chris Francisco for his help.

\begin{example} \label{ex.smaller}
Let $I = (acd, bcd, abe, bce, bcf, cdf) \subseteq R = K[a,\dots, f]$. Then $\reg(I) = 3$, $\reg(I,a) = 3 = \reg(I)$ and $\reg(I:a)+1 = 4 > \reg(I)$. Thus, in this example, $\reg(I)$ is equal to the smaller value between $\reg(I,a)$ and $\reg(I:a)+1$.
\end{example}

Example \ref{ex.C5} shows that even for vertex decomposable graphs, the induced matching number is no longer the right invariant to compute the regularity (i.e., the regularity may be strictly bigger than the induced matching number). On the other hand, for a smaller class of vertex decomposable hypergraphs, those that are triangulated, it was proved in \cite{HVT2008} that this invariant still works.

\emph{Triangulated} hypergraphs were introduced in \cite{HVT2008} as a generalization of chordal graphs. Recall that the \emph{ distance} between 2 edges $E$ and $F$ in a $d$-uniform hypergraph $H$ is the minimum length of a sequence of edges $E_0 = E, E_1, \dots, E_s = F$ such that $|E_{i-1} \cap E_i| = d-1$. A $d$-uniform hypergraph $H$ is called \emph{properly connected} if for any intersecting edges $E$ and $F$, the distance between $E$ and $F$ is exactly $d- |E \cap F|$. Note that all simple graph are properly connected. Recall also that for a vertex $x$, the set of \emph{neighbors} of $x$ is given by
$$N(x) = \{y \in X ~|~ \exists E \in \E \textrm{ s.t. } \{x,y\} \subseteq E\}.$$

\begin{definition}
A $d$-uniform properly connected hypergraph $H = (X,\E)$ is said to be \emph{triangulated} if for every nonempty subset $Y \subseteq X$, the induced subgraph $H[Y]$ contains a vertex $x \in Y \subseteq X$ such that the induced hypergraph of $H[Y]$ on $N(x)$ is a complete hypergraph consisting of all subsets of $d$ elements of $N(x)$.
\end{definition}

Note that triangulated hypergraphs are vertex decomposable. The following theorem was proved in \cite[Theorem 6.8]{HVT2008}.

\begin{theorem} \label{thm.triangulated}
Let $H$ be a $d$-uniform properly connected hypergraph, and assume that $H$ is triangulated. Let $\nu(H)$ denote the induced matching number of $H$. Then
$$\reg(H) = (d-1)\nu(H) + 1.$$
\end{theorem}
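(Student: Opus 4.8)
The plan is to combine the lower bound from Theorem~\ref{thm.inducedmatching} with a matching upper bound obtained by induction via Theorem~\ref{thm.ind3}. The lower bound $\reg(H) \ge (d-1)\nu(H)+1$ is immediate: since $H$ is $d$-uniform, any induced matching $\{E_1,\dots,E_s\}$ satisfies $\sum_{i=1}^s(|E_i|-1) = (d-1)s$, so Theorem~\ref{thm.inducedmatching} gives $\reg(H) \ge (d-1)\nu(H)+1$. Hence the entire content is the upper bound $\reg(H) \le (d-1)\nu(H)+1$.

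For the upper bound I would argue by induction on the number of edges of $H$, mimicking the proof of Theorem~\ref{thm.regchordal}. The base cases (no edges, or a single edge) are trivial. The key is to establish a hypergraph analogue of Lemma~\ref{lem.splittingedge}: using the triangulated structure, one selects a vertex $x$ whose neighborhood induces a complete $d$-uniform hypergraph, and then picks a \emph{splitting edge} $E$ containing $x$ so that the inductive inequality of Theorem~\ref{thm.ind3} becomes an equality,
\begin{align}
\reg(H) = \max\{\reg(H_E) + (d-1),\ \reg(H\setminus E)\}, \notag
\end{align}
and moreover both $H\setminus E$ and $H_E$ remain $d$-uniform, properly connected, and triangulated. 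Granting this, I would apply the induction hypothesis to $H\setminus E$ and $H_E$ to reduce the problem to the purely combinatorial identity
\begin{align}
\nu(H) = \max\{\nu(H_E)+1,\ \nu(H\setminus E)\}. \notag
\end{align}
This identity is proved exactly as in Theorem~\ref{thm.regchordal}: an induced matching avoiding $E$ is an induced matching of $H\setminus E$, an induced matching of $H_E$ adjoined with $E$ is an induced matching of $H$ (since $H_E$ lives on $X\setminus N[E]$, so nothing interferes with $E$), giving ``$\ge$''; conversely any maximum induced matching of $H$ either omits $E$ (and hence sits in $H\setminus E$) or contains $E$ (and the rest sits in $H_E$), giving ``$\le$''. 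Combining the two displays yields $\reg(H) = (d-1)\nu(H)+1$.

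The main obstacle is establishing the splitting-edge lemma in the uniform-hypergraph setting, i.e.\ showing that triangulatedness guarantees an edge $E$ for which Theorem~\ref{thm.ind3} is tight \emph{and} under which the triangulated/properly-connected structure is inherited by both $H\setminus E$ and the contraction $H_E$. This is where the hypergraph geometry is genuinely harder than the graph case: one must verify that equality holds in the inequality $\reg(H)\le\max\{d,\reg(H\setminus E),\reg(H_E)+d-1\}$ (in particular that the term $d$ never dominates, which uses the lower bound $\reg(H)\ge d$ coming from the single edge $E$), and that the contraction operation---rather than the simpler induced subgraph used for graphs---preserves proper connectedness and the triangulated defining property. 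I would appeal to \cite{HVT2008} for this structural lemma, since its proof requires the detailed combinatorics of properly connected triangulated hypergraphs developed there; the remaining inductive bookkeeping then follows the chordal-graph template verbatim.
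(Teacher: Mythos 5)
The survey states Theorem \ref{thm.triangulated} without proof, simply citing \cite{HVT2008}, so the only template it offers is the chordal-graph argument for Theorem \ref{thm.regchordal}; your skeleton (lower bound from Theorem \ref{thm.inducedmatching}, plus a splitting-edge induction via Theorem \ref{thm.ind3} reducing to a recursion for $\nu$) is indeed the strategy of the cited source, and deferring the existence of a splitting edge that preserves the properly connected triangulated structure to \cite{HVT2008} is legitimate. The lower bound half of your argument is correct as written.

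The gap is in the step you claim is proved ``exactly as in Theorem \ref{thm.regchordal},'' namely $\nu(H)=\max\{\nu(H_E)+1,\ \nu(H\setminus E)\}$. For graphs this works because $H_E$ is the induced subgraph on $X\setminus N[E]$ and any edge contained in $E\cup G$ other than $E$ and $G$ must join a vertex of $E$ to a vertex of $G$, forcing a vertex of $G$ into $N(E)$. For $d$-uniform hypergraphs with $d\ge 3$ neither fact survives: $H_E$ is a \emph{contraction}, so its edges need not be edges of $H$ (and it need not be $d$-uniform), and $N(E)$ only records edges having all but one vertex inside $E$, so an edge meeting $E$ in a single vertex can be contained in $E\cup G$ while touching no vertex of $N(E)$. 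Concretely, take the $3$-uniform hypergraph with edges $E=\{a,b,c\}$, $F=\{c,d,e\}$, $G=\{d,e,f\}$: no vertex outside $E$ lies in $N(E)$, the edge $G$ lives entirely in $X\setminus N[E]$ and gives an induced matching of size $1$ in $H_E$, yet $\{E,G\}$ is \emph{not} an induced matching of $H$ because $F\subseteq E\cup G$; thus $\nu(H_E)+1=2>1=\nu(H)$ and your justification for the ``$\ge$'' direction (``nothing interferes with $E$'') fails. This hypergraph is of course not properly connected, which is exactly the point: the properly connected and triangulated hypotheses are needed not only for the splitting-edge lemma you flagged, but also to show that for the chosen edge $E$ the contraction $H_E$ consists of genuine edges of $H$ and that no edge of $H$ obstructs extending an induced matching of $H_E$ by $E$. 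That verification is part of the substance to be extracted from \cite{HVT2008}, not bookkeeping that follows the graph template verbatim.
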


Using their notion of labeled hypergraphs, Lin and McCullough \cite[Proposition 4.1 and Theorem 4.12]{LM} also compute the regularity explicitly for special classes of hypergraphs. We shall again rephrase their results in terms of edge ideals of hypergraphs.
A hypergraph $H$ is called \emph{saturated} if every edge in $H$ contains free vertices.

\begin{theorem} \label{thm.LMsaturated}
Let $H = (X, \E)$ be a simple hypergraph with edge ideal $I = I(H)$. The following are equivalent:
\begin{enumerate}
\item $H$ is saturated; and
\item The Taylor resolution of $I$ is minimal.
\end{enumerate}
In this case, we have
$$\reg(I) = |X| - |\E| + 1.$$
\end{theorem}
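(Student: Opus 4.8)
The plan is to prove the equivalence of the two conditions first, and then derive the regularity formula. Recall that a hypergraph $H$ is \emph{saturated} if every edge contains a free vertex, i.e.\ a vertex not belonging to any other edge. The Taylor resolution of $I = I(H)$ with minimal generators $m_1, \dots, m_q$ (where $q = |\E|$) is built on the faces of a $(q-1)$-simplex, with the generator in homological degree $\sigma \subseteq \{1,\dots,q\}$ having degree equal to $\operatorname{lcm}(m_i \mid i \in \sigma)$. This resolution is minimal precisely when no two distinct faces $\sigma \subset \tau$ with $|\tau| = |\sigma|+1$ carry the same lcm, equivalently when for every pair $\sigma \subsetneq \tau$ one has $\operatorname{lcm}(m_i \mid i \in \sigma) \neq \operatorname{lcm}(m_i \mid i \in \tau)$.

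For the implication \textbf{(1)} $\Rightarrow$ \textbf{(2)}, I would argue that if every edge $E_j$ has a free vertex $v_j$, then the variable $v_j$ divides $m_j$ but divides no other generator $m_i$; hence adding index $j$ to any subset $\sigma \not\ni j$ strictly increases the lcm (the exponent of $v_j$ jumps from $0$ to $1$). This shows every upward step in the Taylor complex strictly raises the multidegree, so the differentials have no units and the resolution is minimal. For the converse \textbf{(2)} $\Rightarrow$ \textbf{(1)}, I would prove the contrapositive: suppose some edge $E_j$ has no free vertex. Then every vertex $v \in E_j$ lies in some other edge, so for each $v \in E_j$ there is an index $i(v) \neq j$ with $v \mid m_{i(v)}$. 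Taking $\sigma = \{\, i(v) \mid v \in E_j \,\}$, one checks that $m_j \mid \operatorname{lcm}(m_i \mid i \in \sigma)$, so that $\operatorname{lcm}(m_i \mid i \in \sigma \cup \{j\}) = \operatorname{lcm}(m_i \mid i \in \sigma)$; the two comparable faces $\sigma$ and $\sigma \cup \{j\}$ share an lcm, forcing a unit entry in the Taylor differential and violating minimality.

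Once the equivalence is in hand, the regularity formula follows by reading off the minimal resolution directly. When the Taylor resolution is minimal, its length is $q = |\E|$, and the top homological term sits in homological degree $|\E|$ with internal degree $\deg\big(\operatorname{lcm}(m_1,\dots,m_q)\big)$. Since the $m_i$ are squarefree and each edge contributes at least one free vertex, the total lcm is the product $\prod_{x \in X'} x$ over the variables $X'$ actually appearing, and one can show this degree equals $|X|$ after accounting for all appearing variables. Using the Betti-number characterization $\reg(I) = \max\{ j - i \mid \beta_{ij}(I) \neq 0\}$ from Definition \ref{def-prop}, the regularity is attained at the top step, giving $\reg(I) = \deg\big(\operatorname{lcm}(m_1,\dots,m_q)\big) - (|\E| - 1) = |X| - |\E| + 1$, where the $-1$ shift accounts for indexing the resolution of $I$ rather than $R/I$.

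The main obstacle I anticipate is the careful bookkeeping in the degree computation for the formula: one must verify that when $H$ is saturated, the internal degree of the unique top Betti number is exactly $|X|$ (assuming, as is implicit, that $H$ has no isolated vertices, or else replacing $|X|$ by the number of non-isolated vertices). The subtle point is that a saturated hypergraph could in principle have a vertex not covered by the free-vertex argument, so I would need to confirm that every vertex appearing in some edge is captured by $\operatorname{lcm}(m_1,\dots,m_q)$ and that the count of variables matches $|X|$ under the standing convention. The equivalence of the two conditions, by contrast, is a clean combinatorial statement about when lcm's collapse along edges of the Taylor simplex, and I expect it to go through without difficulty.
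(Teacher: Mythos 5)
Your argument is correct. Note first that the survey itself gives no proof of this statement: it is quoted from Lin--McCullough \cite[Proposition 4.1 and Theorem 4.12]{LM}, where it is established in the language of labeled hypergraphs (the dual hypergraph encodes, for each vertex $x$, the set $W_x$ of edges through $x$, and saturation becomes the condition that every edge of the labeled hypergraph carries a nonempty label). Your proof works directly with the Taylor complex instead, and it is the natural self-contained route: a free vertex $v_j$ of $E_j$ is a variable dividing $m_j$ and no other generator, so $\lcm(m_i \mid i\in\sigma)$ strictly increases whenever $j$ is adjoined to $\sigma\not\ni j$, which kills all unit entries of the differential; conversely, an edge with no free vertex yields a face $\sigma$ with $m_j \mid \lcm(m_i \mid i\in\sigma)$ and hence a collapsing covering pair. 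Both implications are sound. For the regularity formula, you should make explicit the one-line monotonicity fact that your own (1)$\Rightarrow$(2) argument already supplies: since each upward step raises $\deg\lcm$ by at least $1$, the quantity $\deg\lcm(\sigma)-|\sigma|$ is nondecreasing along inclusions, so $\max\{j-i \mid \beta_{ij}(I)\neq 0\}$ is indeed attained at the top face, giving $\reg(I)=\deg\lcm(m_1,\dots,m_q)-|\E|+1$. Your caveat about isolated vertices is well taken and matches the implicit convention here (every vertex of $X$ lies in some edge, so the total lcm has degree $|X|$); with that standing assumption your proof is complete, and arguably more transparent than the cited one.
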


For a vertex $x$ in a hypergraph $H = (X, \E)$, let $W_x = \{E \in \E ~|~ x \in E\}$. Denote by $S_E$ the set $\{W_x ~|~ x \in E\}$, and let $\P = \bigcup_{E \in \E} S_E$. An element $W_x$ in $\P$ is called \emph{simple} if it does not contain any other element of $\P$ as a proper subset. The hypergraph $H$ is said to have \emph{isolated simple edges} if for any edge $E$ in $H$ not containing free vertices, the set $S_E$ has exactly one simple maximal element.

\begin{theorem} \label{thm.LMsimpleedges}
Let $H$ be a simple hypergraph with edge ideal $I = I(H)$. Suppose that $H$ has isolated simple edges. Then
$$\reg(I) = |X| - |V| + \sum_{W \in \P, \ W \textrm{ simple}} (|W|-1) + 1.$$
\end{theorem}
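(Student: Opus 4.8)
The plan is to prove the identity by induction on the number of edges, in the same spirit as the proof of Theorem~\ref{thm.regchordal} for chordal graphs: one uses the edge-splitting bound of Theorem~\ref{thm.ind3} and treats the saturated case (Theorem~\ref{thm.LMsaturated}) as the base of the induction. Write $f(H)$ for the right-hand side of the asserted formula; the goal is to show $\reg(I(H)) = f(H)$ whenever $H$ has isolated simple edges. When every edge of $H$ contains a free vertex, $H$ is saturated, and one checks that every simple element of $\mathcal P$ is then a singleton $\{E\}$, so the correction term $\sum_{W\text{ simple}}(|W|-1)$ vanishes and $f(H)=|X|-|\mathcal E|+1$; this is exactly Theorem~\ref{thm.LMsaturated}, which supplies the base case.

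For the inductive step I would single out an edge $E$ that contains no free vertex (if there is none, $H$ is already saturated). The hypothesis that $S_E$ has a unique simple maximal element distinguishes a vertex $x_0\in E$ with minimal edge-set $W_{x_0}$, and the strategy is to use $E$ as a \emph{splitting edge} in Theorem~\ref{thm.ind3}, i.e. to prove the analogue of Lemma~\ref{lem.splittingedge}: that for this choice
$$\reg(I(H)) = \max\{\reg(I(H_E)) + |E| - 1,\ \reg(I(H\setminus E))\},$$
and that both $H\setminus E$ and $H_E$ again have isolated simple edges. Granting this, induction applies to the two strictly smaller hypergraphs, and it remains to verify the purely combinatorial recursion
$$f(H) = \max\{f(H_E) + |E| - 1,\ f(H\setminus E)\}.$$
This reduces the theorem to tracking how $|X|$, $|\mathcal E|$, and the family of simple members of $\mathcal P$ transform under deletion of $E$ and under the contraction defining $H_E$; the distinguished vertex $x_0$ and its simple edge-set $W_{x_0}$ are precisely what control the change in the correction term.

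The hard part will be establishing the splitting lemma, i.e. upgrading the inequality of Theorem~\ref{thm.ind3} to an equality. The inequality is free from (\ref{eq.splitedge}), but equality fails badly in general: Example~\ref{ex.matching} and the triangle $K_3$ show that homological cancellations make $\reg$ strictly smaller than $f$ once an edge without a free vertex has \emph{several} competing minimal edge-sets in its $S_E$. The ``isolated simple edge'' condition is designed exactly to forbid this ambiguity, and the technical heart of the argument is to show that the unique simple maximal element of $S_E$ guarantees that no such cancellation occurs at the top of the long exact sequence of local cohomology coming from (\ref{eq.ses})--(\ref{eq.splitedge}); concretely, one must show the contribution $\reg(I(H_E)) + |E| - 1$ is genuinely realized and not absorbed by $\reg(I(H\setminus E))$.

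A secondary obstacle is the heredity of the hypothesis: deleting $E$ or contracting to form $H_E$ can turn formerly non-free vertices into free ones and can merge edges, so one must confirm that every edge of $H\setminus E$ and of $H_E$ still lacking a free vertex continues to have a unique simple maximal element, and that the induced correspondence between the simple members of $\mathcal P(H)$ and those of the reduced hypergraphs shifts the bookkeeping $\sum(|W|-1)$ by exactly the amount dictated by the displayed recursion for $f$. As an alternative to this direct induction, one could instead pass through Terai duality (Theorem~\ref{thm.Terai}) and compute $\reg(I)=\pd(R/I^\vee)$ from the labeled/dual hypergraph by reading off the top nonvanishing Betti number; the isolated-simple-edge condition should then be what forces the relevant face of the supporting resolution to survive, giving the projective dimension directly.
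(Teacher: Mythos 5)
There is no in-paper proof to compare against here: the survey quotes this result from Lin--McCullough \cite[Theorem 4.12]{LM} without proof (their argument is carried out in the dual, labeled-hypergraph picture, which you mention only as an afterthought). So your proposal has to stand on its own, and as written it is a program rather than a proof: each of its three pillars --- (a) the splitting \emph{equality} $\reg(I(H)) = \max\{\reg(I(H_E)) + |E| - 1,\ \reg(I(H\setminus E))\}$ for a suitably chosen $E$, (b) heredity of the isolated-simple-edges condition under $H \mapsto H\setminus E$ and $H\mapsto H_E$, and (c) the combinatorial recursion $f(H) = \max\{f(H_E)+|E|-1,\ f(H\setminus E)\}$ --- is announced as something ``one must show,'' and none is carried out. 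The base case is fine (when $H$ is saturated every simple $W\in\P$ is a singleton, so the correction term vanishes and Theorem \ref{thm.LMsaturated} applies), provided one reads $|X|$ as the number of non-isolated vertices and $|V|$ as $|\E|$.

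The concrete failure is pillar (b) for the edge you single out, namely ``any edge containing no free vertex.'' Take the paper's own Example \ref{ex.ExampleLH}, $I=(ab,\, bcdef,\, ac,\, eg,\, fg,\, gh,\, hi)$, and let $E=ab$, which has no free vertex. Then $N[E]=\{a,b,c\}$ and $H_E$ is the graph with edges $eg, fg, gh, hi$. In $H_E$ the edge $gh$ has no free vertex, and $S_{gh}=\{W_g, W_h\}$ with $W_g=\{eg,fg,gh\}\supsetneq W_e=\{eg\}$ and $W_h=\{gh,hi\}\supsetneq W_i=\{hi\}$, so \emph{neither} element of $S_{gh}$ is simple: $S_{gh}$ has no simple maximal element, the isolated-simple-edges hypothesis is destroyed, and the inductive hypothesis cannot be applied to $H_E$ (indeed $\reg(I(H_E))=2$ while the formula would give $3$ before reducing isolated vertices). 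For the same example the choice $E=eg$ does preserve the hypothesis, which shows that the real content of your approach is an existence lemma for a good splitting edge, analogous to Lemma \ref{lem.splittingedge} for chordal graphs; you neither formulate nor prove such a lemma, nor do you prove the splitting equality (a) or the recursion (c). Until those three statements are made precise and established, this is a plausible plan, not a proof.
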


\begin{figure}[h!]
\centering
\includegraphics[height=1.5in]{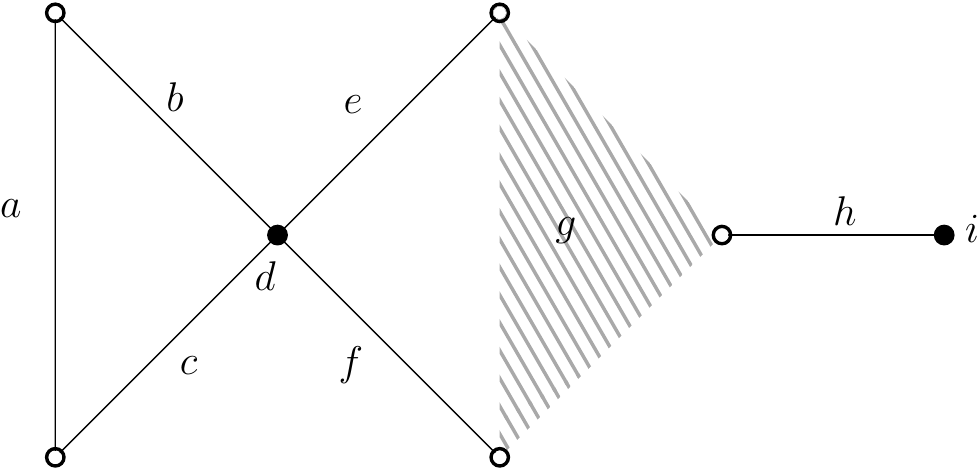}
\caption{A labeled hypergraph having isolated simple edges.}\label{fig:ExampleLH}
\end{figure}

\begin{example} \label{ex.ExampleLH}
Let $I = (ab, bcdef, ac, eg, fg, gh, hi) \subseteq R = K[a,b,\dots, i]$ be the edge ideal of $H$. Then the set $\P$ of $H$ is depicted in Figure \ref{fig:ExampleLH}, where filled points represent edges in $H$ with free vertices and unfilled points represent edges in $H$ without free vertices. Note that, as a hypergraph, this figure represents the dual hypergraph of $H$, and also the labeled hypergraph corresponding to $I$ in the sense of \cite{LM}. It can be seen that $H$ has isolated simple edges, and those simple sets $W_x$s in $\P$ are labeled by $a$ and $g$. Theorem \ref{thm.LMsimpleedges} gives
$$\reg(I) = 9 - 7 + (2-1) + (3-1) + 1 = 6.$$
\end{example}

\begin{remark} It can be seen that the ideal $I$ in Example \ref{ex.ExampleLH} is not sequentially Cohen-Macaulay. Thus, its associated simplicial complex $\Delta(I)$ is not vertex decomposable. This example shows that inductive inequalities in Theorems \ref{thm.ind11} and \ref{thm.vd} may become equalities for a larger class of squarefree monomial ideals than those corresponding to vertex decomposable simplicial complexes. 
\end{remark}


\section{Open problems and questions} \label{sec.prob}

In this last section of the paper, we state a number of open problems and conjectures that we would like to see answered. We first observe that the combinatorial bounds in Theorems \ref{thm.2collage}, \ref{thm.edgedominant} and \ref{thm.labelhypergraph} in practice are not easy to compute. It is desirable to find bounds that relate to more familiar combinatorial invariants of hypergraphs and simplicial complexes.

\begin{problem} \label{prob.bound}
Find bounds for the regularity of a squarefree monomial ideal in terms of familiar combinatorial invariants and structures of associated hypergraph and simplicial complex.
\end{problem}

Let $\chi(H)$ denote the \emph{chromatic number} of a hypergraph $H$, i.e., the least number of colors needed to color the vertices of $H$ such that no edge (that is not an isolated loop) in $H$ is mono-colored. It was observed in \cite[Theorem 2]{Russ} that for any independent set $T$ in $G$,
\begin{align}
\reg(R/I(G)) \le \chi((G \setminus T)^c). \label{eq.chromatic}
\end{align}

\begin{problem} \label{prob.chromatic}
Extend (\ref{eq.chromatic}) to hypergraphs. Investigate the relation between the regularity of a hypergraph and its coloring properties.
\end{problem}

As pointed out in Example \ref{ex.C4}, the class of graphs with regularity $\le 3$ is strictly larger than that of claw-free graphs whose complements contain no induced $C_4$. We would like to see this class of graphs classified.

\begin{problem} Give a combinatorial characterization for squarefree monomial ideals $I$ such that $\reg(I) = 3$. In particular, classify simple graphs $G$ such that $\reg(I(G)) = 3$.
\end{problem}

It was shown in Theorem \ref{thm.vd} that for vertex decomposable simplicial complexes (and hypergraphs) the inductive inequalities in Section \ref{sec.induction} become equalities. It is natural to seek for combinatorial invariants that measure the regularity of these complexes (and hypergraphs).

\begin{problem} Let $H$ be a vertex decomposable hypergraph. Compute $\reg(H)$.
\end{problem}

Examples \ref{ex.smaller} and \ref{ex.ExampleLH} show that inductive inequalities in Theorem \ref{thm.ind11} may become equalities for a larger class of simplicial complexes (and hypergraphs) than vertex decomposable ones. In particular, we would like to classify squarefree monomial ideals for which the conclusion of Theorem \ref{thm.vd} holds.

\begin{problem} Characterize squarefree monomial ideals $I$ for which there exists a variable $x$ such that
$$\reg(I) = \max\{\reg(I:x) + 1, \reg(I,x)\}.$$
\end{problem}

Recall that the inductive bound of Theorem \ref{thm.ind3} comes from the following short exact sequence
\begin{align}
0 \longrightarrow \frac{R}{(x^E) \cap I(H \setminus E)} \longrightarrow \frac{R}{(x^E)} \oplus \frac{R}{I(H \setminus E)} \longrightarrow \frac{R}{I(H)} \longrightarrow 0. \label{eq.sequence}
\end{align}
The bound in Theorem \ref{thm.ind3} would become an equality if the mapping cone construction applied to (\ref{eq.sequence}) results in the \emph{minimal} free resolution of $R/I(H)$. We would like to be able to identify all such edges $E$ for which this is true.

\begin{problem} \label{prob.mapping}
Characterize all edges $E$ in a simple hypergraph $H$ such that the mapping cone construction applied to the short exact sequence (\ref{eq.sequence}) gives the minimal free resolution of $R/I(H)$.
\end{problem}

\cite[Theorem 3.2]{HVT2008} partially answered this problem. Problem \ref{prob.mapping}, in fact, comes from a larger picture, when a squarefree monomial ideal $I$ is splitted into the sum of two sub-ideals. We would also like to see in general which combinatorial structures would enforce the minimality of the mapping cone construction applied to a similar short exact sequence.

\begin{problem} Let $H$ and $H'$ be simple hypergraphs on the same vertex set $X$ with disjoint edge sets $\E(H)$ and $\E(H')$. Let $I = I(H)$ and $J = I(H')$. Find combinatorial conditions for the mapping cone construction applied to the short exact sequence
$$0 \longrightarrow \frac{R}{I \cap J} \longrightarrow \frac{R}{I} \oplus \frac{R}{J} \longrightarrow \frac{R}{I+J} \longrightarrow 0$$
to give the minimal free resolution of $R/I+J$.
\end{problem}

Along the same line, it is also desirable to see when the inductive inequalities in Theorem \ref{thm.ind2} and Corollary \ref{cor.ind2} become equalities.

\begin{problem} Let $H$, $H_1, \dots, H_s$ be simple hypergraphs over the same vertex set $X$, and assume that $\E(H) = \bigcup_{i=1}^s \E(H_i)$. Find combinatorial conditions for the following equality to hold:
$$\reg(R/I(H)) = \sum_{i=1}^s \reg(R/I(H_i)).$$
\end{problem}

An important problem in commutative algebra during the last few decades is to study the regularity of powers of ideals. It is known (cf. \cite{BCH, CHT, Ko, TW}) that for a homogeneous ideal $I \subseteq R$, $\reg(I^q)$ is asymptotically a linear function, i.e., there exist constants $a,b$ and $q_0$ such that for $q \ge q_0$, $\reg(I^q) = aq+b$. While the coefficient $a$ is well-understood, the constants $b$ and $q_0$ are quite mysterious. Many recent studies are devoted to investigating these constants (cf. \cite{Be, Ch, EH, EU, Ha}). Even for edge ideals of graphs the exact values for these constants are still out of reach.

\begin{problem} Let $I$ be a squarefree monomial ideal. Find combinatorial interpretations of the linear function $\reg(I^q)$ for $q \gg 0$.
\end{problem}

For a very simple family of edge ideals, we would like to see constants $b$ and $q_0$ evaluated explicitly.

\begin{problem} Let $I = I(C_k)$ be the edge ideal of a $k$-cycle. Find the linear form of $\reg(I^q)$ for $q \gg 0$.
\end{problem}

Computational experiments lead us to conjecture that for this class of edge ideals, the regularity of $I^q$ becomes a linear function as soon as $q \ge 2$.

\begin{conjecture} Let $I = I(C_k)$ be the edge ideal of a $k$-cycle. Then $\reg(I^q)$ is a linear function for $q \ge 2$.
\end{conjecture}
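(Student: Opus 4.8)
The plan is to prove the sharper statement that $\reg(I(C_k)^q) = 2q + b$ for all $q \ge 2$, where $b$ is the constant occurring in the eventual linear form. Since $I = I(C_k)$ is generated in degree $2$, the asymptotic linearity results cited in the survey (cf. \cite{CHT, Ko, TW}) already give $\reg(I^q) = 2q + b$ for $q \gg 0$ with slope exactly $2$; the entire content is therefore to push the threshold down to $q = 2$ and to identify $b$ in terms of the induced matching number $\nu(C_k) = \lfloor k/3\rfloor$ and the residue of $k$ modulo $3$. I would begin by recording the low-power data, computing $\reg(I)$ and $\reg(I^2)$ directly (the former through Lemma \ref{lem:RegFromTopology} and the homology of the independence complex of a cycle, or through Theorem \ref{thm.regchordal} applied to its chordal pieces). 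It is precisely in this base-case analysis that the hypothesis $q \ge 2$ should reveal itself: one expects the value at $q = 1$ to lie off the eventual line in a single residue class modulo $3$, which is the phenomenon the conjecture's threshold is designed to avoid.

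For the lower bound I would show $\reg(I(C_k)^q) \ge 2q + b$ for every $q \ge 2$. The primary device is the induced-matching estimate of Theorem \ref{thm.inducedmatching}, upgraded to powers: taking a maximum induced matching $\{E_1,\dots,E_\nu\}$ of $C_k$ and a suitable product of $q-1$ of its edges produces a nonvanishing homology class in an induced subcomplex attached to $I^q$, giving $\reg(I^q) \ge 2q + \nu(C_k) - 1$. This is tight when $k \equiv 0, 1 \pmod 3$; when $k \equiv 2 \pmod 3$ one must supply an extra $+1$ coming from the cycle itself, by exactly the mechanism that forces $\reg(I(C_k)) = \nu(C_k) + 2$ in that case. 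Matching these localized computations against the eventual value then pins the lower bound to $2q + b$ for all $q \ge 2$.

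The upper bound is where the real work lies, and I expect it to be the main obstacle. Here I would use the inductive sequence (\ref{eq.ses})--(\ref{eq.simplereg}) in the form adapted to powers, letting $h$ range over the minimal monomial generators $m_\ell$ of $I^q$ (each a product of $q$ edges of $C_k$, of degree $2q$), which yields a filtration with
$$\reg(I^{q+1}) \le \max\Big\{\reg(I^q),\ \max_\ell\big(\reg(I^{q+1} : m_\ell) + 2q\big)\Big\}.$$
It therefore suffices to establish the uniform estimate $\reg(I^{q+1} : m_\ell) \le b + 2$ for every $\ell$, independently of $q$. The colon ideals $I^{q+1} : m_\ell$ are monomial ideals generated in degree $\le 2$, whose degree-$2$ parts are edge ideals of graphs obtained from $C_k$ by adjoining chords joining vertices that are even-connected through $m_\ell$ (with some vertices turning into isolated loops). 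The hard part is to bound the regularity of all these derived graphs simultaneously: one must check that even-connection chords can only create shorter cycles and chordal pieces whose induced matching numbers stay capped by $\nu(C_k)$, so that Theorem \ref{thm.regchordal}-type estimates keep every colon regularity at most $b+2$. Organizing this as an induction on $q$, the inductive step should close exactly once $q \ge 2$, because the even-connection pattern of $m_\ell$ stabilizes only after one power has already been split off.

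Finally, I would dispatch the base case $q = 2$ by hand, computing $\reg(I^2)$ through the colon description above with $q = 1$, where the even-connected graphs are completely explicit, and verifying it equals $4 + b$. Feeding this base value into the lower bound of the second paragraph and the inductive upper bound of the third then yields $\reg(I(C_k)^q) = 2q + b$ for all $q \ge 2$, which is the asserted linearity. The principal difficulty throughout is the uniform, $q$-independent control of the regularities of the even-connection colon ideals; the remainder is careful bookkeeping across the three residue classes of $k$ modulo $3$.
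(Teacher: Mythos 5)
The statement you are addressing is stated in the paper as an open \emph{conjecture}, supported only by computational experiments; the paper contains no proof, so your argument can only be judged on its own internal soundness. As a roadmap your proposal is sensible --- reducing to a uniform bound on colon ideals of $I^{q+1}$ by generators of $I^q$, and controlling those via graphs of even-connected vertices, is essentially the strategy that later resolved this conjecture. But as written it is a plan, not a proof: every load-bearing step is flagged rather than carried out. The assertion that $I^{q+1}:m_\ell$ is generated in degree $\le 2$ by even-connection chords is itself a nontrivial theorem you would need to prove; the filtration inequality you write down actually requires bounding the \emph{successive} colon ideals $\big((I^{q+1}, m_1, \dots, m_{\ell-1}) : m_\ell\big)$ rather than $I^{q+1}:m_\ell$ alone, together with a proof that these retain the same structure; and the uniform estimate $\reg(I^{q+1}:m_\ell) \le b+2$ --- which you correctly identify as the entire content of the upper bound --- is nowhere established. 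Note also that the derived graphs obtained by adjoining even-connection chords to $C_k$ are in general neither chordal nor complements of chordal graphs, so estimates of the type in Theorem \ref{thm.regchordal} do not apply to them off the shelf.

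There is also a concrete error in your lower bound. You claim that for $k \equiv 2 \pmod 3$ one must add an extra $+1$ to the induced-matching bound $\reg(I^q) \ge 2q + \nu(C_k) - 1$, ``by exactly the mechanism that forces $\reg(I(C_k)) = \nu(C_k)+2$.'' This contradicts your own (correct) remark in the first paragraph that it is the value at $q=1$ which lies off the eventual line: the now-known answer is $\reg(I(C_k)^q) = 2q + \lfloor k/3 \rfloor - 1$ for all $q \ge 2$, with no correction by the residue of $k$. For $k=5$ your claimed lower bound would give $\reg(I(C_5)^2) \ge 5$, whereas the actual value is $4$. The extra homology class you propose to extract from the cycle when $k \equiv 2 \pmod 3$ does not persist in higher powers, so any argument producing it for $q \ge 2$ would necessarily be flawed; repairing this requires recognizing that the anomalous $+1$ is precisely the $q=1$ phenomenon the conjecture's threshold excludes.
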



\end{document}